\newtheorem{defin}{{\bf Definition}}[section]
\newtheorem{theorem}[defin]{{\bf Theorem}}
\newtheorem{lemma}[defin]{{\bf Lemma}}
\newtheorem{proposition}[defin]{{\bf Proposition}}
\newtheorem{corollary}[defin]{{\bf Corollary}}
\newtheorem{example}[defin]{\noindent {\bf Example}}
\newtheorem{problem}[defin]{\noindent {\bf Problem}}
\newcommand{\CA}{{\mathcal A}}
\newcommand{\CB}{{\mathcal B}}
\newcommand{\CM}{{\mathcal M}}
\newcommand{\BQ}{{\mathbb Q}}
\newcommand{\BR}{{\mathbb R}}
\newcommand{\BZ}{{\mathbb Z}}
\newcommand{\CC}{2^{\aleph_0}}
\newcommand{\sib}{sib}
\def\powerset #1 { {\mathcal P} ( #1 ) }
\title{Equimorphy -- The Case of Chains}
\author[C. Laflamme, R.Woodrow] {Claude Laflamme*, R. Woodrow}
\thanks{*Supported by NSERC of Canada Grant \# 690404}
\address{University of Calgary, Department of Mathematics and Statistics, Calgary, Alberta, Canada T2N 1N4} 
\email {laflamme@ucalgary.ca} 
\email {woodrow@ucalgary.ca} 
\author[M.~Pouzet]{M.~Pouzet}
\address{ICJ, Math\'ematiques, Universit\'e Claude-Bernard Lyon 1 \\ 43 Bd. 11 Novembre 1918 \\ F$69622$ Villeurbanne  cedex, France }
\email{maurice.pouzet@univ-lyon1.fr}
\date{July 10, 2014}
\begin{document}
\keywords{Equimorphy \and Embedding \and Isomorphic}
\subjclass[2000]{06A05, 03C64, 03E04}

\begin{abstract}
Two structures are said to be \emph{equimorphic} if each embeds in the
other. Such structures cannot be expected to be isomorphic, and in this
paper we investigate the special case of linear orders, here also called
chains. In particular we provide structure results for chains having
less than continuum many isomorphism classes of equimorphic chains. We
deduce as a corollary that any chain has either a single isomorphism
class of equimorphic chains or infinitely many.
\end{abstract}

\maketitle

\begin{center}
\small{Dedicated to James E. Baumgartner, for his warmth and inspirations}
\end{center}

\section{Introduction}\label{intro}

Two structures are called \emph{equimorphic} (see Fra\"{\i}ss\'e
\cite{key-F}) if each embeds in the other. Generally one cannot expect
equimorphic structures to necessarily be isomorphic. However the
famous Cantor-Bernstein-Schroeder Theorem states that this is the case
for structures in a language with pure equality: if there is an
injection from one set to another and vice-versa, then there is a
bijection between these two sets. The same situation occurs in other
structures such as vectors spaces, where embeddings are linear
injective maps. But as expected it is not in general the case that
equimorphic structures are isomorphic.

In this paper we study the case of the language of one binary
predicate interpreted as a linear order, also called chains, and show
that the situation here is already quite complex. For example one
readily sees that the rationals together with a largest point added is
a chain equimorphic to the rationals themselves, but certainly not
isomorphic as linear orders. In fact we show that for each cardinal
$\kappa$ there is a chain with exactly $\kappa$ isomorphism classes of
equimorphic chains.  We further provide structure results for chains
having less than continuum isomorphism classes of equimorphic chains,
and deduce as a corollary that any chain has either a single
isomorphism class of equimorphic chains or infinitely many. 

In \cite{key-TH}, Thomass\'e conjectures that any countable relational
structure has either a single isomorphism class of equimorphic
structures, countably many, or else continuum many. We verify  this
conjecture for the case of chains.

We conclude this section with some basic terminology used in this
paper. In general an embedding from a structure $\CA$ to a structure
$\CB$ in the same language is an injective map from $\CA$ to $\CB$
which preserves the given structure; in the case of linear orders
$\CA=\langle A;<_A \rangle$ and $\CB=\langle B;<_B \rangle$, this mean
an injective map $f:A \rightarrow B$ such that $x <_A y$ if and only
if $f(x) <_B f(y)$ for all $x,y \in A$.

We will write $\CA^*$ for the reverse order $\CA^*=\langle A;>_A
\rangle$ obtained by reversing the order, and $\CA+\CB$ for the linear
order obtained by extending the two orderings imposing that every
element of $A$ precedes every element of $B$. Given structures $\CA$
and $\CB$, we write $\CA \leq \CB$ if there is an embedding from $\CA$
to $\CB$, and write $\CA \equiv \CB$ if both $\CA \leq \CB$ and $\CB
\leq \CA$; in this case we say that $\CA$ and $\CB$ are
\emph{equimorphic}, or that $\CB$ is a
\emph{sibling} of $\CA$ (and vice-versa). Note that Bonato et
al. \cite{key-BBDS} refer to such a $\CB$ as a \emph{twin} if moreover
it is not isomorphic to $\CA$.  We shall be interested in describing
these siblings and counting their number but obviously only up to
isomorphism, which we denote by $\sib(\CA)$. Thus $\sib(\CA)=1$ means
that all siblings are isomorphic to $\CA$, or that $\CA$ has no twins.
Note also that obviously $\sib(\CA)=\sib(\CA^*)$ for any chain $\CA$.
We further write as usual $\CA \cong \CB$ when the two structures are
isomorphic.

Other more standard terminology and notation can be found in the books
by Fra\"{\i}ss\'e \cite{key-F} and Rosenstein
\cite{key-R}. In particular we assume the reader to be generally
familiar with the notion of indecomposability and Hausdorff rank of a
linear order although we briefly review these notions below.

\section{Ordinals, sums of ordinals and reverse ordinals}
\label{sec:ord}

It is easy to see that any ordinal has only one sibling up to
isomorphism, that is $\sib(\lambda)=1$ for any ordinal $\lambda$. More
generally this is the case for any finite sum of ordinals or reverse
ordinals.

\begin{proposition}[Finite sums of ordinals and reverse ordinals]\label{prop:finitesumord}
\noindent If $C$ is a finite sums of ordinals and reverse ordinals, then $\sib(C)=1$.
\end{proposition}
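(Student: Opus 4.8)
Put $C$ into reduced form $C = C_1 + \cdots + C_n$, with the $C_i$ nonempty and alternating between ordinals and reverse ordinals (merge consecutive blocks of the same kind, drop empty ones). The sequence of order types $C_1,\dots,C_n$ is an isomorphism invariant of $C$, so it is enough to show it is an \emph{equimorphy} invariant, i.e.\ that every sibling $D$ of $C$ has the same reduced form. I would argue by induction on $n$. For $n\leq 1$, $C$ is an ordinal or a reverse ordinal; since $\sib(C)=\sib(C^{*})$ we may take $C=\lambda$ an ordinal, and then $D\equiv\lambda$ forces $D$ to be well-ordered, say $D\cong\mu$, while the two embeddings give $\mu\leq\lambda\leq\mu$, so $D\cong\lambda$. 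One also notes at the outset that a suborder of a finite sum of ordinals and reverse ordinals is again one (intersect with each $C_i$), so any sibling $D$ automatically lies in the class.

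For $n\geq 2$ the plan is to peel an extremal piece off one end of $C$ and off the matching end of $D$, using $\sib(C)=\sib(C^{*})$ to work at whichever end is convenient. If $C$ has a least element, split off its largest well-ordered initial segment, an ordinal $\alpha$; the complementary final segment has no least element and strictly fewer blocks. If $C$ has a greatest but no least element, do the dual at the top with the largest reverse-well-ordered final segment. If $C$ has neither endpoint, split $C$ at its first gap (a cut with no last point below and no first point above) into an initial piece that is gapless and endpoint-free and a final remainder that has no least element; once more the number of blocks drops. The key step is a \emph{peeling lemma}: every sibling $D$ of $C$ admits a cut of the same extremal type, $D=E_1+E_2$, with $E_1$ isomorphic to the piece split off $C$ and $E_2$ equimorphic to the remainder of $C$. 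Granting this, $E_1$ is an ordinal, a reverse ordinal, or a gapless endpoint-free member of the class, while the remainder of $C$ has fewer blocks, so the inductive hypothesis gives $E_2\cong$ remainder, and hence $D=E_1+E_2\cong C$. (The base cases not covered by $n\leq1$ are the members of the class with no endpoints and no gap, for example $\BZ$; these do not reduce under the peeling, and for them one argues directly, exploiting that their suborders are very restricted and that $C\leq D\leq C$ yields a Cantor--Bernstein-type cancellation.)

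The peeling lemma is where I expect the real work, and the main obstacle. One cannot invoke a general principle here, since the largest well-ordered initial segment is \emph{not} an equimorphy invariant for chains at large — for instance $\omega+\BQ\equiv\BQ$ — so the finiteness of the block structure must be used in an essential way. I would analyze how an embedding $f\colon C\to D$ between two members of the class sits relative to the block decompositions: each block of $C$ lands in a union of consecutive blocks of $D$; a well-ordered (resp.\ reverse-well-ordered) subset of $D$ meets each reverse-ordinal (resp.\ ordinal) block in only a finite set; immediate-successor pairs and one-sided limit points are preserved by $f$; and so on. Running this for both $f\colon C\to D$ and $g\colon D\to C$ should then pin down the extremal piece of $D$ up to isomorphism and identify the remainders up to equimorphy. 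Keeping track of the finite ``overspill'' at block boundaries, and handling the endpoint-free case — where the invariant to track is the location of the first gap rather than an initial segment — is the technical heart; the surrounding induction is routine.
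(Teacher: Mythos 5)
There is a genuine gap: the entire argument funnels into the ``peeling lemma'' (every sibling $D$ of $C$ splits as $E_1+E_2$ with $E_1$ isomorphic to the extremal piece of $C$ and $E_2$ equimorphic to the remainder), and that lemma is not proved -- you only outline a programme for proving it (``I would analyze how an embedding sits relative to the block decompositions \dots\ Running this \dots\ should then pin down the extremal piece''). You correctly identify this as the technical heart, and as you yourself observe, no soft argument can work since the largest well-ordered initial segment is not an equimorphy invariant in general ($\omega+\BQ\equiv\BQ$); so until that analysis is actually carried out -- including the finite ``overspill'' at block boundaries, the ambiguity of finite blocks in your ``reduced form'' (a finite chain is both an ordinal and a reverse ordinal, so the reduced form is not quite canonical as stated), and the endpoint-free/gap case -- the proposal is a plan, not a proof.

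It is also worth noting that the paper's proof sidesteps all of this machinery. Instead of seeking a canonical decomposition preserved by equimorphy, it fixes $n$ minimal, then chooses among all decompositions $C=\sum_{i<n}C_i$ into $n$ ordinals or reverse ordinals one that is \emph{minimal under componentwise embeddability}; such a decomposition exists because ordinals (and reverse ordinals) are well ordered under embeddability. Then a two-step back-and-forth does the rest: an embedding $C'\to C$ induces $C'=\sum_i C'_i$ with $C'_i\leq C_i$; an embedding $C\to C'$ induces $C=\sum_i C''_i$ with $C''_i\leq C'_i\leq C_i$; minimality forces $C''_i\equiv C_i$, hence $C'_i\equiv C_i$, and equimorphic ordinals (or reverse ordinals) are isomorphic, so $C'\cong C$. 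No identification of where blocks land, no peeling, and no induction on the number of blocks is needed. If you want to salvage your route you must supply the peeling lemma in full; otherwise the minimal-decomposition argument is the efficient way through.
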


\begin{proof}
Let $n$ be the least integer such that $C$ has a decomposition as a
sum of $n$ ordinals or reverse ordinals.  Choose a decomposition $C:=
\sum_{i<n} C_i$ minimal in the sense that if $C:= \sum_{i<n} C'_i$ is
an other decomposition with $C'_i\leq C_i$ for $i<n$ then $C'_i$ is
equimorphic to $C_i$ for all $i<n$; this exists since ordinals are
well ordered under embeddability. 

Now consider any chain $C'\equiv C$. Since $C'\leq C$, $C'$ must be of
the form $C':= \sum_{i<n} C'_i$ with $C'_i\leq C_i$ for all
$i<n$. Since $C\leq C'$, the same argument yields that $C:= \sum_{i<n}
C''_i$ with $C''_i\leq C'_i$ for all $i<n$. Since $C'_i\leq C_i$ we
have $C''_i\leq C_i$. From the minimality of the decomposition of $C$,
we have $C''_i \equiv C_i$ hence $C'_i \equiv C_i$. This yields
$C'_i \simeq C_i$ thus $C' \simeq C$.
\end{proof}

Note that an ordinal chain for example is rigid, meaning it has no
non-trivial automorphisms. On the other hand it has non-trivial
embeddings, and this subtle distinction will soon play a role.  The
situation very much differs with infinite sums of ordinals as the
following example shows, and this allows us to easily find chains with
any prescribed value of siblings, in particular $\sib(\lambda \cdot \omega^*) ( =
 \sib(\lambda^* \cdot \omega )) = \vert \lambda \vert $ for any infinite ordinal
$\lambda$. This should be later compared with Proposition
\ref{prop:siblingsurordinal} below, where we will see that
$\sib(\omega^{\alpha} \cdot \omega^*+ \omega^{\beta})=1$ if
$\alpha+1\leq \beta$, and $\sib(\sum^{*}_{n<\omega}\omega^{n}+ \gamma)=
\CC$ if $\gamma$ is any ordinal.

\begin{example}[Chain with many siblings]\label{exa:infinitesumord}
For any infinite ordinal $\lambda$, 
\[ \sib(\lambda^* \cdot \omega )= \vert \lambda \vert. \] 
\end{example}

\begin{proof}
Let $\lambda$ be an ordinal and $\vert \lambda \vert = \kappa$. Let
$\beta$ be the smallest ordinal such that $\lambda^* \cdot \omega
\equiv \beta ^* \cdot \omega$, and for every ordinal $\alpha < \beta$, let $C(\alpha) =
\alpha^* + (\beta^* \cdot \omega)$.  Then one readily sees that these chains $C(\alpha)$ are a
complete list of pairwise non-isomorphic siblings. Thus $\sib(\lambda^*
\cdot \omega)= \sib(\beta^* \cdot \omega)= \kappa = \vert \lambda
\vert$.
\qed \end{proof}

Hence  in particular there are chains with continuum many siblings. In
fact there are countable chains with that property, and here are two
examples that will be important for the sequel.

\begin{example}[Countable chain with continuum many siblings] \label{exa:ccsiblings}
\begin{enumerate}
\item $\sib(\BZ \cdot \omega)=\CC$. 
\item $\sib(\BQ)=\CC$.
\end{enumerate}
\end{example}

\begin{proof}
For the first part, let $C = \BZ \cdot \omega$. Now for $X \subseteq
\omega$ infinite, let $C(X) = \sum_{i \in \omega} \BZ^{\chi_i}$, where $\chi_i=1$ if $i \in X$, and
0 otherwise; this means we replace the copy of $\BZ$ by a singleton
for any index $i \notin X$.  Then clearly $C \equiv C(X)$ for any
infinite $X$, but $C(X) \not \cong C(Y)$ whenever $X \neq Y$, and thus
$\sib(C)=\CC$.

\medskip

But now one has that for any $ X \subseteq \omega$,
$\overline{C(X)}:=C(X)+\BQ \equiv \BQ$, and again $\overline{C(X)} \not
\cong \overline{C(Y)}$ whenever $X \neq Y$, and thus $\sib(\BQ)=\CC$. \qed
\end{proof}

This yields the following corollary.

\begin{corollary} \label{cor:smallsib}
$\sib(C)=\CC$ for all non-scattered countable chains. \\
\noindent Equivalently, if $C$ is a countable chain and $\sib(C)<\CC$, then $C$ is scattered. \\
\end{corollary}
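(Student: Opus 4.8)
The plan is to reduce everything to the single chain $\BQ$ via Example \ref{exa:ccsiblings}(2), using the fact that the quantity $\sib$ is an invariant of the equimorphy class. First I would record this invariance: the embeddability relation $\leq$ is transitive, so if $C\equiv D$ then a chain is a sibling of $C$ precisely when it is a sibling of $D$; hence $C\equiv D$ implies $\sib(C)=\sib(D)$. (In particular the class of siblings of any countable chain consists of countable chains, and up to isomorphism there are at most $\CC$ of these, so $\sib(C)\leq\CC$ always.)

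Next I would identify the equimorphy class of a non-scattered countable chain $C$. By definition, $C$ being non-scattered means that $\BQ$ embeds into $C$, i.e. $\BQ\leq C$. On the other hand, by Cantor's theorem every countable chain embeds into $\BQ$ (see Rosenstein \cite{key-R}), so $C\leq\BQ$. Therefore $C\equiv\BQ$.

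Combining the two observations with Example \ref{exa:ccsiblings}(2) gives
\[
\sib(C)=\sib(\BQ)=\CC ,
\]
which is the assertion. The contrapositive formulation is then immediate: if $C$ is a countable chain with $\sib(C)<\CC$, then $C$ cannot be non-scattered, hence is scattered. I do not anticipate any genuine obstacle here; the only point worth stating carefully is the invariance of $\sib$ under $\equiv$, together with the remark that $\CC$ is automatically an upper bound because there are only continuum many countable chains up to isomorphism, so that the value $\CC$ obtained from $\BQ$ is exact rather than merely a lower bound.
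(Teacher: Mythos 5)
Your proposal is correct and is exactly the deduction the paper intends: since every countable non-scattered chain $C$ satisfies $\BQ\leq C$ by definition and $C\leq\BQ$ by Cantor's theorem, it is equimorphic to $\BQ$, and Example \ref{exa:ccsiblings}(2) together with the invariance of $\sib$ under equimorphy gives $\sib(C)=\sib(\BQ)=\CC$. No further comment is needed.
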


Example \ref{exa:ccsiblings} also yields the
following.

\begin{corollary}\label{cor:variousstruc}
If a chain $C$ is an infinite alternating $\omega$-sequence of infinite
ordinals and reverse ordinals, then $\sib(C)\geq \CC$. \\ 

\noindent Similarly if a chain is of the form $C=\sum_{i \in \omega} \kappa_i^*$
(or its reverse) where the $\kappa_i$'s form a strictly  increasing chain of
cardinals (or even ordinals of strictly increasing cardinalities), then
$\sib(C)\geq max\{\CC, sup_i\{\kappa_i\}\}$.
\end{corollary}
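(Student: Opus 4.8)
The plan is to reduce both statements to Example \ref{exa:ccsiblings} by exhibiting, inside the given chain $C$, an interval that is (a copy of) $\BZ \cdot \omega$ or of $\sum_{i} \kappa_i^*$, and then to transplant the continuum-many siblings constructed there back into $C$ while keeping equimorphy with $C$. The general principle I would isolate first is a small lemma: if $C = A + D + B$ and $D \equiv D'$ via embeddings that also give $D' \leq D$, then $A + D' + B \equiv C$; moreover if the $D'$ range over a family of pairwise non-isomorphic chains that are ``rigid enough'' to be recognized as an interval of $A + D' + B$, then the resulting chains are pairwise non-isomorphic. So the whole argument is: locate the relevant interval, apply the construction of Example \ref{exa:ccsiblings} (or its cardinal analogue) to that interval, and check non-isomorphy of the global chains.

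For the first assertion, write $C$ as an $\omega$-sequence $C = \sum_{i<\omega} C_i$ where each $C_i$ is an infinite ordinal or reverse ordinal, alternating in type. Inside each $C_i$ one can embed a copy of $\omega$ or $\omega^*$, and because the types alternate, the concatenation of a tail of these contains (an isomorphic copy of, and an equimorphic copy of) $\BZ \cdot \omega$ as an initial-or-final segment after discarding finitely much; more carefully, $C$ is equimorphic to $D + \BZ\cdot\omega$ for a suitable front piece $D$ (possibly empty), since each $C_i$ embeds in and receives an embedding from the corresponding block. Then for $X \subseteq \omega$ infinite set $C(X) := D + \big(\sum_{i\in\omega}\BZ^{\chi_i}\big)$ as in Example \ref{exa:ccsiblings}; each is equimorphic to $C$, and distinct infinite $X$ give non-isomorphic $C(X)$ because the pattern of maximal $\BZ$-blocks versus singletons in the ``$\omega$-tail'' of $C(X)$ is an isomorphism invariant. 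This gives $\sib(C) \geq \CC$.

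For the second assertion, with $C = \sum_{i\in\omega}\kappa_i^*$ and the $\kappa_i$ strictly increasing in cardinality, I would first get $\sib(C)\geq \CC$ by the same device: $C$ contains, and is equimorphic to, a front piece plus $\BZ\cdot\omega$ (each $\kappa_i^*$ with $\kappa_i$ infinite contains both $\omega$ and $\omega^*$), so Example \ref{exa:ccsiblings} applies verbatim. To get the lower bound $\sup_i\{\kappa_i\}$ as well, note $\sup_i \kappa_i =: \mu$ and for each $\kappa < \mu$ there are cofinally many $i$ with $\kappa \leq \kappa_i$; replacing $\kappa_{i_0}^*$ by $\kappa^*$ for the least such $i_0$ (and checking $\kappa^* + \sum_{i>i_0}\kappa_i^* \equiv \sum_{i\geq i_0}\kappa_i^*$, which holds since the tail already absorbs everything of size $\leq \mu$) produces $\mu$ many siblings, pairwise non-isomorphic because the order type of the first block is an invariant. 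Combining the two bounds yields $\sib(C) \geq \max\{\CC, \mu\}$, and the reverse-order case is identical by $\sib(C)=\sib(C^*)$. The main obstacle is the non-isomorphy bookkeeping: one must argue that the local modification (a single block, or the $\omega$-tail pattern) survives as an isomorphism invariant of the whole chain, which is where the alternation of types, respectively the strict growth of the $\kappa_i$, is essential — without it a global automorphism could shuffle blocks and destroy the invariant.
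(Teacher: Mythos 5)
Your reduction of both assertions to Example \ref{exa:ccsiblings} rests on two claims that are false. First, for the alternating case, you assert that $C$ is equimorphic to $D+\BZ\cdot\omega$ for a suitable front piece $D$, ``since each $C_i$ embeds in and receives an embedding from the corresponding block.'' An infinite ordinal need not embed into $\BZ$ (nor into $\omega$), so only one of the two required embeddings exists. Concretely, take $C=(\omega_1+\omega^*)\cdot\omega$: every nonempty final segment of $C$ contains a copy of $\omega_1$, which does not embed into the countable chain $\BZ\cdot\omega$, so any embedding of $C$ into $D+\BZ\cdot\omega$ must send all of $C$ into $D$; combined with $D+\BZ\cdot\omega\leq C$ this would force $C+\BZ\cdot\omega\leq C$, which fails because the images of the (indecomposable, mutually separated) $\omega_1$-blocks are cofinal in $C$ and leave no room above them. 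So your chains $C(X)=D+\sum_i\BZ^{\chi_i}$ are not siblings of $C$, and the first part of your argument collapses. Second, for $C=\sum_{i\in\omega}\kappa_i^*$ you claim each infinite $\kappa_i^*$ contains both $\omega$ and $\omega^*$; in fact a reverse ordinal contains no copy of $\omega$ (that would be an infinite descending sequence in $\kappa_i$). Worse, every bounded initial segment of $\sum_i\kappa_i^*$ is reverse well ordered, so even $\omega+\omega^*$, let alone $\BZ\cdot\omega$, does not embed into $C$ at all, and Example \ref{exa:ccsiblings} cannot ``apply verbatim.''

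What does work, and is what the paper's one-line pointer to Example \ref{exa:ccsiblings} intends for the second assertion, is the deletion construction applied to the blocks themselves: for infinite $X\subseteq\omega$ put $C(X)=\sum_{i\in X}\kappa_i^*$; then $C(X)\leq C$ trivially, and $C\leq C(X)$ because the strict increase of the $\kappa_i$ lets you shift each block into a later block indexed by $X$, while distinct $X$ give non-isomorphic chains since the sequence of block cardinalities is an isomorphism invariant. Your treatment of the $\sup_i\kappa_i$ lower bound (replacing one block by $\kappa^*$, in the style of Example \ref{exa:infinitesumord}) is essentially sound. For the first assertion no such uniform deletion is available, because a block need not embed into any later block; one either invokes Proposition \ref{prop:manysibling} (since $(\omega^*+\omega)\cdot\omega$ embeds via the junctions and $C$ is scattered), or first passes to an unbounded tail of the block sequence (the ordinals and reverse ordinals form a well quasi-order, cf.\ Lemma \ref{lem:wqo}) before modifying it. As written, your proposal proves neither inequality $\sib(C)\geq\CC$.
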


We will show in Proposition \ref{prop:manysibling} that $\sib(C)\geq
\CC$ whenever $(\omega^*+\omega)\cdot \omega$ or $(\omega^*+\omega)
\cdot \omega^*$ are embeddable in a scattered chain $C$.

We hastily note that there are uncountable dense chains $C$ such that
$\sib(C)=1$, and one such construction is owed to Dushnik and Miller
\cite{key-DM} (see also Rosenstein \cite{key-R}).  Indeed they  have constructed, by a
clever $\CC$-length diagonalization, a dense uncountable subchain $C$
of the real numbers which is \emph{embedding rigid}, meaning that the
identity map is the only embedding of $C$ into itself. Clearly this
implies that $\sib(C)=1$.

We note that Baumgartner showed in \cite{key-B} that there are
$\kappa$-dense rigid chains of size $\kappa$ for each regular and uncountable
cardinal $\kappa$, although we do not know if these can be made
embedding rigid. Hence we ask the following. 

\begin{problem}\label{prob:kappaembeddingrigid}
Are there $\kappa$-dense embedding rigid chains of size $\kappa$ for
each regular and uncountable cardinal $\kappa$?
\end{problem}

\section{Structure Results}
\label{sec:struc}

In this section we describe two results characterizing the structure of a
chain and its number of siblings, which together yield the following dichotomy:

\begin{theorem} \label{thm:1orinfinite}
Let $C$ be any chain. Then $\sib(C)=1$ or $\sib(C) \geq \aleph_0$.
\end{theorem}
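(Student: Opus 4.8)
The dichotomy should follow by combining a structural reduction of an arbitrary chain into its scattered and non-scattered parts with the two counting results promised for scattered chains. The plan is to first dispose of the trivial reductions and then argue that a scattered chain with finitely many siblings must in fact have exactly one. First I would handle non-scattered chains: by Corollary \ref{cor:smallsib} any countable non-scattered chain already has $\sib(C)=\CC\geq\aleph_0$, and for an uncountable chain containing a copy of $\BQ$ one can graft the $C(X)$ constructions of Example \ref{exa:ccsiblings} onto a dense subchain to produce continuum many pairwise non-isomorphic siblings (this needs the observation that attaching the same chain before a dense piece in a fixed interval preserves equimorphy, exactly as in the $\overline{C(X)}$ argument). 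Thus we may assume from now on that $C$ is scattered.

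The heart of the matter is therefore: if $C$ is scattered and $1<\sib(C)<\aleph_0$, derive a contradiction. Here I would invoke the structure theorem for scattered chains with few siblings that the paper announces it will prove in this section (the companion to Theorem \ref{thm:1orinfinite}), which should say that such a $C$ decomposes, up to equimorphy, as a finite sum of "building blocks" — indecomposable pieces, pieces of bounded Hausdorff rank, and the like — each of which is either rigid enough to be sibling-unique or else of a recognisable form. If $C$ has a twin $C'\not\cong C$, then along such a finite decomposition $C=\sum_{i<n}C_i$ some block $C_i$ must already be responsible for the twinning, and one shows that a single block producing one twin in fact produces infinitely many: the typical mechanism is that a twin arises by shifting a "head" or "tail" of an indecomposable piece (as in Example \ref{exa:infinitesumord}, where $\lambda^*\cdot\omega$ yields the chains $C(\alpha)=\alpha^*+(\beta^*\cdot\omega)$ indexed by all $\alpha<\beta$), and the indexing set of such shifts, once nonempty, is infinite because it is downward closed in an ordinal or in a well-founded rank and cannot be a nonempty finite initial segment that is also equimorphy-closed. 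Concatenating these infinitely many block-level siblings back into the finite sum gives infinitely many pairwise non-isomorphic siblings of $C$, since distinct shifts remain distinguishable after summing with fixed chains on either side.

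The main obstacle I anticipate is precisely the passage from "$C$ has at least two siblings" to "$C$ has infinitely many": it is conceivable a priori that a clever finite decomposition could have, say, exactly two siblings through some pigeonhole interaction among several blocks. Ruling this out requires the structural classification to be fine enough that the twinning is localised to one block and that block's sibling set is genuinely controlled by an ordinal-indexed (hence infinite, once nonempty) family of cut-shifts; I would lean on Proposition \ref{prop:finitesumord} to know that finite sums of ordinals and reverse ordinals are sibling-rigid, so that the only way twinning can occur is through an infinite-sum phenomenon, which is exactly the source of the infinitude. Once the localisation and the ordinal-indexing are in hand, assembling the infinitely many global siblings and checking they are pairwise non-isomorphic is routine bookkeeping with $+$ and $^*$, and the dichotomy follows.
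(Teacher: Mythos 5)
Your reduction to the scattered case is where the argument breaks down, and it breaks in a way the paper itself warns about. You claim that an uncountable non-scattered chain $C$ (one containing a copy of $\BQ$) must have at least continuum many siblings, by grafting the $C(X)$ constructions of Example \ref{exa:ccsiblings} onto a dense subchain. That grafting step requires that after inserting $C(X)$ into an interval of $C$ you can still embed the enlarged chain back into $C$; for $\BQ$ this works because of the universality and homogeneity of $\BQ$ among countable chains, but it fails completely for a rigid dense chain. The paper's own Dushnik--Miller example is a direct counterexample to your claim: a dense uncountable subchain of $\BR$ that is \emph{embedding rigid}, hence has $\sib(C)=1$, even though (being dense) it certainly contains copies of $\BQ$ in every interval. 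So "non-scattered $\Rightarrow$ $\sib(C)\geq\aleph_0$" is simply false, and the theorem cannot be proved by splitting into "scattered" versus "non-scattered" and disposing of the latter wholesale.

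The paper's route is different and avoids this trap: it writes an arbitrary $C$ as $\sum_{i\in D}C_i$ with $D$ dense and each $C_i$ scattered (Hausdorff condensation), and proves (Theorem \ref{thm:fullchar}, via Lemma \ref{lem:interval}) that when $\sib(C)<\CC$ every self-embedding of $C$ must preserve each block $C_i$, so that $\sib(C)=\max_i\sib(C_i)$ and all but finitely many blocks are sibling-unique. The dense skeleton $D$ contributes nothing to the sibling count precisely because it may be rigid; all twinning is localised in the scattered blocks. Then Theorem \ref{thm:scattered} (the $1$-or-$\geq\aleph_0$ dichotomy for scattered chains, proved through the surordinal decomposition, Proposition \ref{prop:siblingsurordinal}, and the indecomposable-sum calculus of Lemma \ref{lem:indec2}) finishes the argument: if $\sib(C)<\aleph_0$ then each scattered $C_i$ has $\sib(C_i)<\aleph_0$, hence $\sib(C_i)=1$, hence $\sib(C)=1$. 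Your second paragraph --- localising a twin to one block of a finite decomposition and arguing that the set of "shifts" producing twins is ordinal-indexed and hence infinite once nonempty --- is in the right spirit for the scattered half (it is essentially Lemma \ref{finalsegment} and Proposition \ref{prop:siblingsurordinal}), but it is only a sketch and it cannot rescue the argument without a correct treatment of the non-scattered case.
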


We will extend Corollary  \ref{cor:variousstruc} as follows:

\begin{proposition}\label{prop:manysibling}  
If $(\omega^*+\omega)\cdot\omega$ or $(\omega^*+\omega)\cdot \omega^*$ are
embeddable in a scattered chain $C$, then $\sib(C)\geq \CC$.
\end{proposition}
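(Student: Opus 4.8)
The plan is to reduce the statement to a single case and then generalise Example \ref{exa:ccsiblings}. Since $\sib(C)=\sib(C^{*})$ and, using $\omega^{*}+\omega\cong\BZ\cong(\omega^{*}+\omega)^{*}$ together with $(P\cdot Q)^{*}=P^{*}\cdot Q^{*}$, we get $\bigl((\omega^{*}+\omega)\cdot\omega^{*}\bigr)^{*}=(\omega^{*}+\omega)^{*}\cdot\omega=(\omega^{*}+\omega)\cdot\omega$, the two hypotheses are mutual reverses; so it suffices to treat $(\omega^{*}+\omega)\cdot\omega\le C$. Fix such an embedding and write its image as $\sum_{i<\omega}B_i$ with each $B_i\cong\omega^{*}+\omega$. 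Let $M_i$ be the part of $C$ strictly between $B_i$ and $B_{i+1}$, let $L$ be the part below $B_0$, and $N$ the final segment above all $B_i$, so that $C=L+\sum_{i<\omega}(B_i+M_i)+N$ with $L$, $N$, $M_i$ possibly empty.

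For infinite $X\subseteq\omega$ let $C_X$ be obtained from $C$ by replacing $B_i$ with a single point for each $i\notin X$, exactly as in Example \ref{exa:ccsiblings}; then $C_X\le C$ is immediate. The content is to prove $C\le C_X$, whence $C_X\equiv C$. For this it suffices to map $L$ and $N$ identically and to embed $\sum_i(B_i+M_i)$ into $\sum_i(B_i^{X}+M_i)$, where $B_i^{X}=B_i$ for $i\in X$ and is a point otherwise. Sending the $i$-th block on the left into the $g(i)$-th surviving block on the right for a strictly increasing $g\colon\omega\to X$, and sending $M_i$ into the gap of $C_X$ between those two survivors, works as soon as $M_i$ embeds into that gap; since $g(i)\ge i$ and the gap contains a copy of $M_{g(i)}$, it is enough that the sequence $(M_i)$ be $\le$-non-decreasing. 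The main obstacle is that this need not hold for an arbitrary copy (already for $C=\BZ\cdot\omega+\omega_1$ a copy whose $\BZ$-blocks lie below the $\omega_1$ is useless), so one must first replace the given copy by a better one. I would sparsify the blocks, passing to an infinite subset along which the between-pieces become $\le$-non-decreasing: this is available in the countable case from the well-quasi-ordering of countable chains under embeddability (Laver), and in general by choosing each successive block far enough up that the finite chain accumulated since the previous chosen block embeds into infinitely many later between-pieces, exploiting that every tail of the copy still embeds $(\omega^{*}+\omega)\cdot\omega$ and that $p+\BZ\cdot\omega\le\BZ\cdot\omega$ for finite $p$. Securing such a copy is the step I expect to be genuinely delicate.

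Granting a good copy, $C_X\equiv C$ for every infinite $X$, and it remains to see that enough of the $C_X$ are pairwise non-isomorphic. As in Example \ref{exa:ccsiblings} one recovers $X$ from the isomorphism type of $C_X$ via the condensation of $C_X$ under the equivalence ``$x,y$ bound a finite interval'': its classes display the surviving $\BZ$-blocks of the copy and the finite runs between consecutive survivors, and this pattern codes $X$ injectively — provided the copy has been chosen so that its $\BZ$-blocks are not absorbed in the condensation by $L$, $N$, or the $M_i$, which one arranges by taking the copy as high as possible and convex-consecutive where possible (the limit structure of the condensation then isolates the modified region). Since there are $\CC$ infinite sets $X$ and the coding is injective on a set of size $\CC$, we obtain $\CC$ pairwise non-isomorphic siblings of $C$, hence $\sib(C)\ge\CC$. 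In summary, the counting and the embedding $C\le C_X$ are routine adaptations of Example \ref{exa:ccsiblings}; the real work is the preliminary replacement of the chosen copy by one whose between-pieces are dominated by later pieces.
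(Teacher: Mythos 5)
Your reduction to the case $(\omega^*+\omega)\cdot\omega\le C$ and the definition of $C_X$ are fine, and the overall strategy (deleting blocks of a chosen copy, generalizing Example \ref{exa:ccsiblings}) is genuinely different from the paper's, which instead inducts on Hausdorff rank, uses Proposition \ref{prop:finitesum} and Lemma \ref{lem:wqo} to reduce to a chain ending (or beginning) in an \emph{unbounded} $\omega$-sum of indecomposable pieces with all $\equiv_0$-classes infinite, and then \emph{inserts} coding gadgets $\omega+\varphi(m)+M+\omega^*$. But the step you yourself flag as delicate --- producing a copy whose between-pieces are non-decreasing under embeddability --- is a genuine gap, and neither repair you sketch closes it. Sparsifying the blocks to an infinite subset $\{n_0<n_1<\cdots\}$ replaces the $M_i$ by the concatenations $M_{n_k}+B_{n_k+1}+\cdots+M_{n_{k+1}-1}$, which are \emph{not} a subsequence of the original $M_i$'s; so extracting a perfect subsequence of $(M_i)$ by well-quasi-ordering gives no control over the new between-pieces, and the choice is circular (the pieces you must compare depend on the subset you are choosing). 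What is actually needed is a perfect subarray of the two-parameter family $\{j<k\}\mapsto (B_j,B_k)_C$ of intervals, i.e.\ Laver's \emph{better}-quasi-ordering of scattered types applied to a barrier on pairs --- considerably more than the WQO you invoke. Your general-case recipe only handles finite accumulated pieces, whereas an infinite between-piece (as in $\BZ+\omega_1^*+\BZ\cdot\omega$) forces one to move or discard blocks, and you give no argument that this terminates in a good copy in general.

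The counting step is also not established. ``Taking the copy as high as possible and convex-consecutive where possible'' is not a construction, and the assertion that the condensation pattern ``codes $X$ injectively'' is false as stated: already for $C=\BZ\cdot\BZ$ with the copy consisting of the blocks of non-negative index, the relevant labelled condensation has order type $\omega^*+\omega\cong\BZ$, and the shift automorphism gives $C_X\cong C_Y$ for $X$ the even and $Y$ the odd numbers. The conclusion survives because the fibres of $X\mapsto [C_X]$ are small, but that is precisely what must be proved, and your sketch does not address automorphisms of the condensation at all. The paper avoids both difficulties at once: induction on Hausdorff rank together with Lemma \ref{lem:sib} disposes of any condensation class that already has $\CC$ siblings; the structural reduction then yields an unbounded $\omega$-sum $\sum_m B'_m$, where unboundedness (each $B'_m$ embeds into cofinally many $B'_{m'}$) is the robust substitute for your monotone between-pieces and makes equimorphy of the modified chain automatic; and arranging beforehand that all $\equiv_0$-classes are infinite makes the finite $\equiv_0$-classes of the inserted gadgets canonical, so the decoding of $\varphi$ is immediate. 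To complete your route you would need to prove a ``good copy'' lemma via the BQO machinery and then bound the fibres of the coding map; as written, both halves of the argument are open.
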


Chains in which neither $(\omega^*+\omega)\cdot\omega$ nor
$(\omega^*+\omega)\cdot\omega^*$ are embeddable have a special form
described in Proposition \ref{prop:finitesum} below. For this let us first 
recall the notion of ``surordinal'' introduced independently by Slater
\cite{key-S} and Jullien \cite{key-J1}.  A chain $C$ or its
order type is a \emph{surordinal} if for each $x\in C$ the cofinal
segment generated by $x$ is well ordered. Equivalently, $1+\omega^*$
does not embed into $C$. Jullien called \emph{pure} surordinals those
which are strictly left indecomposable (in particular, ordinals are
not pure). He observed that every non-pure surordinal is the sum of a
pure surordinal and an ordinal; the decomposition is not unique but
the pure surordinals figuring in two decompositions are equimorphic;
we will call \emph{component} such pure surordinal defined up to
equimorphy.  Jullien showed in his thesis that a surordinal is pure if
and only if it can be written as a sum $\sum^{*}_{n<\omega} C_n$
where each $C_n$ has order type $\omega^{\alpha_n}$ and the sequence
$(\alpha_{n})_{n<\omega}$ is non-decreasing. Furthermore, this sum is
unique (see Jullien \cite{key-J4} Proposition 3.3.2).

\begin{proposition}\label{prop:finitesum} 
Neither $(\omega^*+\omega)\cdot\omega$ nor $(\omega^*+\omega)\cdot\omega^*$
are embeddable into a chain $C$ if and only if $C$ is a finite sum of
surordinals and reverse of surordinals.
\end{proposition}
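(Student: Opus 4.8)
The plan is to prove the two implications separately, with the reverse direction being essentially routine and the forward direction carrying the real content.

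For the easy direction, suppose $C$ is a finite sum of surordinals and reverses of surordinals; I want to show that $(\omega^*+\omega)\cdot\omega$ does not embed into $C$ (the statement for $(\omega^*+\omega)\cdot\omega^*$ follows by reversing). First I would reduce to the case of a single summand: if $(\omega^*+\omega)\cdot\omega$ embedded into a finite sum $\sum_{i<k} D_i$, then by a pigeonhole/indecomposability argument on the $\omega$-indexed blocks of copies of $\omega^*+\omega$, one of the summands $D_i$ would contain an embedded copy of $(\omega^*+\omega)\cdot\omega$ (or of a final segment thereof, which is still of that form). So it suffices to check that neither a surordinal nor the reverse of a surordinal admits such an embedding. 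If $D$ is a surordinal, then $1+\omega^*$ does not embed in $D$ by definition, but $\omega^*+\omega$ contains $1+\omega^*$ (take the last point of the $\omega^*$ piece together with the points below it), so $\omega^*+\omega$ already fails to embed; a fortiori $(\omega^*+\omega)\cdot\omega$ fails. If $D=E^*$ with $E$ a surordinal, then I claim $(\omega^*+\omega)\cdot\omega$ still does not embed: an embedding would in particular embed $\omega^*+\omega$ cofinally-below some point, forcing a copy of $1+\omega^*$ inside a cofinal segment of $E$ read backwards, contradicting that $E$ is a surordinal — here I should be slightly careful and instead observe directly that $(\omega^*+\omega)\cdot\omega$ contains $1+\omega^*$ as well, so the same one-line argument applies. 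Thus no finite sum of surordinals and reverses of surordinals contains $(\omega^*+\omega)\cdot\omega$ or its reverse.

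The forward direction is the heart of the matter. Assume neither $(\omega^*+\omega)\cdot\omega$ nor $(\omega^*+\omega)\cdot\omega^*$ embeds into $C$; I want to produce a decomposition of $C$ into finitely many surordinals and reverses of surordinals. The key first observation is that under this hypothesis $C$ is scattered: a non-scattered chain contains $\BQ$, and $\BQ$ contains $(\omega^*+\omega)\cdot\omega$ (indeed it contains every countable chain). So $C$ is scattered, and I can work with its Hausdorff rank and the structure theory of scattered chains. The strategy is to partition $C$ into maximal convex pieces according to local behaviour: call a point $x\in C$ \emph{right-good} if the cofinal segment of $C$ generated by $x$ (i.e.\ $\{y : y \ge x\}$) is well ordered, and \emph{left-good} dually. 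The hypothesis should force that the right-good points form a final segment of $C$ and the left-good points form an initial segment — the obstruction to a right-good point lying below a non-right-good point is exactly an embedded $1+\omega^*$ above it, and iterating such configurations along an $\omega$- or $\omega^*$-indexed sequence manufactures a copy of $(\omega^*+\omega)\cdot\omega$ or $(\omega^*+\omega)\cdot\omega^*$. So I would show: (1) the set $R$ of right-good points is a final segment; (2) the set $L$ of left-good points is an initial segment; (3) on the complement $C\setminus(L\cup R)$ — the ``bad'' points, which are simultaneously witnessing $1+\omega^*$ above and $\omega+1^*$, i.e. $\omega^*$-below-reversed, below — there are only finitely many ``alternations'', because an infinite alternation again builds one of the forbidden orders. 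Concretely, define a finite partition $C = C_0 + C_1 + \cdots + C_{m}$ where consecutive blocks alternate between ``$C_j$ is a final segment on which the order looks like a surordinal downward'' — hmm, let me restate: between ``$C_j \le L$-type behaviour, i.e.\ a reverse surordinal'' and ``$C_j \ge R$-type behaviour, i.e.\ a surordinal'', and argue the number of blocks is finite. The cleanest implementation: by scatteredness and Hausdorff's theorem do an induction on rank, or alternatively prove directly that $C$ has finitely many maximal convex ``surordinal-or-reverse-surordinal'' blocks by a direct ``no infinite alternation'' argument, then invoke Jullien's decomposition of each block.

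The main obstacle I expect is in step (3) / the finiteness of the alternation: one must show that if $C$ is chopped into its maximal alternating surordinal / reverse-surordinal blocks and there were infinitely many of them, then one can select witnesses from infinitely many blocks so as to embed $(\omega^*+\omega)\cdot\omega$ or $(\omega^*+\omega)\cdot\omega^*$. This requires care because a single maximal block might be, say, an ordinal (which is both a surordinal and trivially a ``reverse surordinal'' only if it is finite), so the notion of ``maximal block'' and ``genuine alternation'' needs to be pinned down precisely — presumably one fixes the break points to be exactly the places where a $1+\omega^*$ pattern meets an $\omega+1$-reversed pattern, and shows each such break point contributes a copy of $\omega^*+\omega$, with $\omega$-many or $\omega^*$-many break points then directly giving the forbidden product. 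Managing the bookkeeping so that the selected copies of $\omega^*+\omega$ are genuinely stacked in order type $\omega$ or $\omega^*$ (rather than, say, order type $\zeta$ or something reshuffled) is where the scatteredness and the precise definition of the break points have to be used, and it is the step I would write out most carefully.
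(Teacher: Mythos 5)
Your reverse direction follows the paper's route (indecomposability of $(\omega^*+\omega)\cdot\omega$ reduces the question to a single summand, which is then killed by a $1+\omega^*$ or $\omega+1$ obstruction), but it rests on a false claim: $\omega^*+\omega$ does \emph{not} contain $1+\omega^*$. Every copy of $\omega^*$ inside $\omega^*+\omega\cong\BZ$ is an infinite decreasing sequence of integers, hence coinitial, so no point lies below it; indeed $\omega^*+\omega$ is itself a surordinal (the paper uses exactly this in its rank-$0$ base case). Your parenthetical self-correction is the right fix and should replace the faulty sentence: $(\omega^*+\omega)\cdot\omega$ contains $1+\omega^*$ (a point of one block beneath the $\omega^*$ of the next block) and likewise contains $\omega+1$, and neither pattern embeds into a surordinal, respectively a reverse surordinal.

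The forward direction, which you rightly call the heart of the matter, is not actually proved. Your steps (1) and (2) are vacuous: the set of points whose cofinal segment is well ordered is automatically a final segment, with no hypothesis needed, so all the content sits in step (3), the finiteness of the alternation, which you only describe and explicitly defer. That step is genuinely delicate and is not a routine ``infinitely many blocks yield the forbidden order'' argument: for instance $C=(\omega^{2})^{*}$ is a reverse surordinal, yet its condensation into maximal surordinal intervals (the $\equiv_{well}$-classes) is infinite of type $\omega^*$, so an infinite family of maximal surordinal blocks does not by itself produce $(\omega^*+\omega)\cdot\omega$ or $(\omega^*+\omega)\cdot\omega^*$; one must control how the $\equiv_{well}$- and $\equiv_{well^*}$-condensations interleave, which the paper does separately in Lemmas \ref{lem:equivclasses} and \ref{lem:meet}. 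The paper's proof of this proposition avoids the bookkeeping entirely by inducting on the Hausdorff rank: if $h(C)=\alpha'+1$ then $C/\equiv_{\alpha'}$ is finite, $\omega$, $\omega^*$ or $\omega^*+\omega$, the induction hypothesis handles each class, and the forbidden embeddings force all but finitely many classes to be well ordered or reversely well ordered, collapsing the infinite tail; a coinitiality/cofinality split handles limit rank. You name this induction as an alternative but do not carry it out either, so as it stands the implication you identify as carrying ``the real content'' is missing its proof. (Your observation that the hypothesis forces $C$ to be scattered, via $\BQ$, is correct and worth keeping, since the Hausdorff condensation only terminates in a point for scattered chains.)
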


Thus, according to Propositions \ref{prop:manysibling} and
\ref{prop:finitesum}, scattered chains with few ($< \CC$) siblings are finite
sums of surordinals and their reverse.  In the next proposition, we
compute the number of siblings of a surordinal.

\begin{proposition}\label{prop:siblingsurordinal}
Let $C$ be a surordinal. Then:
\begin{enumerate}   
\item $\sib(C)=1$ if and only if either $C$ is an ordinal, 
$\omega^*$, or $C$ is not pure but the sequence in a component is
stationary, that is $C= \omega^{\alpha}\cdot \omega^*+
\omega^{\beta}+\gamma$ with $\alpha+1\leq \beta$ and $\gamma$ ordinal.
\item $\sib(C)= \vert C\vert $ if $C$ is pure and the sequence $(\alpha_{n})_{n<\omega}$ in the decomposition  of $C$ is stationary. 
\item $\sib(C)= \vert C'\vert ^{\aleph_0}$ if the sequence in a component $C'$ of $C$ is non-stationary.  
\end{enumerate}
\end{proposition}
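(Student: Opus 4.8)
The plan is to analyze surordinals through Jullien's decomposition, treating the three cases separately and leaning on the uniqueness of the representation $\sum^{*}_{n<\omega} C_n$ with $C_n$ of type $\omega^{\alpha_n}$ and $(\alpha_n)_{n<\omega}$ non-decreasing.

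\emph{Case (1).} If $C$ is an ordinal, $\sib(C)=1$ is Proposition \ref{prop:finitesumord}; $\omega^*$ is immediate since its only sibling is itself (any chain embedding into $\omega^*$ and with $\omega^*$ embedding into it must be a reverse ordinal, hence $\omega^*$). The interesting subcase is $C=\omega^{\alpha}\cdot\omega^*+\omega^{\beta}+\gamma$ with $\alpha+1\le\beta$ and $\gamma$ an ordinal. Here I would argue: the pure component is $\omega^{\alpha}\cdot\omega^*$, with stationary exponent sequence constantly $\alpha$; the ordinal part $\omega^{\beta}+\gamma$ is uniquely determined up to equimorphy as an ordinal, hence up to isomorphism. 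Given a sibling $C'\equiv C$, use $C'\le C$ and $C\le C'$ to show $C'$ is again a surordinal that is not pure (since $1+\omega^*$ does not embed, and the ``ordinal tail'' cannot be absorbed because $\beta>\alpha$ forces $\omega^{\beta}$ to survive any embedding of $C$ into $C'$), so $C'$ has a component equimorphic to $\omega^{\alpha}\cdot\omega^*$ and an ordinal tail equimorphic to $\omega^{\beta}+\gamma$. The condition $\alpha+1\le\beta$ is exactly what prevents the phenomenon of Example \ref{exa:infinitesumord} (where $\lambda^*\cdot\omega$ has many siblings by shaving initial segments off): because the final $\omega^{\beta}$ block strictly dominates each $\omega^{\alpha}$ block, no initial portion of the $\omega^{\alpha}\cdot\omega^*$ part can be trimmed without losing embeddability back into $C$. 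This rigidity argument, combined with $\sib$ of an ordinal being $1$, gives $C'\cong C$.

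\emph{Case (2).} If $C$ is pure with stationary sequence, then $C\equiv \omega^{\alpha}\cdot\omega^*$ for the eventual value $\alpha$, so by Example \ref{exa:infinitesumord} applied with $\lambda=\omega^{\alpha}$ we get $\sib(C)=\sib(\omega^{\alpha}\cdot\omega^*)=|\omega^{\alpha}|=|C|$; one checks $|C|=\aleph_0$ when $\alpha<\omega_1$ is countable, matching $|\lambda|$, and in general $|C|=|\omega^{\alpha}|$.

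\emph{Case (3).} If a component $C'$ has non-stationary (strictly increasing, cofinally) exponent sequence $(\alpha_n)$, write $C\equiv C' = \sum^{*}_{n<\omega}\omega^{\alpha_n}$ after discarding a possible finite ordinal tail. For the lower bound, I would exhibit $|C'|^{\aleph_0}$ pairwise non-isomorphic siblings by, for each function $f:\omega\to\omega^{\alpha_0}$ (or more precisely a coordinate-wise choice of an ordinal $<$ the relevant block size), inserting $f(n)$ extra points — equivalently replacing $\omega^{\alpha_n}$ by $\omega^{\alpha_n}+\delta_n$ for $\delta_n<\omega^{\alpha_{n}}$ — and checking that (i) equimorphy is preserved because each perturbed block still embeds into, and receives, the corresponding original block (using $\alpha_n<\alpha_{n+1}$ so the discrepancy is absorbed one level up), and (ii) distinct sequences $(\delta_n)$ give non-isomorphic chains, since the reverse-$\omega$ structure lets one canonically recover each $\delta_n$ from the isomorphism type. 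For the upper bound, any sibling $C''$ must, by a double-embedding argument as in the proof of Proposition \ref{prop:finitesumord}, be of the form $\sum^{*}_{n<\omega} D_n$ with $D_n\equiv \omega^{\alpha_n}$, and each such $D_n$ is an ordinal (a cofinal segment of a surordinal is well ordered), hence $D_n = \omega^{\alpha_n}+\delta_n$ with $\delta_n<\omega^{\alpha_n}$; counting the choices of $(\delta_n)_{n<\omega}$ gives at most $\big(\sup_n \omega^{\alpha_n}\big)^{\aleph_0} = |C'|^{\aleph_0}$ isomorphism types.

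The main obstacle I anticipate is Case (3): making precise exactly which perturbed sequences $(\delta_n)$ yield genuinely distinct isomorphism types (the $\omega^{\alpha_n}$ blocks can ``shift'' carried digits between consecutive coordinates, so the naive count overshoots and one must pin down canonical normal forms), and dually verifying that every sibling arises this way — in particular that the decomposition into the $D_n$ blocks is forced and that no block can migrate mass across infinitely many coordinates. The non-stationarity hypothesis is what rescues both directions: strict cofinal growth of the exponents both permits the absorption needed for equimorphy and blocks the pathological migration, so the bookkeeping, while delicate, closes.
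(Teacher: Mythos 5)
Your Cases (1) and (2) follow essentially the paper's route: reduce to the Jullien normal form $\sum^{*}_{n<\omega}\omega^{\alpha_n}$ plus an ordinal tail, observe that a pure stationary surordinal is equimorphic to $\omega^{\alpha}\cdot\omega^{*}$, and invoke Example \ref{exa:infinitesumord}; that part is fine (and no less terse than the paper itself). The problem is Case (3), where your argument has genuine gaps in both directions. For the lower bound, the family $\omega^{\alpha_n}+\delta_n$ with $\delta_n<\omega^{\alpha_n}$ suffers exactly from the absorption/normal-form collisions you flag at the end, and you do not close them: e.g.\ whenever every exponent in the Cantor normal form of $\delta_{n+1}$ is below $\alpha_n$, the tail $\delta_{n+1}$ is swallowed by the next block and the perturbation is invisible, so injectivity of your parametrization fails and must be repaired by hand. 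The paper sidesteps this entirely by perturbing the \emph{exponents} rather than the blocks: it takes all non-decreasing sequences $(\alpha'_n)$ with $\sup\{\omega^{\alpha'_n}+1\}=\mu:=\sup\{\omega^{\alpha_n}+1\}$; equimorphy is clear, and pairwise non-isomorphism is immediate from the uniqueness of Jullien's decomposition (Proposition 3.3.2 of his thesis, quoted in Section 3), with no bookkeeping. That is the missing idea.

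Your upper bound for Case (3) is moreover internally inconsistent: you assert that a sibling decomposes as $\sum^{*}_n D_n$ with $D_n\equiv\omega^{\alpha_n}$ and $D_n$ an ordinal, and then write $D_n=\omega^{\alpha_n}+\delta_n$ with $\delta_n$ possibly nonzero --- but equimorphic ordinals are equal, so your own premises force $\delta_n=0$. The underlying error is assuming the blocks of a sibling align with those of $C$; the double-embedding argument of Proposition \ref{prop:finitesumord} is specific to \emph{finite} sums, and Example \ref{exa:infinitesumord} is precisely the counterexample for $\omega^{*}$-sums. The correct picture is that a sibling, being itself a surordinal, has its \emph{own} Jullien form $\sum^{*}_n\omega^{\alpha'_n}+D'$ with a possibly different exponent sequence of the same supremum. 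Finally, your reduction ``after discarding a possible finite ordinal tail'' is wrong when $C$ is not pure: there the tail $D$ has leading exponent $\beta_0\geq\alpha_n+1$ for all $n$, hence $\beta_0\geq\sup_n\alpha_n$, so $D$ is not finite and does not embed into the pure part; the non-pure subcase needs the separate argument the paper gives (an isomorphism forces final segments of the exponent sequences to coincide, which still leaves $\vert C'\vert^{\aleph_0}$ types).
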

 
The following result describe the scattered chains with few siblings. 
 
\begin{theorem}\label{thm:scattered} %thm2
Let C be any chain and $\kappa < \CC$.  Then the following are equivalent:
\begin{enumerate}
\item $\sib(C)=\kappa$ and $C$ is scattered;
\item $\kappa=1$, or $\kappa\geq \aleph_0$ and $C$ is a finite sum 
of surordinals and of reverse of surordinals, and if $C=\sum_{j<m}
D_j$ is such a sum with $m$ minimum then $max\{sib (D_j): j<m\}=
\kappa$.
\end{enumerate}
\end{theorem}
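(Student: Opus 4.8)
The strategy is to derive this theorem as a synthesis of Propositions \ref{prop:manysibling}, \ref{prop:finitesum}, and \ref{prop:siblingsurordinal}, together with Proposition \ref{prop:finitesumord} and Theorem \ref{thm:1orinfinite}; the work is in assembling these correctly rather than in new constructions. I would prove the two implications separately, keeping $\kappa<\CC$ fixed throughout.

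For (2) $\Rightarrow$ (1): if $\kappa=1$, then $C$ being a chain with $\sib(C)=1$ is scattered by the contrapositive of Corollary \ref{cor:smallsib} once we know $C$ is countable — but in fact we should observe that a finite sum of surordinals and their reverses is automatically scattered (a surordinal embeds neither $1+\omega^*$, and the class of scattered orders is closed under finite sums), so in both sub-cases $C$ is scattered. If $\kappa\geq\aleph_0$ and $C=\sum_{j<m}D_j$ is a minimal-length sum of surordinals and reverses of surordinals with $\max\{\sib(D_j):j<m\}=\kappa$, I would first argue $\sib(C)\geq\kappa$ by lifting siblings of the ``heaviest'' block $D_{j_0}$ to siblings of $C$ (replace $D_{j_0}$ by a non-isomorphic sibling, leaving the other blocks fixed; minimality of $m$ is what keeps these siblings of $C$ pairwise non-isomorphic, via a decomposition/uniqueness argument in the spirit of the proof of Proposition \ref{prop:finitesumord}). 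For the reverse inequality $\sib(C)\leq\kappa$, I would show every sibling $C'$ of $C$ again decomposes as $\sum_{j<m}D_j'$ with $D_j'$ a sibling of $D_j$ — this is the delicate point, see below — and then count: the number of such $C'$ up to isomorphism is at most $\prod_{j<m}\sib(D_j)$, which is a finite product of cardinals each $\leq\kappa$, at least one equal to $\kappa$, and since $\kappa\geq\aleph_0$ this product is exactly $\kappa$.

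For (1) $\Rightarrow$ (2): assume $C$ is scattered with $\sib(C)=\kappa<\CC$. If $\kappa=1$ we are done, so suppose $\kappa\geq 2$; by Theorem \ref{thm:1orinfinite} then $\kappa\geq\aleph_0$. Since $\sib(C)<\CC$, Proposition \ref{prop:manysibling} forces that neither $(\omega^*+\omega)\cdot\omega$ nor $(\omega^*+\omega)\cdot\omega^*$ embeds into $C$, so by Proposition \ref{prop:finitesum} $C$ is a finite sum of surordinals and reverses of surordinals; take such a sum $C=\sum_{j<m}D_j$ with $m$ minimal. Using the decomposition-of-siblings fact again, every sibling of $C$ has the form $\sum_{j<m}D_j'$ with $D_j'\equiv D_j$, so $\sib(C)\leq\prod_{j<m}\sib(D_j)$; since this is finite-valued-product and $\sib(C)\geq\aleph_0$, we get $\max_j\sib(D_j)\geq\aleph_0$. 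Conversely the lifting argument gives $\sib(C)\geq\sib(D_{j_0})$ for each $j_0$, hence $\sib(C)\geq\max_j\sib(D_j)$; and the product bound with all factors $\leq\max_j\sib(D_j)$ and that max infinite gives $\sib(C)\leq\max_j\sib(D_j)$. Therefore $\max\{\sib(D_j):j<m\}=\kappa$, which is exactly clause (2). (One also notes each $\sib(D_j)<\CC$ since $\sib(D_j)\leq\sib(C)=\kappa<\CC$, so Proposition \ref{prop:siblingsurordinal} applies to each block and the possible values of $\kappa$ are controlled, although the statement as written does not demand we enumerate them.)

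The main obstacle is the structural lemma that a sibling $C'$ of a minimal finite sum $C=\sum_{j<m}D_j$ of surordinals and reverses of surordinals must itself split as $\sum_{j<m}D_j'$ with $D_j'\equiv D_j$ block-by-block. This requires showing that the embeddings witnessing $C'\leq C$ and $C\leq C'$ interact well with the canonical ``blocks'' — one needs that a surordinal block cannot be split across a boundary or merged with a neighbouring reverse-surordinal block, which is precisely where the non-embeddability of $(\omega^*+\omega)\cdot\omega$ and its reverse (equivalently, the surordinal/reverse-surordinal alternation structure) is used, refining the counting argument from the proof of Proposition \ref{prop:finitesumord}. Once that lemma is in hand, the rest is bookkeeping with finite products of infinite cardinals.
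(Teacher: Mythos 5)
Your high-level architecture matches the paper's: reduce to a finite sum of surordinals and reverse surordinals via Propositions \ref{prop:manysibling} and \ref{prop:finitesum}, then show $\sib(C)=\max_j\sib(D_j)$. But there are two genuine problems. First, your appeal to Theorem \ref{thm:1orinfinite} to get from $\kappa\geq 2$ to $\kappa\geq\aleph_0$ is circular: in the paper that dichotomy is \emph{deduced from} Theorem \ref{thm:scattered} (together with Theorem \ref{thm:fullchar}), not available before it. The paper instead gets ``$1$ or infinite'' for scattered chains as an output of this very proof, from the fact (Proposition \ref{prop:siblingsurordinal}) that each indecomposable block has $\sib$ equal to $1$ or an infinite cardinal, so the max is $1$ or infinite. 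This is repairable, but not by the route you chose.

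Second, and more seriously, the entire proof rests on your ``decomposition-of-siblings'' lemma for the surordinal blocks --- that every $C'\equiv C$ splits as $\sum_{j<m}D_j'$ with $D_j'\equiv D_j$, and that the lifted siblings are pairwise non-isomorphic --- and you explicitly leave it unproved. This is not an off-the-shelf fact: the $D_j$ are in general \emph{decomposable} (e.g.\ $\omega^*+\omega^2$ is a surordinal but not indecomposable), so the block-rigidity result that actually exists, Lemma \ref{indec1}, which applies to a \emph{minimal decomposition into indecomposables}, does not apply to the $D_j$'s. The paper's proof works around exactly this: it refines $C$ to a minimal sum $\sum_{i<n}C_i$ of indecomposables, applies Lemma \ref{indec1} and the product formula of Corollary \ref{cor:comput}, converts the product to a max via Proposition \ref{prop:siblingsurordinal}, and then transfers the count back to the $D_j$'s. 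Separately, the inequality $\sib(D_j)\leq\sib(C)$ (your ``lifting'') is obtained not by direct substitution into the $D_j$-sum but by passing to the $\equiv_{well}$ and $\equiv_{well^*}$ condensation classes $\overline{D_j}$, using Lemma \ref{lem:meet} to control how adjacent classes overlap and Lemma \ref{lem:numbersibling} to get $\sib(\overline{D_j})\leq\sib(C)$, together with an argument that $\sib(D_j)=\sib(\overline{D_j})$. Without some substitute for this machinery, both inequalities $\sib(C)\leq\prod_j\sib(D_j)$ and $\sib(C)\geq\max_j\sib(D_j)$ in your plan are unsupported, so the proposal as written has a gap at its central step.
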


This immediately yields the following interesting
corollary since, by Corollary \ref{cor:smallsib}, countable chains $C$
such that $\sib(C)<\CC$ must be scattered.

\begin{corollary} \label{cor:sib1}
When $C$ is countable, then $\sib(C)=1$,  $\aleph_0$, or  $\CC$.
\end{corollary}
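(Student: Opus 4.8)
The plan is to derive the corollary directly from Corollary~\ref{cor:smallsib}, Theorem~\ref{thm:scattered}, and Proposition~\ref{prop:siblingsurordinal}, reducing the whole question to the number of siblings of a countable surordinal. Let $C$ be a countable chain. If $\sib(C)=\CC$ there is nothing to prove, so assume $\sib(C)<\CC$. Then Corollary~\ref{cor:smallsib} tells us that $C$ is scattered, and we may apply Theorem~\ref{thm:scattered} with $\kappa=\sib(C)$: either $\kappa=1$, and we are done, or $\kappa\geq\aleph_0$ and $C$ has a decomposition $C=\sum_{j<m}D_j$ as a finite sum of surordinals and reverses of surordinals, with $m$ minimum, such that $\max\{\sib(D_j):j<m\}=\kappa$.

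The remaining work is to show that this forces $\kappa=\aleph_0$; that is, we must exclude uncountable cardinals $\kappa$ strictly below the continuum. Since each $D_j$ embeds in the countable chain $C$, each $D_j$ is countable, and (using $\sib(\CA)=\sib(\CA^{*})$) we may assume each $D_j$ is itself a surordinal. By Proposition~\ref{prop:siblingsurordinal}, $\sib(D_j)$ is either $1$, or $|D_j|$, or $|D_j'|^{\aleph_0}$ for a component $D_j'$ of $D_j$. In the first two cases $\sib(D_j)\leq|D_j|\leq\aleph_0$. In the third case $D_j'$, being a pure surordinal, can be written as $\sum^{*}_{n<\omega}C_n$ with each $C_n$ of order type $\omega^{\alpha_n}$; in particular $D_j'$ is infinite, so $|D_j'|=\aleph_0$ and $\sib(D_j)=\aleph_0^{\aleph_0}=\CC$. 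Because $\sib(C)<\CC$, this third case is impossible, so $\sib(D_j)\leq\aleph_0$ for every $j<m$; hence $\kappa=\max\{\sib(D_j):j<m\}\leq\aleph_0$, and with $\kappa\geq\aleph_0$ this gives $\kappa=\aleph_0$. In all cases $\sib(C)\in\{1,\aleph_0,\CC\}$.

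There is no real obstacle here once the earlier results are in place; the one point to be careful about is the observation that a component is always infinite (being a pure surordinal, hence a nontrivial $\omega$-indexed sum), which is precisely what collapses the intermediate value $|D_j'|^{\aleph_0}$ to $\CC$ and thereby rules out any value of $\sib(C)$ strictly between $\aleph_0$ and $\CC$.
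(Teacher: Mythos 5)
Your proof is correct and follows the same route as the paper, which simply declares the corollary immediate from Corollary~\ref{cor:smallsib} and Theorem~\ref{thm:scattered}. Your extra step --- invoking Proposition~\ref{prop:siblingsurordinal} to note that a component, being an $\omega^*$-indexed sum, is infinite, so the only possible sibling counts of the countable pieces $D_j$ are $1$, $\aleph_0$, or $\CC$ --- is exactly the detail needed to exclude cardinals strictly between $\aleph_0$ and $\CC$, a point the paper leaves implicit.
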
 

With an emphasis on the indecomposable components of $C$, we can also
prove a more general structure result when the number of siblings is
less than the continuum.

\begin{corollary}\label{cor:corollary2}
Let $C$ be a chain. Then:
\begin{enumerate} 
\item $C$ is scattered and $\sib(C)= \kappa < \CC$  
if and only if $C$ is a finite sum $\sum_{i<n} C_i$ of ordinals,
surordinals of the form $\omega^{\alpha}\cdot\omega^*+ \omega^{\beta}$
with $ \alpha+1 \leq \beta$, surordinals of the form
$\omega^{\alpha}\cdot\omega^*$ and reverse of such chains. \\
Furthermore if the number of components $C_i$ of this sum such that
$C_i$ or its reverse is of the form $\omega^{\alpha} \cdot \omega^*$ with
$\alpha\geq 1$ is minimum, then $\kappa$ is the maximum cardinality of
these components.
\item $\sib(C)$ is finite and $C$ is scattered if and only if  $C$ is a 
finite sum of ordinals, surordinals of the form
$\omega^{\alpha} \cdot \omega^*+ \omega^{\beta}$ with $ \alpha+1 \leq
\beta$, and their reverse. In which case, $\sib(C)=1$.
\end{enumerate} 
\end{corollary}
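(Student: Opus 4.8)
The plan is to read off Corollary~\ref{cor:corollary2} from Theorem~\ref{thm:scattered} and the sibling count for surordinals (Proposition~\ref{prop:siblingsurordinal}), using Propositions~\ref{prop:manysibling} and~\ref{prop:finitesum} to obtain the initial decomposition. Suppose first that $C$ is scattered with $\sib(C)=\kappa<\CC$. By the contrapositive of Proposition~\ref{prop:manysibling}, neither $(\omega^*+\omega)\cdot\omega$ nor $(\omega^*+\omega)\cdot\omega^*$ embeds into $C$, so by Proposition~\ref{prop:finitesum} $C=\sum_{j<m}D_j$ is a finite sum of surordinals and reverse surordinals; fix $m$ minimum, so that Theorem~\ref{thm:scattered} gives $\kappa=\max_{j<m}\sib(D_j)$, and in particular $\sib(D_j)<\CC$ for every $j$.

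Next I classify each summand $D_j$ via Proposition~\ref{prop:siblingsurordinal}, say $D_j$ is a surordinal (the reverse case is symmetric). Case~(3) cannot occur, since there $\sib(D_j)=|C'|^{\aleph_0}$ for a component $C'$ which is an infinite pure surordinal, forcing $\sib(D_j)\ge\aleph_0^{\aleph_0}=\CC$. In case~(1), $D_j$ is an ordinal, is $\omega^*$, or is $\omega^{\alpha}\cdot\omega^*+\omega^{\beta}+\gamma$ with $\alpha+1\le\beta$ and $\gamma$ an ordinal; in the last situation write it as $\bigl(\omega^{\alpha}\cdot\omega^*+\omega^{\beta}\bigr)+\gamma$. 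In case~(2), $D_j$ is pure with stationary sequence, i.e.\ $D_j=\sum^{*}_{n<\omega}\omega^{\alpha_n}$ with $(\alpha_n)$ eventually equal to some $\alpha\ge1$; splitting off that constant tail yields $D_j\cong(\omega^{\alpha}\cdot\omega^*)+\delta$ for an ordinal $\delta$. Collecting all pieces (and their reverses) exhibits $C$ as a finite sum of ordinals, of chains $\omega^{\alpha}\cdot\omega^*+\omega^{\beta}$ with $\alpha+1\le\beta$, of chains $\omega^{\alpha}\cdot\omega^*$, and of reverses of these, with $\omega^*$ and the reverse ordinals absorbed into the list. This proves the forward implication of~(1).

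For the converse implication of~(1), such a sum $C=\sum_{i<n}C_i$ is scattered, and by Proposition~\ref{prop:siblingsurordinal} each summand has $\sib(C_i)<\CC$, namely $\sib(C_i)=1$ for the ordinal and the $\omega^{\alpha}\cdot\omega^*+\omega^{\beta}$ summands and $\sib(C_i)=|\omega^{\alpha}|=|C_i|$ for the $\omega^{\alpha}\cdot\omega^*$ summands (symmetrically for reverses). Modifying one summand at a time to a sibling yields a sibling of $C$, so $\sib(C)\ge\max_i\sib(C_i)$; and since each $C_i$ is itself a single surordinal or reverse surordinal, grouping consecutive $C_i$'s into a minimal decomposition into surordinals and reverses and applying Proposition~\ref{prop:siblingsurordinal} to each group shows that its sibling number is the maximum of the $\sib(C_i)$ occurring in it, whence $\sib(C)=\max_i\sib(C_i)<\CC$ by Theorem~\ref{thm:scattered}. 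In any such representation the summands carrying more than one sibling are exactly the chains $\omega^{\alpha}\cdot\omega^*$ (and their reverses) with $\alpha\ge1$, each with $\sib(C_i)=|C_i|$; hence if their number is made minimum then $\kappa=\sib(C)$ equals the largest among their cardinalities when $\kappa\ge\aleph_0$, while if $\kappa=1$ none of them occurs. The latter case gives~(2): if $\sib(C)$ is finite and $C$ is scattered, part~(1) applies, a minimal-count representation contains no $\omega^{\alpha}\cdot\omega^*$ summand with $\alpha\ge1$ (one such would force $\sib(C)\ge\aleph_0$), so $C$ is a finite sum of ordinals, of chains $\omega^{\alpha}\cdot\omega^*+\omega^{\beta}$ with $\alpha+1\le\beta$, and of reverses, with $\sib(C)=\max_i\sib(C_i)=1$; the converse of~(2) is the same computation.

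The main work will be the bookkeeping in the converse: verifying that when consecutive summands of the restricted forms are grouped into a minimal surordinal/reverse decomposition, the resulting block is again a surordinal or reverse and its sibling number equals the maximum of the $\sib(C_i)$ in the block, so that Theorem~\ref{thm:scattered}'s identity $\kappa=\max_j\sib(D_j)$ transfers to the restricted-form decomposition; the minimum-count clause is precisely what resolves the residual freedom to rewrite $\omega^{\alpha}\cdot\omega^*+\omega^{\beta}$ either as a single summand or as $\omega^{\alpha}\cdot\omega^*$ followed by an ordinal.
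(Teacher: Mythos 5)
Your overall route is the same as the paper's: get the minimum-length decomposition $C=\sum_{j<m}D_j$ into surordinals and reverses from Theorem~\ref{thm:scattered}, read off $\kappa=\max_j\sib(D_j)$, classify each $D_j$ by Proposition~\ref{prop:siblingsurordinal} (case~(3) excluded since $|C'|^{\aleph_0}\geq\CC$), and split each $D_j$ into the restricted forms. That forward half is sound. But the converse contains a genuine error. You assert that for an arbitrary sum $\sum_{i<n}C_i$ of the stated forms, ``modifying one summand at a time to a sibling yields a sibling of $C$, so $\sib(C)\geq\max_i\sib(C_i)$,'' and later that a grouped surordinal block has sibling number equal to the maximum of the $\sib(C_i)$ it contains. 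Both claims are false. Take $C=\omega\cdot\omega^*+\omega^2$ with $C_0=\omega\cdot\omega^*$ and $C_1=\omega^2$: by Example~\ref{exa:infinitesumord} (and $\sib(\CA)=\sib(\CA^*)$) one has $\sib(C_0)=\aleph_0$, its siblings being $\omega\cdot\omega^*+k$ for $k<\omega$; yet replacing $C_0$ by any of these gives $\omega\cdot\omega^*+k+\omega^2\cong C$, and indeed $\sib(C)=1$ by Proposition~\ref{prop:siblingsurordinal}(1) with $\alpha=1\leq 1+1\leq\beta=2$. The appended ordinal absorbs exactly the variation that distinguishes the siblings of $\omega^{\alpha}\cdot\omega^*$, so the identity $\sib(C)=\max_i\sib(C_i)$ holds only for decompositions in which the number of $\omega^{\alpha}\cdot\omega^*$-type components is minimum --- which is precisely why the ``Furthermore'' clause carries that hypothesis. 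Your closing paragraph defers this as ``bookkeeping,'' but the statement you propose to verify there is the false one.

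The repair is what the paper actually does: work only with the minimum-$m$ surordinal decomposition. In the converse, each component $C_i=\omega^{\alpha}\cdot\omega^*$ with $\alpha\geq 1$ is indecomposable, hence embeds into a single $D_j$, and the minimality of $n$ (the minimum-count hypothesis) forces that $D_j=\omega^{\alpha}\cdot\omega^*+\gamma$ with $\gamma<\omega^{\alpha+1}$, i.e.\ a \emph{pure} surordinal with stationary sequence; only then does Proposition~\ref{prop:siblingsurordinal}(2) give $\sib(D_j)=|\omega^{\alpha}\cdot\omega^*|$, while all remaining $D_j$ have $\sib(D_j)=1$, and Theorem~\ref{thm:scattered} converts $\max_j\sib(D_j)$ into $\sib(C)$. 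You need this indecomposability-plus-minimality step explicitly; without it the inequality $\sib(C)\geq\max_i\sib(C_i)$ on which your converse rests simply fails.
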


We can also prove a more general structure result when the number of
siblings is less than the continuum.

\begin{theorem}\label{thm:fullchar}
Let $C$ be any chain and $\kappa < \CC$. Then the following are equivalent:
\begin{enumerate}
\item $\sib(C)=\kappa $.
\item  $C = \sum_{i \in D} C_i$, where:
	\begin{itemize}
	\item $D$ is dense (singleton or infinite),
	\item each $C_i$ is scattered,
	\item $\sib(C_i)=1$ for all but finitely many $i \in D$, 
	\item $max\{\sib(C_i): i \in D\} = \kappa$, and 
	\item every embedding $f:C \rightarrow C$ preserves each $C_i$. 
	\end{itemize}
\end{enumerate}
\end{theorem}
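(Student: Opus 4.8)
The plan is to work throughout with the scattered condensation of $C$: setting $x\sim y$ when the interval between $x$ and $y$ is scattered, the quotient $D=C/\!\!\sim$ is a singleton or a dense chain, and $C=\sum_{i\in D}C_i$ with each $C_i$ a maximal scattered interval. This is moreover the only way to write $C$ as a sum, over a singleton or a dense chain, of scattered pieces, so the decomposition occurring in (2) must be this one. When $D$ is a singleton the theorem is immediate: then $C$ is scattered, the decomposition is tautological, the remaining clauses of (2) hold vacuously or trivially, and (2) just says ``$C$ is scattered and $\sib(C)=\kappa$''. So all the work is in the dense case, where one claim does the job.

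First I would record two routine facts for $D$ dense. (a) Every embedding $g\colon C\to C$ maps each $C_i$, a scattered interval, into a single block $C_{\tau(i)}$, and $i\mapsto\tau(i)$ is an order-embedding of $D$; thus ``$g$ preserves each $C_i$'' means precisely ``$\tau=\mathrm{id}_D$''. (b) If every self-embedding of $C$ preserves each block, the same holds for every sibling $C'$ of $C$: the equimorphism between $C$ and $C'$ induces order-embeddings between the two index chains whose composites through $C$ are self-embeddings of $C$ and so induce $\mathrm{id}$, which forces $D'\cong D$ and, after identifying, that the equimorphism carries $C_i$ and $C'_i$ into one another (so $C'_i\equiv C_i$) and that any isomorphism between two such siblings induces $\mathrm{id}_D$. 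Granting the claim below, (b) shows the siblings of $C$ are, up to isomorphism, exactly the chains $\sum_{i\in D}C'_i$ with $C'_i\equiv C_i$, two of them isomorphic iff $C'_i\cong C''_i$ for all $i$; hence $\sib(C)=\prod_{i\in D}\sib(C_i)$. If this is $<\CC$ then, $D$ being infinite, all but finitely many factors are $1$, and the remaining finite product of cardinals --- each $1$ or, by Theorem~\ref{thm:1orinfinite}, at least $\aleph_0$ --- equals $\max_i\sib(C_i)$. Read in both directions this settles $(2)\Rightarrow(1)$ completely (for (2), block-preservation of self-embeddings is one of the hypotheses, and when $\kappa\ge\aleph_0$ the $\kappa$ siblings realizing $\max_i\sib(C_i)$ come from varying a single block over its siblings not isomorphic to $C_i$), and reduces $(1)\Rightarrow(2)$ to the key claim: \emph{if $C$ is non-scattered and $\sib(C)<\CC$, then every self-embedding of $C$ preserves each block.}

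I expect this claim to be the main obstacle, and would prove its contrapositive. Suppose $f\colon C\to C$ has $\tau(i_0)\ne i_0$; replacing $C$ by $C^{*}$ if needed (legitimate since $\sib(C)=\sib(C^{*})$) assume $\tau(i_0)>i_0$. With $I_0=\sum_{i<i_0}C_i$, $A_0=\sum_{i_0\le i<\tau(i_0)}C_i$ and $T'=\sum_{i\ge\tau(i_0)}C_i$ one has $C=I_0+A_0+T'$; since $D$ is dense the index set $(i_0,\tau(i_0))$ is an infinite dense order, so $A_0$ is non-scattered and contains a copy of $\BQ$, while $f$ carries the final segment $A_0+T'$ into $T'$, giving $A_0+T'\le T'$, hence $\BQ+T'\le T'$, and --- $\BQ$ being universal among countable chains --- $W+T'\le T'$ for every countable chain $W$. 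In particular $C\equiv I_0+\BQ+T'$. Now for each $X\subseteq\omega$ let $W(X)=\sum_{n<\omega}\BZ^{\chi_n}$ be the scattered chain of Example~\ref{exa:ccsiblings} ($\BZ^{1}=\BZ$, $\BZ^{0}$ a point), and put
\[
  C(X)=I_0+\BQ+W(X)+\BQ+T'.
\]
Since $\BQ+W(X)+\BQ$ is countable and contains $\BQ$ it is equimorphic to $\BQ$, so $C(X)\equiv I_0+\BQ+T'\equiv C$.

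It remains to separate these siblings. Because every initial and every final segment of a copy of $\BQ$ is non-scattered, no point of $W(X)$ is $\sim$-related to a point of the flanking copies of $\BQ$; being scattered, $W(X)$ is therefore exactly one block of the maximal scattered condensation of $C(X)$, occupying a fixed position of an index chain that does not depend on $X$. Hence an isomorphism $C(X)\cong C(Y)$ would induce a block-type-preserving automorphism of that index chain carrying the block $W(X)$ onto a block isomorphic to it --- either the one occupied by $W(Y)$, whence $W(X)\cong W(Y)$ and $X=Y$, or one of the scattered blocks $C_i$ of $C$. Since distinct subsets give non-isomorphic $W(X)$'s, at most one $X$ can have $W(X)\cong C_i$ for any given $i$; letting $X$ range over a family of $\CC$ subsets of $\omega$ avoiding these exceptions yields $\CC$ pairwise non-isomorphic siblings of $C$, contradicting $\sib(C)<\CC$. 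The delicate point is exactly this distinguishing step --- guaranteeing that the condensation genuinely records $W(X)$ and that enough subsets survive (a little extra internal padding of $W(X)$, and a separate treatment of the case $|C|\ge\CC$, may be needed).
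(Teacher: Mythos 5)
Your overall route is the one the paper takes: the scattered condensation $C=\sum_{i\in D}C_i$, the reduction of everything to the single claim that no self-embedding of a chain with fewer than $\CC$ siblings moves a point across a non-scattered interval, and the witnesses $I_0+\BQ+W+\BQ+T'$ built from $\CC$ pairwise non-isomorphic countable scattered chains $W$ --- this is precisely the paper's Lemma~\ref{lem:interval}, down to the device of padding the inserted scattered chain by two copies of $\BQ$. Two small repairs before the main point. Your ``routine fact (a)'' is false as stated: an embedding need not map a block $C_i$ into a single block (take $C_i$ to be two points whose images straddle a copy of $\BQ$), so $\tau$ need not exist; the usable formulation is that ``$f$ preserves each block'' is equivalent to ``for every $x$ the interval between $x$ and $f(x)$ is scattered'', and the contrapositive should start from one point $x$ with $(x,f(x))$ non-scattered rather than from $\tau(i_0)\neq i_0$ --- after which your construction goes through verbatim. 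Also, your citation of Theorem~\ref{thm:1orinfinite} is circular, since that theorem is deduced from the present one; what you need is Theorem~\ref{thm:scattered}, which suffices because the $C_i$ are scattered.

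The genuine gap is exactly the one you flag: separating the $C(X)$. Your argument only handles an isomorphism $C(X)\cong C(Y)$ that carries the block $W(X)$ onto the block $W(Y)$; when it carries $W(X)$ onto some block $C_j$ of $C$ you propose to discard $X$, but there may be $|C|\geq\CC$ such blocks $C_j$, so discarding can leave no $X$ at all, and neither internal padding nor a case split on $|C|$ obviously helps, since $C$ may genuinely contain blocks isomorphic to every $W(X)$. The paper's resolution of this in the proof of Lemma~\ref{lem:interval} is a different idea, not a refinement of the counting: it takes the inserted chains $S_\alpha$ with $\sib(S_\alpha)>1$, and shows that if an isomorphism $g:C_\alpha\to C_\beta$ fails to match the inserted blocks, then $g$ restricts to a self-embedding of an initial or final segment, and iterating $g$ from a point of $S_\alpha$ produces infinitely many blocks isomorphic to $S_\alpha$, hence infinitely many blocks with at least two siblings; Lemma~\ref{lem:sib} (vary infinitely many such blocks independently and count the resulting labellings via Lemma~\ref{lem:labelling}) then gives $2^{\aleph_0}$ siblings anyway. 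In other words, either the $C(X)$ are pairwise non-isomorphic, or the very failure of that forces $\sib(C)\geq\CC$ by another mechanism. Your chains $W(X)$ (for infinite $X$) do have more than one sibling, so they would fit into this scheme, but the iteration-plus-Lemma~\ref{lem:sib} dichotomy is the missing idea; the remainder of your argument (block preservation passing to siblings, and $\sib(C)$ computed blockwise) matches the paper's.
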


Theorems \ref{thm:scattered} and \ref{thm:fullchar}
immediately prove Theorem \ref{thm:1orinfinite}. 

\begin{proof}[of Theorem \ref{thm:1orinfinite}]

If $C$ is  a chain such that  $\sib(C)=\kappa<\aleph_0$, then writing  $C=\sum_{i \in D} C_i$ as in Theorem
\ref{thm:fullchar}, we must have $\sib(C_i)=1$ for each $i \in D$ by Theorem \ref{thm:scattered} since each 
$C_i$ is scattered, and thus $\kappa = 1=\sib(C)$ from Theorem
\ref{thm:fullchar}. \qed
\end{proof}

\medskip

\noindent We also remark that the dense set $D$ in Theorem \ref{thm:fullchar}
does not have to be embedding rigid even when $\sib(C)=1$; in fact even $D =
\BR$ is possible in that case. Indeed Dushik and Miller \cite{key-DM} (see also
Rosenstein \cite{key-R}) showed that $\BR$ can be decomposed into two
disjoint dense subsets $E$ and $F$ such that $g(E) \cap F \neq
\emptyset$ and $g(F) \cap E \neq
\emptyset$ for any non-identity order preserving map $g:\BR
\rightarrow \BR$. Thus if $C=\sum_{i \in \BR} C_i$, where:
\[ \left\{ 
	\begin{array}{l} 
	|C_i|=2 \mbox{ if }  i \in E \\
	|C_i|=1 \mbox{ if }  i \not  \in E  \; (i \in F),\\
	\end{array}
	\right.  \]
then $C$ itself is embedding rigid. This is because given any order
preserving map $f:C \rightarrow C$, define a function $\phi: \BR
\rightarrow \powerset \BR $ such that $\phi(i)=\{j \in \BR: f(C_i) \cap C_j \neq
\emptyset \}$. Hence we can define $\overline{\phi(i)}$ as the
interval of $\BR$ determined by $\phi(i)$. Now we may define an order
preserving map $g:\BR \rightarrow \BR$ by:
\[ i \rightarrow \left\{ 
	\begin{array}{l} 
	j \mbox{ if }  f(C_i) \subseteq C_j \\
	\mbox{arbitrary } j \in E \cap \overline{\phi(i)} \mbox{ otherwise. }\\
	\end{array}
	\right. \]
But then $g(E)\subseteq E$ and hence $g$ is the identity map by
assumption, and this immediately implies that $f$ is the identity as well. 

\medskip 
On the other hand we can show that the dense set $D$ in Theorem \ref{thm:fullchar}
cannot be countably infinite.

\begin{proposition}\label{prop:allctble}
If $C = \sum_{i \in D} C_i$ where each $C_i$ is scattered and $D$ is a
countably infinite dense chain, then $\sib(C) \geq \CC$.
\end{proposition}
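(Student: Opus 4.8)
The plan is to exhibit continuum many pairwise non-isomorphic siblings of $C$ by exploiting the fact that $D$, being a countably infinite dense chain, contains a copy of $\mathbb{Q}$, and more importantly that $\mathbb{Q}$ embeds into any nonempty open interval of $D$. First I would fix a countable dense subset $D_0 \subseteq D$ order-isomorphic to $\mathbb{Q}$, together with a strictly increasing $\omega$-sequence $(d_n)_{n<\omega}$ in $D$ (possible since $D$ is infinite and, being dense, has no maximum once we pass to a suitable interval — or we simply choose any strictly increasing sequence, which exists because $D$ is infinite and linearly ordered, after replacing $D$ by a cofinal or interior copy). The key structural observation is the one already used in Example \ref{exa:ccsiblings}(2): since $D$ is dense, for any rearrangement we can write $C$ as a sum indexed by a chain still embeddable in $D$, and conversely $C$ absorbs extra scattered pieces inserted at points of $D$ without changing its equimorphy type.

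The main step is the following absorption argument. Because $D$ is dense, between any two of its points lies a copy of $\mathbb{Q}$, hence also a copy of $D$ itself restricted to an interval; this lets us "double up" the indexing. Concretely, for each infinite $X \subseteq \omega$ I would like to build a chain $C(X)$ that is equimorphic to $C$ but records $X$ in a way invariant under isomorphism. Take one fixed scattered summand, say $C_{i_0}$ with $i_0 \in D$; if every $C_i$ has at most one element the statement reduces to showing $\sib(D) \geq \CC$ for countable dense $D$, which follows from Corollary \ref{cor:smallsib} since such $D$ is a non-scattered countable chain. So assume some $C_{i_0}$ is a nontrivial scattered chain. Along the increasing sequence $(d_n)$ inside $D$, insert a block $B_n$ at position just after $d_n$, where $B_n$ is a copy of $C_{i_0}$ if $n \in X$ and a single point otherwise, obtaining $C(X)$. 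Density of $D$ guarantees $C(X) \leq C$: each inserted block sits in an interval of $D$ which contains a copy of $\mathbb{Q}$, hence a copy of any countable scattered chain, in particular of $C_{i_0}$ and of the relevant neighbouring summands, so the rearranged sum re-embeds. The reverse embedding $C \leq C(X)$ is immediate since $C$ is a substructure of $C(X)$ (we only added points). Thus $C \equiv C(X)$.

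The final step is to distinguish the $C(X)$ up to isomorphism, and this is where the hypothesis that each $C_i$ is \emph{scattered} is essential — it is also the step I expect to be the main obstacle. The difficulty is that an isomorphism between $C(X)$ and $C(Y)$ need not respect the chosen decomposition, so I must extract an isomorphism invariant that genuinely depends on $X$. The natural candidate is the Hausdorff rank / the pattern of maximal scattered intervals: in $C(X)$, the points $d_n$ with $n \in X$ are precisely those that lie inside a maximal scattered interval of a prescribed isomorphism type (roughly, one that looks like a copy of $C_{i_0}$ flanked appropriately), whereas for $n \notin X$ the local picture is the "thin" one. Since $D$ itself is densely ordered, the global chain $C(X)$ has a canonical condensation whose quotient recovers $D$ with a distinguished subset coded by $X$ up to the automorphisms of $D$; to get a clean $\CC$ of isomorphism types I would instead code $X$ by varying the \emph{isomorphism types} of the inserted blocks $B_n$ over an infinite family of pairwise non-equimorphic countable scattered chains (e.g. $\omega^{k}$ for $k < \omega$, whose ranks are invariant), choosing $B_n \cong \omega^{f(n)}$ for an arbitrary $f : \omega \to \omega$. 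Two such chains $C(f)$ and $C(g)$ with $f \neq g$ cannot be isomorphic: the multiset of Hausdorff ranks occurring among the "large" scattered intervals that are cofinal with $(d_n)$ is an isomorphism invariant, and it differs. Since there are $\CC$ many functions $f : \omega \to \omega$ (indeed already $2^{\aleph_0}$ using just two values with the rank argument), this yields $\sib(C) \geq \CC$. The obstacle, concretely, is pinning down precisely which isomorphism invariant survives arbitrary rearrangements allowed by density of $D$; the resolution is to make the inserted blocks' invariants (ranks) large enough and the insertion points $(d_n)$ order-theoretically identifiable (e.g. cofinal in a fixed copy of $\omega$ that is a maximal such inside the relevant interval), so no isomorphism can scramble the coding.
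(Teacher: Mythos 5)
Your plan founders at the step you pass over most quickly: the claim that $C(X)\leq C$. In the first version of your construction you insert copies of $C_{i_0}$, a summand that need not be countable, and justify the re-embedding by the presence of copies of $\BQ$ inside intervals of $D$; but $\BQ$ only absorbs \emph{countable} chains. Concretely, take $D=\BQ$, $C_0=\omega_1$ and $C_q$ a singleton for $q\neq 0$. If $X$ is infinite your $C(X)$ contains two disjoint, separated copies of $\omega_1$, and $\omega_1+1+\omega_1$ does not embed into $\BQ+\omega_1+\BQ$ (an uncountable image of the first copy is cofinal in the middle $\omega_1$, leaving only a countable set above it for the second copy). So $C(X)\not\leq C$ and the construction fails outright. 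Your fallback with countable blocks $\omega^{f(n)}$ repairs this particular objection but not the underlying one: since $D$ is dense there is no gap immediately after $C_{d_n}$, so to make room for $B_n$ any embedding of $C(f)$ into $C$ must displace the entire tail $\sum_{i>d_n}C_i$ into a proper tail $\sum_{i\geq c}C_i$, skipping the summands indexed by $(d_n,c)$. For an arbitrary assignment $i\mapsto C_i$ and an arbitrarily chosen sequence $(d_n)$ there is no reason such a displacement exists; finding a region of $D$ where it does is precisely the hard content of the proposition.

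That is where the paper's proof spends its effort, by a route your proposal does not touch. One first applies Lemma~\ref{lem:sib} to dispose of the case where some $C_i$ has $\CC$ siblings or infinitely many $C_i$ have more than one sibling; in the remaining case Theorem~\ref{thm:scattered} forces each $C_i$ to be a finite sum of surordinals and reverse surordinals, so by Jullien's theorem the set $Q=\{C_i: i\in D\}$ is well quasi ordered. Minimality of the initial segment $I(J)$ over intervals $J$ of $D$, together with countability of $D$, then yields a non-identity order preserving $h:J\to J$ with $C_x\leq C_{h(x)}$, hence a self-embedding of $C$ moving a point across a non-scattered interval, and Lemma~\ref{lem:interval} concludes. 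Your proposal contains neither the reduction nor the wqo/minimality argument that locates a usable interval, and it additionally leaves the non-isomorphism of the $C(f)$'s as an admitted obstacle (an isomorphism need not respect your insertion points, and $C_{d_n}+\omega^{f(n)}$ is only an invariant after the positions $d_n$ are made order-theoretically identifiable, which you do not do). The one correct reduction you make --- the all-singletons case via Corollary~\ref{cor:smallsib} --- is the easy part. As it stands the argument has a genuine gap at its central claim.
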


The proofs will be completed  in the next section, but we stress that there are
many immediate unanswered questions.

\begin{problem}\label{prob:embeddingrigid}
Suppose that $C = \sum_{i \in D} C_i$, where:
	\begin{itemize}
	\item $D$ is embedding rigid, 
	\item each $C_i$ is scattered,
	\item $\sib(C_i)=1$ for all but finitely many $i \in D$, and 
	\item $max\{\sib(C_i): i \in D\} = \kappa$.
	\end{itemize}
Does it follow that $\sib(C)=\kappa$?
\end{problem}

Another intriguing question is the following.

\begin{problem}\label{prob:embeddingrigiddecomp}
Suppose that a chain $C$ satisfies $\sib(C)=\kappa < \CC$, can $C$ be
in fact be written as in Problem \ref{prob:embeddingrigid}?
\end{problem}

And further regarding embedding rigidity, we cannot answer the
following question in full generality.

\begin{problem}\label{prob::embeddingrigiddouble}
Suppose that $C = \sum_{i \in D} C_i$, where $D$ and every $C_i$ are
embedding rigid, is $C$ necessarily embedding rigid?
\end{problem}

\noindent The answer here is clearly yes if $C$ is countable as this
immediately implies, since $D$ is embedding rigid, that $D$ is in fact
finite, and thus each $C_i$ must be finite as well. Similarly if each
$C_i$ is countable, then they must be finite. And again the answer is
positive if all the $C_i$ are isomorphic.

\section{Proofs} \label{sec:proof}

In this section we will prove Propositions \ref{prop:manysibling}, 
\ref{prop:finitesum}, \ref{prop:siblingsurordinal} and
\ref{prop:allctble}, and Theorems \ref{thm:scattered} and  \ref{thm:fullchar}.

Thus let $C$ be a chain. By Hausdorff's condensation arguments (see
Rosenstein \cite{key-R}) we can immediately write $C=\sum_{i \in D}
C_i$ where $D$ is dense (singleton or infinite) and each $C_i$ is
scattered. To see this, define, for $x,y \in C$, the equivalence
relation $x \equiv_0 y$ if the interval $[x,y]$ is finite. Now for
successor ordinals, define $x \equiv_{\alpha+1} y$ if the interval
$[x/{\equiv_\alpha},y/{\equiv_\alpha}]$ is finite in
$C/\equiv_{\alpha}$. For a limit ordinal $\beta$, simply let
$\equiv_\beta := \bigcup_{\alpha<\beta} \equiv_\alpha$. Then the
Hausdorff rank of $C$, written $h(C)$, is the least ordinal $\alpha$
such that $\equiv_\alpha \; = \; \equiv_{\alpha +1}$. Then $D$ above is $C /
\equiv_{h(C)}$ and the $C_i$ above are simply the $\equiv_{h(C)}$
equivalence classes.

\begin{proof} (of Propositions \ref{prop:finitesum})

Suppose that neither $(\omega^*+\omega) \cdot \omega$ nor
$(\omega^*+\omega) \cdot \omega^*$ are embeddable into $C$ and that every
chain $C'$ with this property and smaller Hausdorff rank is a finite
sum of surordinals and reverse surordinals.  Let $\alpha= h(C)$. If
$\alpha=0$ then $C$ is either an integer, $\omega$, $\omega^*$ or
$\omega^*+\omega$, hence an ordinal, the reverse of an ordinal or a
surordinal. Suppose $\alpha \geq 1$ and we proceed in two cases. 

\noindent  Case 1. $\alpha$ is a successor ordinal, $\alpha=\alpha'+1$.  
Then $D_{\alpha'}=C/\equiv_{\alpha'}$ is either an integer ($\not
=1$), $\omega$, $\omega^*$ or $\omega^*+\omega$. By the induction
hypothesis, each equivalence class $C_{n, \alpha'}$ of
$\equiv_{\alpha'}$ is a finite sum of surordinals and reverse
surordinals. Since $C= \sum_{n\in D_{\alpha'}} C_{n,
\alpha'}$, if $D_{\alpha'}$ is finite then $C$ is a finite sum of
surordinals and reverse surordinals too. If $D_{\alpha'}=\omega$ then
for $n$ large enough, either each $C_{n, \alpha'}$ is well ordered, or
reversely well ordered, otherwise $(\omega^*+\omega) \cdot \omega$ will be
embeddable into $C$, hence $C$ is a finite sum of surordinals and
reverse surordinals. The same argument leads to the same conclusion if
$D_{\alpha'}$ is equal to $\omega^*$ or to $\omega^*+\omega$. 

\noindent Case 2. $\alpha$ is a limit ordinal. 
In this case, every pair of elements $x$, $y$ of $C$ belongs to some
$\equiv_{\alpha'}$-equivalence class for some $\alpha'<\alpha$.  We
may write $C=A+B$ with $A= \sum^*_{\alpha<\kappa}A_{\alpha}$, $B=
\sum_{\beta<\lambda}B_{\beta}$, where $\kappa$ and $\lambda$ are
cardinals equal respectively to the coinitiality and the cofinality of
$C$ and the $A_{\alpha}$'s and $B_\beta$'s included in some
$\equiv_{\alpha'}$-equivalence classes. If $\lambda\geq\omega$ then
for $\beta$ large enough, say $\beta\geq \beta_0$, $B_{\beta}$ is well
ordered, or reversely well ordered (otherwise again 
$(\omega^*+\omega) \cdot \omega$ would be embeddable into $C$). Since by
induction each $B_{\beta}$ is a finite sum of surordinals and reverse
surordinals, $\sum _{\beta\geq \beta_0}B_{\beta}$ is such a sum as well.
Since via the induction hypothesis $\sum_{\beta'<\beta_0}B_{\beta'}$
is a finite sum of surordinals and reverse surordinals, $B$ is thus such a
sum.  The same argument applied to $A$ ensures that $C$ is a sum
of surordinals and reverse of surordinals.  

Conversely, suppose that $C$ is a finite sum of surordinals and
reverse of surordinals.  If $(\omega^*+\omega) \cdot \omega$ was embeddable
into $C$, then since it is indecomposable, it would be embeddable into a
member of this sum, which is clearly impossible.  The same argument applies to
$(\omega^*+\omega) \cdot \omega^*$. With that the proof is complete. \qed
\end{proof}

But the goal here is to have a more specific structure
decomposition. To do so we shall make use of labellings of a chain $C$
by a well quasi-ordered set $Q$, that is a reflexive and transitive
binary relation such that every infinite sequence contains an infinite
increasing subsequence. A labelling of $C$ is a pair of the form
$(C,\ell)$ (or simply $\ell$ when $C$ is clear) where $\ell:C \rightarrow Q$ is an order preserving map. A
$Q$ embedding of a labelling $(C,\ell)$ into another labelling
$(C',\ell')$ is an embedding $f:C \rightarrow C'$ such that $\ell(x)
\leq \ell'(f(x))$ for all $x \in C$.
Two labellings $\ell$ and $\ell'$ (or $(C,\ell)$ and $(C,\ell')$) are
said to be isomorphic if there is an automorphism $\phi$ of $C$ such
that $\ell' \circ \phi = \ell$.

We begin with a counting argument for labellings. 

\begin{lemma}\label{lem:labelling}
Let $C$ be an infinite chain and $|Q|>1$. Then there are at least $\CC$ 
pairwise non-isomorphic labellings.
\end{lemma}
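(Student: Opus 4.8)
The plan is to fix two distinct labels $a < b$ in $Q$ (or merely $a \ne b$; since $Q$ is only quasi-ordered we can certainly pick two elements that are not equivalent, or even just two with $a \not\geq b$) and to encode a subset of an infinite chain of ``independent slots'' in $C$ by a labelling. First I would isolate inside $C$ a suitable countably infinite sub-configuration on which to hang $\CC$ many labellings; the natural candidate is an infinite family of pairwise disjoint intervals (or even just points) of $C$ that are moved off themselves by every self-embedding of $C$, but since we cannot assume $C$ is embedding-rigid, the cleaner route is to exploit Hausdorff condensation: write $C = \sum_{i \in D} C_i$ with $D$ dense or a singleton and each $C_i$ scattered, and argue that within some fixed $C_i$ (if $D$ is a singleton) or within the pattern of the $C_i$'s (if $D$ is infinite) there is an $\omega$-indexed or $\omega^*$-indexed sequence of convex pieces that any self-embedding must respect in a way that makes the labelling count survive.

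The key steps, in order: (1) Reduce to producing $\CC$ labellings that are pairwise non-isomorphic rather than merely non-isomorphic under $Q$-embedding — an isomorphism of labellings requires an \emph{automorphism} of $C$, which is far more restrictive than an embedding, so this is the easy direction and is where the bound $\CC$ should come from cheaply. (2) Locate in $C$ an infinite antichain-like family: a sequence $(x_n)_{n<\omega}$ of points (or convex blocks) such that the structure ``remembers'' each $x_n$ up to automorphism — for instance, points with pairwise distinct order-theoretic invariants (distinct ranks of the convex components they generate, or distinct finite intervals around them), which exist because $C$ is infinite. (3) For each $S \subseteq \omega$ define $\ell_S : C \to Q$ by $\ell_S(x) = b$ if $x = x_n$ for some $n \in S$ and $\ell_S(x) = a$ otherwise; this is order preserving since every map into $Q$ from a chain is order preserving once we do not require strictness — wait, we do need $\ell$ order preserving, so instead let $\ell_S$ be constantly $a$ except on the blocks indexed by $S$ where it is $b$ on the whole block; monotonicity along the chain is then automatic because the blocks are convex and we only use two values with $a\le b$. (4) Show $\ell_S \cong \ell_{S'}$ implies $S = S'$: an automorphism $\phi$ of $C$ with $\ell_{S'} \circ \phi = \ell_S$ must send the $b$-region of $\ell_S$ onto the $b$-region of $\ell_{S'}$; since $\phi$ is an automorphism it permutes the distinguished blocks preserving their invariants, hence fixes each of them setwise, forcing $S = S'$. (5) Count: $|{\powerset \omega}| = \CC$, done.

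The main obstacle I expect is step (2)/(4): guaranteeing that the chosen blocks $(x_n)$ are genuinely ``rigid up to automorphism'' inside $C$, i.e. that no automorphism of $C$ can shuffle them. If $C$ is highly homogeneous (say $C = \BQ$, where the automorphism group is enormous), then no sequence of points is automorphism-fixed, and the naive labelling collapses. The fix is that even for $\BQ$ one can still get $\CC$ labellings: pick an $\omega$-sequence of points and, for a subset $S$, instead of labelling single points, attach to the $n$-th point a label pattern of ``length coding $n$'' — but $Q$ has only $|Q|>1$, not necessarily infinitely many values, so one codes $n$ by using a growing number of consecutive $b$'s (a block of $a^n b$ or similar) in disjoint intervals, exploiting that $\BQ$ contains order-isomorphic copies of any finite pattern but an automorphism preserving the global labelling must match $b$-runs of equal finite length to each other, and there is at most one $b$-run of each length, pinning the coding. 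So the real content is: in \emph{any} infinite chain one can embed, as a labelling, an $\omega$-indexed sequence of pairwise non-isomorphic finite labelled blocks separated by long constant stretches, and then a labelling-automorphism must fix the multiset of blocks, hence fix which blocks appear, giving $\CC$. I would carry this out by a short case split on whether $C$ has an infinite well-ordered or reverse-well-ordered subchain (use that directly, as in Example \ref{exa:ccsiblings}) or, failing that, contains $\BQ$ or at least an infinite dense-in-itself piece, and handle each case by the block-coding described above.
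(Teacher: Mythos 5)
There is a genuine gap, and it sits exactly where you predicted: in step (4), i.e.\ in guaranteeing that the marked blocks are recoverable from the labelling up to automorphism. Your proposed repair --- coding $n$ by a run of $n$ consecutive $b$'s --- only makes sense where ``consecutive'' makes sense, namely inside an infinite $\equiv_0$-class of $C$ (a maximal convex piece of type $\omega$, $\omega^*$ or $\omega^*+\omega$). In that situation your argument does go through, and is close in spirit to the paper's first case, which restricts attention to one infinite $\equiv_0$-class $F_i$ and counts labellings of $F_i$ modulo its (small) automorphism group. But when every $\equiv_0$-class is finite the construction collapses: take $C=\BQ$. A convex subset of $\BQ$ with more than one and finitely many elements does not exist, so there are no ``$b$-runs of length $n>1$''; a block of $n$ points labelled $b$ is just an $n$-element subset with $a$-points interleaved densely, and an automorphism of $\BQ$ carries any finite set of $b$-points to any other finite set of the same cardinality. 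Concretely, your labellings for $S=\{1,2\}$ and $S'=\{3\}$ each have a three-element $b$-set with densely ordered complement, hence are isomorphic labellings although $S\neq S'$. Your fallback case split does not rescue this: every infinite chain contains a subchain of type $\omega$ or $\omega^*$ (Ramsey), but such a subchain is not automorphism-invariant inside a dense chain, so ``using it directly'' again only records the cardinality or order type of the marked set, giving countably many invariants, not $\CC$.

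The correct dichotomy, and the one the paper uses, is on the first Hausdorff condensation $C=\sum_{i\in C/\equiv_0}F_i$: either some $F_i$ is infinite (then reduce to labellings of that $F_i$, whose automorphism group is at most countable, so a crude counting argument gives $|Q|^{\aleph_0}\geq\CC$ classes), or all $F_i$ are finite, in which case $C/\equiv_0$ is infinite and dense, so $C$ embeds a copy of \emph{every} countable chain. In the dense case one takes as the $\CC$ labellings the characteristic functions of such copies: the order type of $\ell^{-1}(b)$ is an isomorphism invariant of the labelling, and there are $\CC$ pairwise non-embeddable... rather, pairwise non-isomorphic countable order types. This is the idea missing from your proposal: in the dense case the invariant must be the \emph{order type} of the marked set, not any finite local pattern, because no finite local pattern survives the automorphism group. (A shared cosmetic issue, present in the paper as well: a two-valued labelling alternating between $a$ and $b$ is literally ``order preserving'' into $Q$ only if $a$ and $b$ are mutually comparable in $Q$; the lemma should be read with labellings as arbitrary maps into $Q$.)
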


\begin{proof}
Observe that if $|C|=\mu\geq \aleph_0$, $|Q|=\kappa>1$, and $|Aut(C)|<
\kappa^\mu$, then there are $\kappa^\mu \geq \CC$ non-isomorphic
labellings.

In general write $C= \sum_{i \in C/ \equiv_0} F_i$, and thus each $F_i$ is either
finite, or type $\omega$, $\omega^*$, or $\omega^* + \omega$. We
proceed in cases.

We first consider the case where some $F_i$ is infinite. Then
selecting an arbitrary $q \in Q$, we extend each labelling $\ell:F_i
\rightarrow Q$ to the labelling $\overline{\ell}:C \rightarrow Q$ by
setting $\overline{\ell}(x)=\ell(x)$ for $x \in F_i$, and
$\overline{\ell}(x)=q$ otherwise. If now two such labellings
$\overline{\ell}$ and $\overline{\ell}'$ are isomorphic via some
automorphism $\phi$ of $C$, for example $\overline{\ell}' \circ \phi =
\overline{\ell}$, then $\phi(F_i)=F_j$ for some $j$. If $j \neq i$,
then $\ell'$ and $\ell$ are equal to $q$ on $F_i$; if $j=i$, then
$\phi$ induces an automorphism of $F_i$ and $\ell'$ and $\ell$ are
isomorphic. Hence there are as many isomorphic labellings of $C$ as of
$F_i$, and thus at least $\CC$.

In the case all the $F_i$ are finite, then $C/ \equiv_0$ is infinite
and dense, and thus $C$ contains a copy of every countable chain. But
each such copy yields a labelling of $C$ into two colours, and since
isomorphic labellings yield isomorphic copies and there are $\CC$
non-isomorphic countable chains, then there are $\CC$ non-isomorphic
labellings of $C$.\qed
\end{proof}

In particular, if $C$ is equal to $\omega$, $\omega ^*$ or to
$\omega^*+\omega$, then  there are $|Q|^{\aleph_0}$ non-isomorphic
labellings. 

\begin{problem}\label{prob:labellings}
Are there generally in fact $2^{|C|}$ non-isomorphic labellings? 
\end{problem}

\medskip

We will also need the following.

\begin{lemma} \label{lem:sib}
Let $C$ be a chain, $\alpha$ an ordinal and $\kappa$ a cardinal. \\
\noindent Then if $\CM=\{E \in C / \equiv_\alpha : \sib(E)\geq \kappa\}$ and
$\mu=|\CM|$, then $\sib(C) \geq min\{ \CC, \kappa^\mu\}$.
\end{lemma}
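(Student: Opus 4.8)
The plan is to produce siblings of $C$ by substituting, block by block, into the Hausdorff decomposition. Write $C=\sum_{i\in D}C_i$ with $D=C/\equiv_\alpha$ and the $C_i$ the $\equiv_\alpha$-classes; each $C_i$ is scattered of Hausdorff rank $\le\alpha$. We may assume $\kappa\ge 2$ and $\mu\ge 1$, since otherwise $\min\{\CC,\kappa^\mu\}\le 1\le\sib(C)$; and if $D$ is a singleton then $C=C_0$ is scattered, $\CM=\{C\}$ forces $\mu=1$, and $\sib(C)=\sib(C_0)\ge\kappa$, so we may assume $D$ is dense. For each $E\in\CM$ fix a family of $\ge\kappa$ pairwise non-isomorphic siblings of $E$. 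A \emph{substitution} is a choice, for each $i$ with $C_i\in\CM$, of one of the fixed siblings $C_i^\sigma$ of $C_i$ (and $C_i^\sigma=C_i$ otherwise); it yields $C_\sigma=\sum_{i\in D}C_i^\sigma$. Since each $C_i^\sigma$ embeds into $C_i$ and conversely, the union of these block-wise embeddings gives $C_\sigma\le C$ and $C\le C_\sigma$, so every $C_\sigma$ is a sibling of $C$.

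The heart of the proof is counting isomorphism types among the $C_\sigma$. The key point is that Hausdorff condensation is an isomorphism invariant, and that substituting an equimorphic chain for a block does not disturb the level-$\alpha$ condensation: within a block the relations $\equiv_\beta$ are intrinsic, each $C_i^\sigma\equiv C_i$ is scattered of rank $\le\alpha$ (a subchain of $C_i$), and since $D$ is dense no two distinct blocks merge; hence $C_\sigma/\equiv_\alpha$ is again $D$, with blocks $C_i^\sigma$. Consequently any isomorphism $h\colon C_\sigma\to C_\tau$ induces an automorphism $\phi$ of $D$ with $h$ carrying $C_i^\sigma$ isomorphically onto $C_{\phi(i)}^\tau$. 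Because equimorphy is transitive, equimorphic chains have the same siblings, so $\sib$ is an equimorphy invariant; thus $\phi(\CM)=\CM$ and $\phi$ restricts to an automorphism of the chain $\CM$ that matches up, position by position, the chosen siblings of $\sigma$ and of $\tau$.

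From here the two halves of the bound separate. If $\mu$ is finite then $\CM$ is a finite chain, whose only automorphism is the identity, so $C_\sigma\cong C_\tau$ forces $C_i^\sigma\cong C_i^\tau$ for every $i$, hence $\sigma=\tau$ since the fixed siblings of each block are pairwise non-isomorphic. Therefore the $\ge\kappa^\mu$ substitutions are pairwise non-isomorphic and $\sib(C)\ge\kappa^\mu\ge\min\{\CC,\kappa^\mu\}$. If $\mu$ is infinite then $\kappa^\mu\ge 2^{\aleph_0}=\CC$, so it suffices to produce $\CC$ non-isomorphic $C_\sigma$; here I would use only two fixed non-isomorphic siblings per block and invoke Lemma~\ref{lem:labelling} for the infinite chain $\CM$: reading a two-valued labelling of $\CM$ as such a substitution, non-isomorphic labellings of $\CM$ should give non-isomorphic chains $C_\sigma$, and there are at least $\CC$ of them.

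I expect the last step to be the main obstacle. The difficulty is that siblings of a block $E$ are by definition equimorphic to $E$, so distinct positions of $\CM$ carrying equimorphic blocks have overlapping sibling-sets, and a priori an automorphism of $\CM$ could carry a substituted block onto a position carrying an isomorphic but unsubstituted block; one therefore cannot immediately convert "$\phi|_\CM$ matches up the chosen siblings" into "the two labellings are isomorphic". This is exactly the bookkeeping that must be done carefully — for instance by first passing to a well-chosen infinite subchain of $\CM$ of order type $\omega$ or $\omega^*$, which every infinite chain contains and which is rigid, and arranging the substituted blocks there so that the induced automorphism is forced to be the identity; alternatively by re-running the count inside a single infinite $\equiv_0$-class of $\CM$, mirroring the "infinite $F_i$" case in the proof of Lemma~\ref{lem:labelling}. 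I expect this isomorphism-counting, rather than the substitution or the equimorphy verification, to be where the real work lies.
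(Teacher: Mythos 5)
Your proposal follows essentially the same route as the paper's proof: substitute, for each class in $\CM$, one of $\kappa$ fixed pairwise non-isomorphic siblings; observe that an isomorphism between two substituted chains induces an automorphism of $C/\equiv_\alpha$ carrying $\CM$ onto $\CM$ and matching the chosen siblings, i.e.\ an isomorphism of labellings of $\CM$; then count via rigidity of finite chains when $\mu$ is finite and via Lemma~\ref{lem:labelling} when $\CM$ is infinite. The obstacle you flag at the end is dispatched in the paper simply by choosing the families $S(E)$ to depend only on the equimorphy class of $E$: since the induced automorphism $g$ satisfies $E\equiv g(E)$, the chains $\zeta(E)$ and $\xi(g(E))$ then lie in one and the same family of pairwise non-isomorphic chains, so their isomorphism forces equality of labels and no passage to an $\omega$-subchain is needed; note also that your reduction to ``$D$ dense'' is neither justified (for a general ordinal $\alpha$ the quotient $C/\equiv_\alpha$ need not be dense) nor actually used by the counting argument.
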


\begin{proof}
For $E \in \CM$, let $S(E)$ be a collection of $\kappa$ pairwise
non-isomorphic chains equimorphic to $E$. Then for $\zeta \in \prod_{E
\in
\CM} S(E)$, define 
\[ C(\zeta) = \sum_{E\in C/ \equiv_\alpha} C_{E, \zeta} \]
where 
\[ C_{E,\zeta}= \left\{ 
	\begin{array}{l} 
	 \zeta(E) \mbox{ if } E \in \CM, \\
	  E \mbox{ otherwise.} \\
	\end{array}
	\right.
\]
Clearly $C(\zeta) \equiv C$ for each $\zeta$. Now for $\zeta, \xi \in
\prod_{E \in \CM} S(E)$, an isomorphism between $C(\zeta)$ and $C(\xi)$ 
would  preserve $\equiv_\alpha$ classes, and thus  would induce an isomorphism
$g$ of $C / \equiv_\alpha$ onto $ C / \equiv_\alpha$ such that $E
\equiv g(E)$ for each $E \in C / \equiv_\alpha$. Clearly $E \in \CM$
if and only if $g(E) \in \CM$, and thus $\zeta(E)=\xi(g(E))$. This
means that the labelled chains $(\CM,\zeta)$ and $(\CM, \xi)$ are
isomorphic.

\noindent In the case that $\CM$ is finite then we must have 
$\zeta=\xi$, and the number of non-isomorphic labellings is at least
\[  \prod_{E \in \CM} |S(E)| = max \{ |S(E)|: E \in \CM \} \geq \kappa. \]
If on the other hand $\CM$ is infinite, then according to Lemma
\ref{lem:labelling} there are at least $\CC$ non-isomorphic labellings
into a set of size at least 2. The conclusion follows.  \qed
\end{proof}

\begin{lemma}\label{lem:interval}
If $f:C \rightarrow C$ is an order preserving map, and for some $x \in
C$ the interval determined by $x$ and $f(x)$ is non-scattered, then $\sib(C)\geq \CC$.
\end{lemma}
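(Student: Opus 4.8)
The plan is to use $f$ to replace $C$ by an equimorphic chain $C^{\sharp}$ which visibly contains a copy of $(\omega^{*}+\omega)\cdot\omega=\BZ\cdot\omega$ lying inside a single $\equiv_{1}$-condensation class, and then to invoke Proposition \ref{prop:manysibling} together with Lemma \ref{lem:sib}.

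First I would reduce to the case $x<f(x)$: if $f(x)=x$ the interval is a singleton, hence scattered, and there is nothing to prove; if $f(x)<x$ one applies the argument to the reverse chain $C^{*}$, on which $f$ is still order preserving and now satisfies $x<f(x)$, using $\sib(C)=\sib(C^{*})$. So assume $x<f(x)$ and write $C=C_{0}+J+V$ where $C_{0}=\{y\in C:y<x\}$, $J=[x,f(x))$ and $V=\{y\in C:y\geq f(x)\}$; by hypothesis $J$ is non-scattered, so $\BQ\leq J$. Since $f$ is order preserving it carries the final segment $U:=J+V=\{y\in C:y\geq x\}$ into $V$, so $U\leq V$; together with $V\subseteq U$ this gives $U\equiv V$, hence $C\equiv C_{0}+V$. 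As $\BZ\cdot\omega$ is a countable scattered chain, $\BZ\cdot\omega+V\leq\BQ+V\leq J+V=U\leq V$, so the chain $C^{\sharp}:=C_{0}+\BZ\cdot\omega+V$ satisfies $C^{\sharp}\leq C_{0}+V\equiv C$ and trivially $C\equiv C_{0}+V\leq C_{0}+\BZ\cdot\omega+V=C^{\sharp}$, i.e. $C^{\sharp}\equiv C$. Since $\sib$ depends only on the equimorphy class, it suffices to prove $\sib(C^{\sharp})\geq\CC$.

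Next I would follow the Hausdorff condensation of $C^{\sharp}$ along the inserted block $\BZ\cdot\omega$. Each of its $\omega$ copies of $\BZ$ is a single $\equiv_{0}$-class of $C^{\sharp}$, and none of them merges with $C_{0}$ or with $V$, because below (resp. above) any element of the block there are already infinitely many further elements of the block; hence the interval $\BZ\cdot\omega$ becomes a copy of $\omega$ in $C^{\sharp}/\equiv_{0}$. These $\omega$ points are pairwise at finite distance, so they lie in one $\equiv_{1}$-class, while no element of $V$ is $\equiv_{1}$-equivalent to them; thus the $\equiv_{1}$-class $E$ of $C^{\sharp}$ meeting $\BZ\cdot\omega$ satisfies $\BZ\cdot\omega\leq E$ and $E\subseteq C_{0}\cup(\BZ\cdot\omega)$. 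Being an $\equiv_{1}$-class, $E$ is scattered, and $(\omega^{*}+\omega)\cdot\omega=\BZ\cdot\omega$ embeds into it, so Proposition \ref{prop:manysibling} gives $\sib(E)\geq\CC$. Finally, applying Lemma \ref{lem:sib} with $\alpha=1$ and $\kappa=\CC$, and noting $\mu=|\{E'\in C^{\sharp}/\equiv_{1}:\sib(E')\geq\CC\}|\geq1$ since $E$ is such a class, we obtain $\sib(C^{\sharp})\geq\min\{\CC,\CC^{\mu}\}=\CC$, and hence $\sib(C)\geq\CC$.

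The two points needing care are: (i) in the reduction, the embedding $\BZ\cdot\omega+V\leq V$, which is exactly where non-scatteredness of $[x,f(x)]$ enters, through $\BQ\leq J$ together with $U\leq V$ supplied by $f$; and (ii) the condensation bookkeeping in the last step, namely that the planted copy of $\BZ\cdot\omega$ is neither split across several $\equiv_{1}$-classes nor absorbed, with loss of its order type, into an $\equiv_{1}$-class originating in $C_{0}$. Everything else is a direct appeal to Proposition \ref{prop:manysibling}, Lemma \ref{lem:sib}, and the standard fact that every $\equiv_{\alpha}$-class is scattered.
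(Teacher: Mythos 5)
Your proof is correct, but it takes a genuinely different route from the paper's. The paper's argument stays inside the gap: from $A=(-\infty,x]$, $M=(x,f(x))$, $B=[f(x),+\infty)$ it observes that $A+X+B\equiv C$ for every $X\leq M$, then inserts $\CC$ many chains $X_\alpha=\BQ+S_\alpha+\BQ$ built from pairwise non-isomorphic countable scattered $S_\alpha$ with $\sib(S_\alpha)>1$, and shows directly (with a short isomorphism analysis plus Lemma \ref{lem:sib}) that one obtains $\CC$ siblings either way. You instead insert a single copy of $\BZ\cdot\omega$, check by a condensation computation that it sits inside one scattered $\equiv_1$-class of the equimorphic chain $C^{\sharp}$, and then outsource the counting to Proposition \ref{prop:manysibling} via Lemma \ref{lem:sib}. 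All the steps check out: the reduction $C\equiv C_0+V$, the chain of embeddings $\BZ\cdot\omega+V\leq\BQ+V\leq J+V\leq V$, the bookkeeping showing the planted block is a union of $\equiv_0$-classes forming an $\omega$-sequence confined to a single $\equiv_1$-class disjoint from $V$, and the fact that $\equiv_\alpha$-classes are scattered. Two remarks. First, your argument creates a forward dependency: in the paper Lemma \ref{lem:interval} is proved before Proposition \ref{prop:manysibling}; this is not circular, since the paper's proof of that proposition (induction on Hausdorff rank, Lemmas \ref{lem:sib}, \ref{lem:wqo} and \ref{lem-0-equiv finie}) nowhere uses Lemma \ref{lem:interval}, but the lemma would have to be moved after the proposition, and you are leaning on a substantially heavier result (it rests on the well-quasi-ordering machinery) than the paper's self-contained construction needs. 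Second, a cosmetic point: scatteredness of $\BZ\cdot\omega$ is irrelevant to the embedding $\BZ\cdot\omega\leq\BQ$; countability suffices.
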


\begin{proof}
We may assume without loss of generality that $x<f(x)$, and define 
\[ A=(-\infty,x], \;  M=(x,f(x)), \; \mbox{ and } B=[f(x),+\infty) \]
Thus $f$ is  a witness to $C \leq A+B$. But then $A+X+B \equiv C$ whenever $X \leq M$, since for
any such $X$ we have:
\[ A+X+B \leq C \leq A+B \leq A+X+B.\]
But now, if $\{S_\alpha: \alpha < \CC\}$ is a family of pairwise
non-isomorphic countable scattered sets such that $\sib(S_\alpha)>1$,
then let $X_\alpha =
\BQ+S_\alpha+\BQ$ and finally $C_\alpha = A + X_\alpha+B$. From the
above remark we immediately have that $C_\alpha \equiv C$ for each
$\alpha$.

Hence $\sib(C) \geq \CC$ provided  that the $C_\alpha$'s are
pairwise non-isomorphic. Suppose on the contrary that $g:C_\alpha
\rightarrow C_\beta$ is an isomorphism for some $\alpha \neq \beta$. 

\noindent Write $C_\alpha = \sum_{i \in D_\alpha} C_{\alpha, i}$ and $C_\beta =
\sum_{i \in D_\beta} C_{\beta, i}$, where each $C_{\alpha, i}$ and $
C_{\beta, i}$ is scattered and $D_\alpha$ and $D_\beta$ are dense.

We claim that $\{i\in D_{\alpha}: \sib(C_{\alpha,i})>1\}$ is
infinite. Indeed, $g$ carries each $C_{\alpha, i}$ to some $C_{\beta,
j}$. 

Since $S_{\alpha}$ is some $C_{\alpha,i}$, $S_{\beta}$ some
$C_{\beta,j}$ and $S_{\alpha}\not \simeq S_{\beta}$, the image of
$S_{\alpha}$ is either included into $A$ or into $B$. Without loss of
generality, we can assume that this image is included into $B$; from
which it follows that $g$ is an embedding of $B$ into itself. Consider
an arbitrary element $x_0 \in S_{\alpha}$, recursively define
$x_{n+1}=g(x_{n})$ for $n<\omega$.  Without loss of generality we may
assume that $x_0<x_1=g(x_0)$, and thus observe that $g$ is an
embedding of $B$ into itself.  Now for each $n$ choose $i_n$ such that
$x_n \in C_{\alpha, i_n}$, but then $C_{\alpha, i_n} \cong S_\alpha$,
and thus $\sib(C_{\alpha, i_n})\geq 2$.  Applying Lemma \ref{lem:sib}
with $\alpha=h(C)$, we get $\sib(C_\alpha) \geq \CC$, and hence $\sib(C)
\geq \CC$. \qed
\end{proof}

We will need two other notions of condensation. Let $C$ be a chain,
define for $x,y\in C$ the equivalence relation $x\equiv_{well} y$
(resp. $x\equiv_{well^*} y$) if the interval $[x,y]$ is well ordered
(resp. reversely well ordered), see Rosenstein \cite {key-R} pages 72
and 73 for an illustration of these notions.

We need an easy observation whose verification is left to the reader.

\begin{lemma}\label{lem:equivclasses}
Equivalence classes of $\equiv_{well}$ (resp. $\equiv_{well^*}$) are
the maximal intervals of $C$ which are surordinals (resp. reverse
surordinals). \\
\noindent Furthermore, the intersection of $\equiv_{well}$ and
$\equiv_{well^*}$ is equal to $\equiv_0$.
\end{lemma}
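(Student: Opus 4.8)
The plan is to treat the two assertions in turn, and for the first to prove both inclusions between $\equiv_{well}$-classes and maximal surordinal intervals; the statement for $\equiv_{well^*}$ will then follow by passing to the reverse chain $C^*$. Throughout I use the characterization of a surordinal recalled in Section~\ref{sec:struc}: a chain is a surordinal iff $1+\omega^*$ does not embed into it, equivalently iff every final segment $\{y : y\ge a\}$ is well ordered. First I would check that $\equiv_{well}$ is an equivalence relation. Reflexivity and symmetry are immediate. For transitivity, given that the intervals determined by $x,y$ and by $y,z$ are well ordered, a short case analysis on the relative order of $x,y,z$ suffices: in the sorted case $x\le y\le z$ one writes $[x,z]=[x,y]\cup[y,z]$ and notes that a nonempty $S\subseteq[x,z]$ meeting $[x,y]$ attains its minimum there while $S\subseteq[y,z]$ inherits well-orderedness; in every other ordering $[x,z]$ is a sub-interval of $[x,y]$ or of $[y,z]$, hence well ordered. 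The same reasoning shows each $\equiv_{well}$-class is convex: if $x,z$ lie in a class $E$ and $x\le w\le z$ then $[x,w]\subseteq[x,z]$ is well ordered, so $w\equiv_{well}x$.

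Next I would show that a $\equiv_{well}$-class $E$ is a \emph{maximal} surordinal interval. It is a surordinal: an embedded copy of $1+\omega^*$ would give $a<b_0>b_1>\cdots$ all inside $E$, whence $[a,b_0]$ would be a well ordered interval containing an infinite descending sequence, absurd. For maximality, let $I\supseteq E$ be a surordinal interval and $z\in I$; fix $x\in E$. If $z\ge x$ then $[x,z]\subseteq\{y\in I : y\ge x\}$, which is well ordered since $I$ is a surordinal, so $[x,z]$ is well ordered and $z\in E$; if $z\le x$ the same argument with the final segment $\{y\in I : y\ge z\}$ gives that $[z,x]$ is well ordered, so again $z\in E$. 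Thus $I=E$. Conversely, if $I$ is a maximal surordinal interval and $x\in I$ has $\equiv_{well}$-class $E$, the identical computation shows $I\subseteq E$, and since $E$ is a surordinal interval, maximality of $I$ forces $I=E$. Applying all of this to $C^*$ yields the corresponding statement for $\equiv_{well^*}$ and reverse surordinals.

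For the equality $\equiv_{well}\cap\equiv_{well^*}=\equiv_0$: if $[x,y]$ is finite it is trivially both well ordered and reversely well ordered, so $\equiv_0\subseteq\equiv_{well}\cap\equiv_{well^*}$. For the reverse inclusion the only point to record is the elementary fact that a chain which is simultaneously well ordered and conversely well ordered is finite --- an infinite well order contains an increasing $\omega$-chain, contradicting the absence of infinite ascending sequences. Hence if $[x,y]$ is both well ordered and reversely well ordered it is finite, i.e. $x\equiv_0 y$.

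I do not expect a genuine obstacle here --- this is precisely why the lemma is left to the reader. The only things needing care are the case analysis in the transitivity/convexity step and the observation that ``surordinal'' constrains only the final segments, so that in the maximality argument one must always realize the relevant interval inside a set of the form $\{y\ge a\}$ rather than an initial segment.
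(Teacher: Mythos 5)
Your proof is correct; the paper offers no proof of this lemma (it is explicitly ``left to the reader''), and your argument is exactly the routine verification that is intended, including the two points that genuinely need care (convexity of the classes and the fact that the surordinal condition constrains final segments, so the relevant interval must be placed inside a set of the form $\{y\ge a\}$). The only cosmetic slip is in the transitivity case analysis: when $z\le y\le x$ the interval $[x,z]$ is again the union $[z,y]\cup[y,x]$ rather than a sub-interval of $[x,y]$ or $[y,z]$, but this case is disposed of by the same union argument as the sorted case, or avoided entirely by assuming $x\le z$ without loss of generality.
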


\begin{lemma}\label{lem:meet}
Let $A$, $B$ two intervals of a chain $C$, one being a
$\equiv_{well}$-equivalence class and the other an
$\equiv_{well^*}$-class. If  $A\cap B$, $A\setminus B$ and
$B\setminus A$ are each non-empty, then $A\cap B$ is either finite or has
order type $\omega^*+\omega$.
\end{lemma}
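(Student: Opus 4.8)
The plan is to analyze the geometry of the intersection $A\cap B$ directly, using Lemma~\ref{lem:equivclasses} to control what $A$ and $B$ look like. Recall that $A$ is a maximal interval which is a surordinal (so $1+\omega^*$ does not embed into $A$, i.e. every cofinal segment generated by a point of $A$ is well ordered), and $B$ is a maximal interval which is a reverse surordinal (so $\omega+1$, read downward, behaves well — equivalently $\omega^*+1$ does not embed as a coinitial segment, so every coinitial segment generated by a point of $B$ is reversely well ordered). Since $A$ and $B$ are intervals of the same chain $C$, so is $A\cap B$, and since both $A\setminus B$ and $B\setminus A$ are non-empty while $A\cap B$ is non-empty, the interval $A\cap B$ sits strictly between $A$ and $B$ in the sense that $A$ properly extends it on one side and $B$ properly extends it on the other. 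The key point I would extract: because $A\cap B\subseteq A$ and $A$ is a surordinal, every cofinal segment of $A\cap B$ is well ordered; because $A\cap B\subseteq B$ and $B$ is a reverse surordinal, every coinitial segment of $A\cap B$ is reversely well ordered.

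First I would pin down which side of $A\cap B$ is ``extended'' by $A$ and which by $B$. Since $A$ is maximal as a surordinal-interval, the points of $A\setminus B$ lie either entirely above or entirely below $A\cap B$ (as $A$ is an interval and $A\cap B\ne\emptyset$); similarly for $B\setminus A$. I claim $A$ must extend $A\cap B$ \emph{downward} and $B$ must extend it \emph{upward} (or the situation is symmetric and trivially stronger). Indeed, if $A$ extended $A\cap B$ upward, then since $B$ is an interval containing $A\cap B$, the sup side of $A\cap B$ would be interior to both $A$ and $B$; but then a point $x\in A\cap B$ together with points of $A$ above it generate a cofinal segment of $A$ that is well ordered, while the same point with points of $B$ below it generates a reversely well ordered coinitial segment of $B$ — combining, $A\cap B$ would have a well-ordered cofinal segment and a reversely-well-ordered coinitial segment, i.e. $A\cap B$ would itself be (up to finite adjustments) of the form $\omega^*+\omega$ or a piece thereof, which is exactly the conclusion. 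So assume $A$ extends $A\cap B$ downward and $B$ extends it upward.

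Now comes the main step. Consider any point $x\in A\cap B$. Looking upward inside $A\cap B$: the segment $\{y\in A\cap B: y\ge x\}$ is a cofinal-type segment of $A\cap B$; since $A\cap B$ is cofinal in nothing larger than itself but $A$ extends it downward, this upper segment is a coinitial-free situation — I use instead that $A\cap B\subseteq A$ and in $A$ the cofinal segment generated by $x$ is well ordered, hence its initial part lying in $A\cap B$ is well ordered. So every point of $A\cap B$ has a well-ordered final segment in $A\cap B$: this forces $A\cap B$ itself to be \emph{reversely well-founded? no} — rather it forces $A\cap B$ to be a surordinal. Dually, $A\cap B\subseteq B$ forces every point of $A\cap B$ to generate a reversely well ordered coinitial segment, so $A\cap B$ is a reverse surordinal. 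A chain that is simultaneously a surordinal and a reverse surordinal has neither $1+\omega^*$ nor $\omega+1$-type bad segments; I would argue such a chain must be finite or of order type $\omega^*+\omega$. Concretely: it has a well-ordered final segment from every point (surordinal) and a reversely well-ordered initial segment from every point (reverse surordinal); if it is infinite, take any point $x$; the final segment from $x$ is a nonzero ordinal, which is either finite or contains a copy of $\omega$, and the initial segment up to $x$ is either finite or contains a copy of $\omega^*$; a short case analysis (using that these segments partition, up to $x$, the whole chain, and that an infinite well order has a first point but an infinite reverse well order has a last point) shows the chain is exactly $\omega^*+\omega$ when both halves are infinite, $\omega$ or $n$ when only the top is infinite — but ``only the top infinite'' would contradict maximality of the surordinal interval $A$ properly containing it on the \emph{bottom}, and dually. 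Hence $A\cap B$ is finite or has type $\omega^*+\omega$.

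\textbf{Main obstacle.} The delicate point is the bookkeeping in the last paragraph: correctly transferring ``cofinal segment generated by $x$ in $A$ is well ordered'' into a statement about $A\cap B$ given that $A$ and $B$ overtake $A\cap B$ on \emph{opposite} sides, and then ruling out the asymmetric types ($\omega$, $\omega^*$, arbitrary ordinals) using maximality of $A$ and $B$ as $\equiv_{well}$- and $\equiv_{well^*}$-classes respectively. I would be careful that maximality of $A$ is used exactly to say that $A\cap B$ cannot be, e.g., a proper initial segment of $A$ that happens to be a larger surordinal than $\omega^*+\omega$: that is fine because $A\cap B$ is allowed to be an infinite surordinal as a \emph{subset} of $A$; what kills the infinite-ordinal possibilities is that $A\cap B$ must \emph{also} be a reverse surordinal (from being inside $B$) and a reversely-well-ordered-from-below infinite chain that is also well-ordered-from-above infinite is forced down to $\omega^*+\omega$. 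So the genuine crux is the elementary but fiddly claim: an infinite chain which is both a surordinal and a reverse surordinal has order type $\omega^*+\omega$, and then reconciling the finite-size boundary cases with the hypothesis that $A\setminus B$ and $B\setminus A$ are non-empty.
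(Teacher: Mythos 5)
Your opening move is sound and is in fact a legitimate variant of the paper's: you observe that $A\cap B$, being a subchain of a surordinal and of a reverse surordinal, is itself both, hence finite or of type $\omega$, $\omega^*$ or $\omega^*+\omega$ (the paper gets the same list more directly from the second half of Lemma~\ref{lem:equivclasses}, namely that $A\cap B$ is a full $\equiv_0$-class). The genuine gap is in the second half: the exclusion of $\omega$ and $\omega^*$, which you yourself flag as the crux, is never carried out, and your case split points the wrong way. Say $A$ is the $\equiv_{well}$-class. In the case you keep ($A$ extends $A\cap B$ downward, $B$ upward), take $a\in A\setminus B$ and $b\in B\setminus A$; since $A\setminus B$ lies entirely below $B$ and $B\setminus A$ entirely above $A$, the set $A\cap B$ is contained both in the well-ordered cofinal segment of $A$ generated by $a$ and in the reversely well-ordered coinitial segment of $B$ generated by $b$, so it is simultaneously well ordered and reversely well ordered, hence outright finite; maximality of $A$ plays no role here and does not by itself rule out $\omega$. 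In the opposite case, which you dismiss with ``$\omega^*+\omega$ or a piece thereof, which is exactly the conclusion,'' the phrase ``a piece thereof'' includes $\omega$ and $\omega^*$ --- precisely what the lemma forbids. That is the only configuration in which an infinite intersection can occur, and it is where all the work lies.

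What is missing is the cover argument. Suppose, in that remaining configuration, $A\cap B$ has type $\omega$ with least element $c$; the class extending below $A\cap B$ is then the $\equiv_{well^*}$-class (the class extending above cannot be, since its coinitial segment generated by a point $b$ above $A\cap B$ contains the whole chain of type $\omega$ and so is not reversely well ordered). Taking $a$ in the lower class with $a<c$, the interval $[a,c]$ is reversely well ordered, so $c$ has a lower cover $c'$ in $C$. Then $[c',c]$ is finite, so $c'\equiv_0 c$, and since $A\cap B$ is a $\equiv_0$-class this forces $c'\in A\cap B$, contradicting the minimality of $c$. (If you prefer to argue via maximality, as your instinct suggests: $c'\equiv_{well}c$ places $c'$ in the $\equiv_{well}$-class of $c$, which in this configuration has minimum $c$ --- same contradiction; but either way the lower cover must first be produced from the reverse well-ordering of $[a,c]$.) The dual argument kills $\omega^*$. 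Without some version of this step your proof only establishes ``finite, $\omega$, $\omega^*$ or $\omega^*+\omega$,'' which is strictly weaker than the statement and not enough for its use in the proof of Theorem~\ref{thm:scattered}.
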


\begin{proof}
From Lemma \ref{lem:equivclasses} above, $A\cap B$ is a
$\equiv_0$-class. Hence it is either finite or has type $\omega^*$,
$\omega$ or $\omega^*+\omega$. We claim that the types $\omega^*$ and
$\omega$ do not occur. Indeed, suppose that $A\cap B$ has type
$\omega$ (the case $\omega^*$ is similar). Let $a\in A\setminus B$ and
$b\in B\setminus A$. Without loss of generality we may suppose $a<b$
(otherwise, exchange the names of $A$ and $B$). Let $c$ be the least
element of $A\cap B$. Since the interval $[a, b]$ contains a chain of
type $\omega$ it is not dually well ordered, hence $B$ is a
$\equiv_{well}$-equivalence class (and since it has a least element it
forms a well ordered chain). Now, $A$ must be a
$\equiv_{well^*}$-equivalence class, but this is impossible. Indeed,
otherwise $[a, c]$ is dually well ordered, hence it contains a lower
cover $c'$ of $c$; but this lower cover satisfies $c'\equiv_0 c$ and
$c'\not\in A\cap B$, contradicting the fact that $A\cap B$ is a
$\equiv_0$-class. \qed 
\end{proof}

\begin{lemma}\label{lem:numbersibling}
For each $\equiv_{well}$-equivalence class or
$\equiv_{well^*}$-equivalence class $E$ of a chain $C$, we have
$\sib(C)\geq \sib(E)$.
\end{lemma}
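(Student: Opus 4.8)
The plan is to manufacture, for each sibling $q$ of $E$, a sibling $C_q$ of $C$ by replacing $E$ — and, simultaneously, \emph{every} $\equiv_{well}$-class of $C$ equimorphic to $E$ — by a copy of $q$. The reason for replacing at all such positions at once is that an isomorphism between two of the $C_q$'s then cannot evade the comparison by permuting those positions. The crux, and the step I expect to be the main obstacle, is to check that after this surgery the inserted copies of $q$ are \emph{exactly} the new $\equiv_{well}$-classes; the touchy point there is the possible presence of an ordinal $\equiv_{well}$-class sitting just above an insertion site.

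I would first reduce. Reversing $C$ (using $\sib(C)=\sib(C^{*})$, $\sib(E)=\sib(E^{*})$, and that $\ast$ interchanges $\equiv_{well}$ and $\equiv_{well^{*}}$) lets me assume $E$ is a $\equiv_{well}$-class; and if $\sib(E)\le 1$ there is nothing to prove, so I assume $\sib(E)\ge 2$, whence $E$ is not an ordinal (ordinals have one sibling, Proposition \ref{prop:finitesumord}). By Lemma \ref{lem:equivclasses} $E$ is a surordinal, and so — being a subchain of $E$, hence still omitting $1+\omega^{*}$, and being equimorphic to the non-ordinal $E$ — is every sibling of $E$. I record the features of a non-ordinal surordinal $E'$ that I need: it has no least element; its final segments generated by a point are well ordered; and every initial segment of $E'$ is \emph{not} well ordered (a proper one because an infinite descending sequence of a surordinal, being unbounded below, must eventually enter it, and $E'$ itself because it is not an ordinal). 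I also use the elementary criterion: for non-empty chains $Y<Z$ with $Y$ a surordinal, $Y+Z$ is a surordinal if and only if $Z$ is well ordered (an infinite descending sequence of $Z$ would be bounded below by a point of $Y$; and if $Z$ is well ordered, an infinite descending sequence of $Y+Z$ is cofinal in $Y$, where the surordinal hypothesis forbids it).

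Now write $C=\sum_{i\in D}E_i$ for the decomposition of $C$ into $\equiv_{well}$-classes ($D=C/\equiv_{well}$), let $J=\{i\in D:E_i\equiv E\}$ (non-empty), and fix a set $S$ of $\sib(E)$ pairwise non-isomorphic siblings of $E$. For $q\in S$ put $C_q=\sum_{i\in D}E_i^{q}$, where $E_i^{q}=q$ for $i\in J$ and $E_i^{q}=E_i$ otherwise; since $E_i^{q}\equiv E_i$ for all $i$, summing embeddings gives $C_q\equiv C$. The key claim is that the $\equiv_{well}$-classes of $C_q$ are precisely the blocks $E_i^{q}$. Granting this, any isomorphism $C_q\cong C_{q'}$ carries $\equiv_{well}$-classes to $\equiv_{well}$-classes, hence induces an automorphism $\bar g$ of $D$ with $E_i^{q}\cong E_{\bar g(i)}^{q'}$ for every $i$; since the blocks equimorphic to $E$ are, in each of $C_q$ and $C_{q'}$, exactly those indexed by $J$, and $\bar g$ sends each block isomorphically onto a block, $\bar g$ maps $J$ onto $J$, and then $q\cong q'$ for any $i\in J$. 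So the $C_q$ $(q\in S)$ are pairwise non-isomorphic siblings of $C$, and $\sib(C)\ge|S|=\sib(E)$.

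It remains to prove the key claim; by Lemma \ref{lem:equivclasses} it suffices to show that no surordinal interval of $C_q$ meets two distinct blocks, i.e. that for $i<j$, $x\in E_i^{q}$, $y\in E_j^{q}$, the interval $[x,y]$ of $C_q$ is not a surordinal. Now $[x,y]=F+G$, where $F$ is the final segment of $E_i^{q}$ from $x$ — well ordered, since $E_i^{q}$ is a surordinal — and $G=(\sum_{i<k<j}E_k^{q})+(\text{the initial segment of }E_j^{q}\text{ up to }y)$; by the criterion it is enough to see that $G$ is not well ordered. If $E_j^{q}$ is not an ordinal this is immediate, since by the standing fact that displayed initial segment of $E_j^{q}$ is already not well ordered. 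If $E_j^{q}$ is an ordinal, then $j\notin J$ (siblings of the non-ordinal $E$ are not ordinals) and $E_j^{q}=E_j$; an ordinal $\equiv_{well}$-class has no immediate predecessor (its least element would be $\equiv_{well}$ to a point of the preceding class, merging the two), so $\{k:i<k<j\}\neq\varnothing$. Then $\sum_{i<k<j}E_k^{q}$ is well ordered if and only if $\sum_{i<k<j}E_k$ is, since $J$ meets $\{k:i<k<j\}$ only in indices of non-ordinal classes $E_k$, which get replaced by the non-ordinal surordinal $q$ — changing neither the well-orderedness of the summand nor the index chain; and $\sum_{i<k<j}E_k$ is not well ordered, for otherwise $[x',\min E_j]$ would be well ordered for $x'\in E_i$, putting $E_i$ and $E_j$ into one $\equiv_{well}$-class of $C$. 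Hence $G$ is not well ordered, as required. The $\equiv_{well^{*}}$ case was reduced to the above at the outset.
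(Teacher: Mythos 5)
Your proof is correct, and it follows the same basic substitution strategy that the paper invokes (the paper gives no details at all here, saying only that the argument follows the lines of Lemma \ref{lem:sib}: replace the class by each of its pairwise non-isomorphic siblings and check that the resulting chains are pairwise non-isomorphic siblings of $C$). You depart from that template in two ways, both of which are improvements for this particular statement. First, by substituting the \emph{same} sibling $q$ simultaneously at every position of $D=C/\equiv_{well}$ whose class is equimorphic to $E$, you can read $q\cong q'$ directly off the induced automorphism of $D$, avoiding the labelling count of Lemma \ref{lem:labelling}; as you note, this uniformity is genuinely needed, since substituting only at the position of $E$ would allow an isomorphism to swap that position with another block equimorphic to $E$. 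Second, and more importantly, you supply the verification that the paper leaves entirely implicit: that after the surgery the inserted copies of $q$ are still exactly the $\equiv_{well}$-classes of the new chain. Your handling of this is the real content of the write-up and it is sound: every sibling of $E$ is again a surordinal (it omits $1+\omega^*$) and is not an ordinal, hence has no least element and no well-ordered non-empty initial segment (your Case 1); and an ordinal $\equiv_{well}$-class can never be immediately preceded by another class, with the intervening sum failing to be well ordered both before and after the substitution (your Case 2). This is precisely the analogue, for $\equiv_{well}$, of the step ``an isomorphism would preserve $\equiv_\alpha$ classes'' in the proof of Lemma \ref{lem:sib}, and it is considerably less obvious here than it is there.
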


The proof follows the same lines of the proof of Lemma \ref{lem:sib}.

\medskip

We recall the notions of indecomposability (from Rosenstein
\cite{key-R} and Fra\"{\i}ss\'e \cite{key-F}). A chain $C$ is
(additively) \emph{indecomposable} if for every decomposition of $C$
into an initial segment $A$ and a final segment $B$, $C$ is embeddable
either into $A$ or into $B$; it is \emph{left indecomposable} if it is
embeddable into every non-empty initial segment and it is
\emph{strictly left indecomposable} if for every decomposition into a
non-empty initial interval $A$ and a final interval $B$, $C$ is
embeddable into $A$ but not into $B$. Right and strictly right
indecomposability are defined in the same way. We recall that if $C$
is indecomposable (resp. strictly right or left indecomposable) and
$C'\equiv C$ then $C'$ is indecomposable (resp. strictly right and left
indecomposable).  We also recall that indecomposable ordinals coincide
with ordinals of the form $\omega^{\alpha}$; that scattered
indecomposable chains are either strictly left indecomposable or
strictly right indecomposable and that every scattered chain is a
finite sum of indecomposable chains, a quite non-trivial result of
Laver \cite{key-L1}.  

We will need the following result of Jullien \cite{key-J3}.

\begin{lemma} \label{finalsegment}
Let $C$ be a chain. Suppose that $C$ is embeddable in each non-empty
final segment of $C$. If $C$ is not an ordinal then $\sib(C)\geq
\aleph_0$.
\end{lemma}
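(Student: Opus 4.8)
The plan is to split into cases according to the cofinality of $C$, and in each case to produce either infinitely many pairwise non-isomorphic siblings directly or to invoke the earlier lemmas (particularly Lemma \ref{lem:numbersibling} and Lemma \ref{finalsegment} itself, applied to a smaller piece). First I would observe that since $C$ embeds into each of its non-empty final segments, $C$ is strictly left indecomposable, or at least left indecomposable; by the recalled facts on indecomposability, any sibling $C'\equiv C$ is again left indecomposable with the same property. Next, if $C$ has a last element, then removing it gives a proper final-segment-stable chain of the same type except for the top point, and one gets two non-isomorphic siblings immediately ($C$ and $C$ minus its top, which are equimorphic by the hypothesis); so I would reduce to the case that $C$ has no last element, i.e.\ $\mathrm{cf}(C)\geq\omega$.

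The main case is $\mathrm{cf}(C)=\omega$. Write $C=\sum_{n<\omega}C_n$ with each $C_n$ a non-empty proper initial interval, chosen so that the tails $\sum_{n\geq m}C_n$ are the standard cofinal final segments; by hypothesis $C$ embeds into each tail. The key construction is to delete or duplicate initial blocks: for an infinite $X\subseteq\omega$ set $C(X)=\sum_{n\in X}C_n$ (with the induced order). Each $C(X)$ is a final-segment-stable chain and, because $C$ embeds into every tail and conversely every $C(X)$ is a sub-sum of $C$, one checks $C(X)\equiv C$. It then remains to extract infinitely many $X$ giving non-isomorphic $C(X)$; this is where the hypothesis ``$C$ is not an ordinal'' is used — if $C$ were an ordinal all the $C(X)$ collapse to the same type, but otherwise some $C_n$ has a non-well-ordered (hence reverse-well-ordered part, or a $1+\omega^*$) or the sequence of order types of the $C_n$ is genuinely non-eventually-constant, and varying which blocks appear yields non-isomorphic results. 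If $C$ is scattered I would use Hausdorff rank and the finite-indecomposable-decomposition (Laver) to pin down an invariant distinguishing the $C(X)$; if $C$ is non-scattered then $\sib(C)=\CC$ already by Corollary \ref{cor:smallsib} and we are done.

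For $\mathrm{cf}(C)=\kappa$ uncountable, I would instead use Lemma \ref{lem:numbersibling}: pick a cofinal $\omega$-sequence of final segments $C=F_0\supseteq F_1\supseteq\cdots$ and consider the $\equiv_{well}$ or $\equiv_0$ classes; if some class $E$ has $\sib(E)\geq\aleph_0$ we conclude, and otherwise $C$ is built from ``rigid-like'' blocks along an uncountable index and one can again duplicate/omit initial chunks of the cofinal tower. I expect the genuine obstacle to be the bookkeeping in the countable-cofinality scattered case: showing that infinitely many of the $C(X)$ are genuinely non-isomorphic requires a clean isomorphism-invariant of sub-sums $\sum_{n\in X}C_n$, and handling the degenerate possibilities (all but finitely many $C_n$ singletons, the $C_n$ eventually of a fixed type $\omega^{\alpha}$, etc.) without accidentally landing back in the ``$C$ is an ordinal'' case that the hypothesis excludes. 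A convenient way around this is to reduce, via the structure of strictly left indecomposable scattered chains, to the surordinal situation and then quote the relevant part of Proposition \ref{prop:siblingsurordinal}, which already gives $\sib\geq\aleph_0$ for the non-ordinal surordinals.
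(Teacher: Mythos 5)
Your plan does not close the central gap; the step that fails is exactly the one you leave as ``one checks'', namely the equimorphy $C(X)\equiv C$ for $C(X)=\sum_{n\in X}C_n$. The hypothesis gives an embedding of $C$ into every non-empty \emph{final segment}, but $C(X)$ is not a final segment, so the direction $C\leq C(X)$ has no justification, and it is false in general: take $C=(\omega^*+1)\cdot\omega$ with consecutive blocks $C_0=\omega^*$, $C_1=1$, $C_2=\omega^*$, $C_3=1,\dots$ (whose tails are indeed cofinal final segments) and let $X$ be the odd indices; then $C(X)=\omega$, which does not embed $C$. If instead you regroup so that equimorphy does hold (say $C_n=\omega^*+1$ for every $n$), then all the $C(X)$ are isomorphic to $C$ and you get no new siblings at all, even though $C$ is not an ordinal and every block contains $1+\omega^*$ --- so the mechanism you propose for extracting infinitely many pairwise non-isomorphic $C(X)$ also fails on this example (note $\sib((\omega^*+1)\cdot\omega)=\aleph_0$ does hold, just not via your construction). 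The uncountable-cofinality case is left entirely to ``duplicate/omit initial chunks'', and the opening observation is reversed: embeddability into every non-empty final segment is the right-indecomposable-type condition, not the left one.

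The paper's proof avoids all case analysis by using a single isomorphism invariant at the bottom of the chain. Let $I_D$ denote the largest well-ordered initial segment of a chain $D$ and set $C'=C\setminus I_C$. Since $C$ is not an ordinal, $C'$ is a non-empty final segment, and by maximality of $I_C$ it has no least element, so $I_{C'}=\emptyset$. Put $C_n:=n+C'$. Then $I_{C_n}=n$, so the $C_n$ are pairwise non-isomorphic; and each is a sibling of $C$, since $C\leq C'\leq n+C'$ by hypothesis, while $n+C'\leq C$ because $C'$ has no least element (choose $n$ points of $C'$ below some $z\in C'$ and embed $C'\leq C$ into the final segment above $z$). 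This is the same kind of initial-segment invariant you gesture at in your ``last element'' reduction, but applied where it actually does the work; if you want to salvage your approach, replacing the $C(X)$ construction by this one is the fix.
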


\begin{proof} 
First for an arbitrary chain $D$, let $I_D$ be the largest well ordered initial
segment of $D$ and let $D':= D \setminus I_D$. 

\noindent Assuming that the chain $C$ is not an ordinal, and thus $C'$ is non-empty, 
then we claim that $C_n=n+C'$ is equimorphic to $C$ for each $n \in
\omega$, and that the $C_n$'s are pairwise non-isomorphic, thus
proving our claim.

\noindent First the $C_n$ are not isomorphic since $I_{C_{n}}=n$. Further, from
the fact that $C$ is embeddable in each non-empty final segment, then
$C$ immediately embeds in $n+C'$, and moreover since $C'$ is infinite
then $C$ (and thus $n+C'$) is embeddable in $C$ for each $n$. \qed
\end{proof}

We now describe siblings of a finite sum of indecomposable order types. 

\begin{lemma}\label{indec1} 
Let $\alpha$ be an order type which is a finite sum of indecomposable
order types. If $\alpha'\equiv\alpha$, then $\alpha'$ is a finite sum
of indecomposable order types.  \\
\noindent Let $n$, resp $n'$, be minimal such
that $\alpha = \alpha_0+ \cdots+ \alpha_{n-1}$ (resp.  $\alpha' =
\alpha'_0+ \cdots+ \alpha_{n'-1}$) where each $\alpha_i$
(resp. $\alpha'_{i'}$) is indecomposable. Then $n'=n$ and
$\alpha'_i\equiv \alpha_i$ for $i<n$.
\end{lemma}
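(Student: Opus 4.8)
The plan is to argue by induction on $n$, the minimal length of a decomposition of $\alpha$ into indecomposables, using the fact (Laver, cited above) that every scattered chain is a finite sum of indecomposables, together with the stability of indecomposability under equimorphy. First I would dispose of the trivial observation that $\alpha$ is scattered: a finite sum of indecomposable order types is scattered (an indecomposable chain cannot contain a copy of $\BQ$, since $\BQ$ splits into two halves each equimorphic to $\BQ$, and a finite sum of scattered chains is scattered), hence by Laver's theorem every $\alpha'\equiv\alpha$ is also a finite sum of indecomposables; so the only content is the uniqueness-of-length statement and the piecewise equimorphy $\alpha'_i\equiv\alpha_i$.

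The base case $n=1$: here $\alpha$ is indecomposable, and since indecomposability is preserved under equimorphy, $\alpha'\equiv\alpha$ is indecomposable, so $n'=1$ and $\alpha'_0=\alpha'\equiv\alpha=\alpha_0$. For the inductive step, write $\alpha=\alpha_0+\beta$ with $\beta=\alpha_1+\cdots+\alpha_{n-1}$ a sum of $n-1$ indecomposables, and choose the analogous minimal decomposition $\alpha'=\alpha'_0+\beta'$. Fix an embedding $f:\alpha'\hookrightarrow\alpha$ and an embedding $g:\alpha\hookrightarrow\alpha'$. The key is to locate where the ``cut'' between the first summand and the rest goes under these maps. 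I would use that $\alpha_0$, being indecomposable, is either strictly left indecomposable or strictly right indecomposable (the quoted dichotomy for scattered indecomposables); say $\alpha_0$ is strictly left indecomposable, so it embeds into every nonempty initial interval of itself but not into any proper final interval — and the same holds for $\alpha'_0$ by preservation under equimorphy (one first checks $\alpha'_0\equiv\alpha_0$, which is where the real work lies). Then $g(\alpha_0)$ cannot be cofinal in a way that spills past $\alpha'_0$ into $\beta'$ too far, and symmetrically for $f$; a careful interval-chasing argument shows $g\restriction\alpha_0$ lands (up to an initial chunk that can be absorbed) inside $\alpha'_0$ and $f\restriction\alpha'_0$ inside $\alpha_0$, giving $\alpha_0\equiv\alpha'_0$, and then the restrictions of $f,g$ to the complementary final intervals witness $\beta\equiv\beta'$, so the induction hypothesis applies to $\beta,\beta'$ and finishes the proof.

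The main obstacle is precisely the step that pins down $\alpha_0\equiv\alpha'_0$: the decomposition into indecomposables is not canonical in general, and one must exploit the strict indecomposability dichotomy to show that the first indecomposable summand is an equimorphy invariant. The delicate point is that an embedding of $\alpha=\alpha_0+\beta$ into $\alpha'=\alpha'_0+\beta'$ need not send $\alpha_0$ into $\alpha'_0$ on the nose; it may start inside $\alpha'_0$ and cross over, or (if $\alpha_0$ is strictly \emph{left} indecomposable while $\alpha'_0$ happens to look strictly \emph{right} indecomposable at first glance) one has to rule out a mismatch of orientations. I would handle this by noting that strict left versus strict right indecomposability is itself preserved under equimorphy, so $\alpha_0$ and $\alpha'_0$ have the same orientation, and then: if $\alpha_0$ is strictly left indecomposable, $\alpha_0$ does not embed into any proper final segment of itself, hence $g(\alpha_0)$, which embeds $\alpha_0$, cannot be contained in $\beta'$ (else $\alpha_0\hookrightarrow\beta'$, and combined with $f$ restricted appropriately one would get $\alpha$ — or a too-long initial piece of it — embedding past its own natural cut, contradicting minimality of $n$); so $g(\alpha_0)$ meets $\alpha'_0$, and a symmetric argument with $f$ and the least-length hypothesis yields mutual embeddings between $\alpha_0$ and $\alpha'_0$. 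Once that is secured, everything downstream — $\beta\equiv\beta'$ and hence by induction $n-1=n'-1$ and $\alpha_i\equiv\alpha'_i$ for $1\le i<n$ — is routine.
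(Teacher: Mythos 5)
There is a genuine gap, and it sits exactly at the point you flag as ``where the real work lies.'' Your argument that $\alpha'_0\equiv\alpha_0$ is circular: you propose to transfer strict left (or right) indecomposability from $\alpha_0$ to $\alpha'_0$ ``by preservation under equimorphy,'' but that preservation only applies once you already know $\alpha'_0\equiv\alpha_0$, which is the very claim at stake. A priori the first summand of a minimal decomposition of $\alpha'$ could be a quite different indecomposable, and nothing in your interval-chasing sketch rules this out; in particular the contradiction you extract from ``$g(\alpha_0)\subseteq\beta'$'' never engages the property that actually makes minimal decompositions rigid. The paper's proof makes this precise: it calls a decomposition \emph{minimal} when no adjacent sum $\alpha_i+\alpha_{i+1}$ is indecomposable (a decomposition of minimal length has this property), takes a copy $A'$ of $\alpha'$ inside $\alpha$, sets $A'_i=A'\cap\alpha_i$, and from an embedding $f$ of $\alpha$ into $A'$ defines a non-decreasing index map $\varphi:n\to n$ by letting $\varphi(i)$ be the least $k$ with $\alpha_i\equiv\alpha_i\cap f^{-1}(A'_k)$. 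If $\varphi$ is not the identity it collapses two adjacent indices onto one of them, forcing $\alpha_i+\alpha_{i+1}\leq A'_i\leq\alpha_i$, i.e.\ $\alpha_i+\alpha_{i+1}$ indecomposable, contradicting minimality; hence $\varphi$ is the identity, all $n$ pieces are handled simultaneously, and no induction or orientation analysis is needed. That merging argument is the missing idea in your proposal; without it, the step ``$\alpha_0\equiv\alpha'_0$'' and the subsequent ``$\beta\equiv\beta'$'' (where the restrictions of $f,g$ need not respect the cut) remain unproved.

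A secondary but real error: a finite sum of indecomposables need not be scattered, because $\BQ$ itself is indecomposable --- every nonempty initial or final segment of $\BQ$ contains a copy of $\BQ$, which is exactly what the definition asks for; your parenthetical inverts the definition. So you cannot invoke Laver's theorem to get the first assertion of the lemma, nor the dichotomy ``strictly left or strictly right indecomposable,'' which the paper asserts only for scattered indecomposables. In the paper the first assertion falls out of the same computation: the pieces $A'_i\equiv\alpha_i$ are themselves indecomposable (indecomposability being preserved under equimorphy), so $\alpha'$ is exhibited directly as a finite sum of indecomposables.
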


For reader's convenience we give a proof. We record the proof given in
Pouzet \cite{key-P} (see Proposition II-5.7).

\begin{proof}  
We say that a decomposition of $\alpha= \alpha_0+ \cdots+
\alpha_{n-1}$, where each $\alpha_i$ is indecomposable, is
\emph{minimal} if $\alpha_i+\alpha_{i+1}$ is not indecomposable for
$i+1<n$. Clearly, a decomposition of minimal length is minimal (our
proof will show in particular that the converse holds). 

Let $\alpha= \alpha_0+ \cdots+ \alpha_{n-1}$ be a minimal
decomposition and $\alpha'\equiv \alpha$. Let $A'\subseteq \alpha$
with $A'$ of type $\alpha'$. Set $A'_i= A'\cap
\alpha_i$ for $i<n$. Clearly, $A'= \sum_{i<n} A'_i$. We claim that
$A'_i\equiv\alpha_i$ for every $i<n$, proving that the $A'_i$'s form a 
decomposition of $A'$. Indeed, since $\alpha\leq \alpha'$ there is an
embedding $f$ of $\alpha$ into $A'$. Let $i<n$; since $\alpha_i$ is
indecomposable, there is some $k<n$ such that $\alpha_i \equiv
\alpha_i \cap f^{-1}(A'_k)$; let $\varphi(i)$ be the least $k$ with
this property. This allows to define a map $\varphi$ from $n$ into
$n$. This map being order preserving then, as observed by Jullien
\cite{key-J4} Lemma 3.4.1, if it is not the identity there
is some $i<n$ such that either $i=\varphi(i)=\varphi(i+1)$ or
$i=\varphi(i-1)=\varphi(i )$ (the first case occurs if there is
$x<f(x)$ and the second case if there is some $x>f(x)$). 

In the first case $\alpha_i+ \alpha_{i+1}\leq A'_i\leq \alpha_i$,
proving that $\alpha_i+ \alpha_{i+1}$ is indecomposable and
contradicting the minimality of the decomposition of $\alpha$; the
second case is similar. 

Thus $\varphi$ is the identity map. Consequently $A'_i\equiv \alpha_i$
for all $i$ as claimed. We may observe furthermore that the
decomposition of $A'$ induced by the decomposition of $\alpha$ is
minimal.  Let $\alpha' = \alpha'_0+ \dots \alpha'_{n'-1}$ be a minimal
decomposition. Without loss of generality, we may suppose $n'\geq
n$. This decomposition of $\alpha'$ induces a decomposition of $A'$ as
$A'= \sum_{i'<n'} A''_{i'}$. As above, we may define $\varphi'$ from
$n'$ into $n$, setting $\varphi(i')=k$ where $k$ is minimum such that
$A''_{i'}\equiv A'_k$ and $A'_k\cap A''_{i'}\not = \emptyset$. Due to
the minimality of the decomposition of $\alpha'$, $\varphi'$ is one to
one, hence the identity, hence $n'=n$ and $\alpha'_i\equiv \alpha_i$
for $i<n$ as required. \qed \end{proof}

We are now ready to start counting the number of siblings of finite
sum of indecomposable order types.

\begin{lemma}\label{lem:indec2} 
Let $\alpha$ be an order type which is a finite sum of indecomposable
order types and let $\alpha = \alpha_0+ \cdots+ \alpha_{n-1}$ be a
minimal decomposition. \\
\noindent Then  for each $i<n-1$,   $\sib(\alpha)\leq
\sib(\alpha_0+\cdots +\alpha_i)\times \sib(\alpha_{i+1}+ \cdots+
\alpha_{n-1})$. \\
\noindent Furthermore, equality holds if $\alpha
_i$ is strictly right indecomposable or $\alpha_{i+1}$ is strictly
left indecomposable.
\end{lemma}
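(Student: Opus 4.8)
The plan is to prove the inequality first, then analyze when equality holds. For the inequality, fix $i<n-1$ and abbreviate $\beta = \alpha_0+\cdots+\alpha_i$ and $\gamma = \alpha_{i+1}+\cdots+\alpha_{n-1}$, so $\alpha = \beta+\gamma$ with both $\beta,\gamma$ finite sums of indecomposable order types. Given any sibling $\alpha'\equiv\alpha$, Lemma \ref{indec1} tells us $\alpha'$ is a finite sum of indecomposables with a minimal decomposition $\alpha' = \alpha'_0+\cdots+\alpha'_{n-1}$ of the same length $n$, with $\alpha'_j\equiv\alpha_j$ for each $j<n$. Setting $\beta' = \alpha'_0+\cdots+\alpha'_i$ and $\gamma' = \alpha'_{i+1}+\cdots+\alpha'_{n-1}$, we get $\beta'\equiv\beta$ and $\gamma'\equiv\gamma$, and $\alpha' = \beta'+\gamma'$. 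Moreover $\alpha'$ is determined up to isomorphism by the pair of isomorphism types of $\beta'$ and $\gamma'$ (since $\alpha'=\beta'+\gamma'$), so the map $\alpha'\mapsto(\beta',\gamma')$ is injective from $\mathrm{sib}(\alpha)$ into $\mathrm{sib}(\beta)\times\mathrm{sib}(\gamma)$. I need to check this map is well-defined at the level of isomorphism classes, i.e.\ that the decomposition of $\alpha'$ into the initial chunk of length $i+1$ and the final chunk is canonical up to isomorphism; this follows from the uniqueness clause of Lemma \ref{indec1} applied to $\alpha'$ together with associativity of the sum. That gives $\mathrm{sib}(\alpha)\leq\mathrm{sib}(\beta)\times\mathrm{sib}(\gamma)$.

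For the reverse inequality under the extra hypothesis, suppose $\alpha_i$ is strictly right indecomposable (the case $\alpha_{i+1}$ strictly left indecomposable is symmetric, using reverses). Take any $\beta'\equiv\beta$ and any $\gamma'\equiv\gamma$; I claim $\beta'+\gamma'\equiv\alpha$, so that $\beta'+\gamma'$ is a sibling of $\alpha$. The embeddings $\beta'\leq\beta$, $\gamma'\leq\gamma$ glue to $\beta'+\gamma'\leq\beta+\gamma=\alpha$, and similarly $\alpha\leq\beta'+\gamma'$; this direction is immediate and needs no hypothesis. Then I must show the map $(\beta',\gamma')\mapsto\beta'+\gamma'$ is injective up to isomorphism, i.e.\ if $\beta'_1+\gamma'_1\cong\beta'_2+\gamma'_2$ with all four pieces finite sums of indecomposables, $\beta'_k\equiv\beta$, $\gamma'_k\equiv\gamma$, then $\beta'_1\cong\beta'_2$ and $\gamma'_1\cong\gamma'_2$. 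By Lemma \ref{indec1} the minimal decomposition of $\beta'_1+\gamma'_1$ (which has length $n$, refining both the $(\beta'_1,\gamma'_1)$ cut and the $(\beta'_2,\gamma'_2)$ cut into indecomposables) is unique up to equimorphy of the factors, but I need the cut position between $\beta'$-part and $\gamma'$-part to be forced. This is where strict right indecomposability of $\alpha_i$ enters: the $i$-th indecomposable factor $\alpha'_i$ of the minimal decomposition is strictly right indecomposable (equimorphy preserves this, as recalled before Lemma \ref{finalsegment}), so it is \emph{not} embeddable in any proper final segment of itself, which pins down where the boundary between the first $i+1$ factors and the last $n-1-i$ factors must sit in any isomorphism; combined with the length-$n$ uniqueness from Lemma \ref{indec1}, an isomorphism $\beta'_1+\gamma'_1\cong\beta'_2+\gamma'_2$ must carry the first $i+1$ factors to the first $i+1$ factors, hence restricts to $\beta'_1\cong\beta'_2$ and $\gamma'_1\cong\gamma'_2$.

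The main obstacle I expect is this last injectivity step: making precise that the boundary cut is canonical. The subtlety is that a minimal decomposition into indecomposables is only unique up to equimorphy of individual factors, not up to isomorphism, and a priori an isomorphism $\beta'_1+\gamma'_1\cong\beta'_2+\gamma'_2$ could shuffle where the ``cut after the $i$-th factor'' lands. The strict-indecomposability hypothesis on $\alpha_i$ (or strict left indecomposability on $\alpha_{i+1}$) is exactly what rules out such a shift: a strictly right indecomposable chain cannot absorb any nontrivial piece on its right without the cut moving, and conversely cannot be pushed rightward. I would isolate this as a short sublemma: if $E$ is strictly right indecomposable and $E+F \cong E'+F'$ where $E\equiv E'$ are strictly right indecomposable and $F, F'$ are sums of indecomposables with $E$ not embeddable into any initial segment of $F$ (which holds here because $E$ and the first indecomposable factor of $F$ form a minimal pair by minimality of the decomposition of $\alpha$), then $E\cong E'$ and $F\cong F'$. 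Granting that sublemma, the argument closes, and the claimed equality $\mathrm{sib}(\alpha)=\mathrm{sib}(\beta)\times\mathrm{sib}(\gamma)$ follows by combining the two injections.
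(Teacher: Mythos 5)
Your overall strategy coincides with the paper's: use Lemma \ref{indec1} to see that every sibling of $\alpha$ splits as $\beta'+\gamma'$ with $\beta'\equiv\beta$ and $\gamma'\equiv\gamma$, so that the sum map from $\sib(\beta)\times\sib(\gamma)$ to $\sib(\alpha)$ is onto (giving the inequality), and then show this map is injective under the strictness hypothesis (giving equality). For the first half, one remark: you do not need, and in general do not have, well-definedness of the assignment $\alpha'\mapsto(\beta',\gamma')$. Lemma \ref{indec1} makes the decomposition canonical only up to \emph{equimorphy} of the factors, not up to isomorphism, so the cut between the first $i+1$ and the last $n-1-i$ factors need not be canonical --- indeed, the whole point of the ``furthermore'' clause is that without strictness it can fail to be. But since the pair $(\beta',\gamma')$ determines $\beta'+\gamma'$ up to isomorphism, choosing any one decomposition per sibling already yields the required injection, so the inequality survives.

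The genuine gap is in the equality half. The entire content of that clause is the injectivity of the sum map, and you reduce it to a sublemma that is essentially a restatement of the claim and is never proved. Moreover, the one concrete property you invoke to support it --- that a strictly right indecomposable chain ``is not embeddable in any proper final segment of itself'' --- is false: by the definition recalled in the paper, a strictly right indecomposable chain embeds into \emph{every} non-empty final segment of itself; it is proper \emph{initial} segments into which it does not embed. (Your heuristic that such a chain ``cannot absorb any nontrivial piece on its right'' also fails, e.g.\ $\omega\cdot\omega+\omega\equiv\omega\cdot\omega$.) The correct argument, which is the one the paper gives, runs through initial segments: if two decompositions $\overline\alpha'_i+\underline\alpha'_{i+1}$ and $\overline\alpha''_i+\underline\alpha''_{i+1}$ of the same sibling had different cut points, say $\overline\alpha'_i=\overline\alpha''_i+u$ with $u$ a non-empty proper final segment of the factor $\alpha'_i$, then writing $\alpha'_i=v+u$ and applying Lemma \ref{indec1} to $\overline\alpha''_i\equiv\overline\alpha_i$ shows that the proper initial segment $v$ of $\alpha'_i$ is equimorphic to $\alpha_i\equiv\alpha'_i$, contradicting strict right indecomposability of $\alpha'_i$. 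Without this step, or an equivalent one, your proof of the equality does not close.
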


\begin{proof}
Set $\overline \alpha_i:= \alpha_0+ \dots \alpha_{i}$ and $\underline
\alpha_{i+1}:= \alpha_{i+1}+ \dots \alpha_{n-1}$.  Let $(\overline
\beta'_i, \underline \beta'_{i+1})\in \sib(\overline \alpha_i)\times
\sib(\underline \alpha_{i+1})$ and $\vartheta (\overline \beta'_i,
\underline \beta'_{i+1})= \overline \beta'_i+ \underline
\beta'_{i+1}$. Clearly, $\vartheta$ maps $\sib(\overline
\alpha_i)\times \sib(\underline \alpha_{i+1})$ into $\sib(\alpha)$. We
claim that this maps in surjective.  Indeed, let $\alpha'\equiv
\alpha$, then according to Lemma \ref{indec1},  $\alpha' = \alpha'_0+
\dots \alpha_{n-1}$ with $\alpha'_i\equiv \alpha_i$ for $i<n$.
Setting $\overline \alpha'_i:= \alpha'_0+ \cdots+ \alpha'_{i}$ and
$\underline \alpha'_{i+1}:= \alpha'_{i+1}+ \cdots+ \alpha'_{n-1}$, we
have $(\overline \alpha'_i, \underline \alpha'_{i+1})\in \sib(\overline
\alpha_i)\times \sib(\underline \alpha_{i+1})$ and $\vartheta(\overline
\alpha'_i, \underline \alpha'_{i+1})=\alpha'$. This proves the
surjectivity; the inequality between cardinals follows. 

Now we prove that this map is one to one provided that $\alpha _i$ is
strictly right indecomposable or $\alpha_{i+1}$ is strictly left
indecomposable. We suppose that $\alpha _i$ is strictly right
indecomposable (the case $\alpha_{i+1}$ strictly left indecomposable
is similar). We prove that if $\alpha'\in \sib(\alpha)$ there is a
unique pair $(\overline \alpha'_i, \underline \alpha'_{i+1})\in
\sib(\overline \alpha_i)\times \sib(\underline \alpha_{i+1})$ such that
$\vartheta(\overline \alpha'_i, \underline
\alpha'_{i+1})=\alpha'$. The existence of such a pair was proved
above. If $(\overline \alpha''_i, \underline \alpha''_{i+1})$ is an
other pair, then either $\overline \alpha''_i$ is an initial segment
of $\overline \alpha'_i$ or $\overline \alpha'_i$ is an initial
segment of $\overline \alpha''_i$. We may suppose that $\overline
\alpha''_i$ is an initial segment of $\overline \alpha'_i$. We prove
that if fact $\overline \alpha''_i=\overline \alpha'_i$ and thus
$\underline \alpha''_{i+1}= \underline \alpha'_{i+1}$, yielding the
injectivity as required. Indeed, since $\alpha_i$ is strictly right
indecomposable and $\alpha'_i\equiv \alpha_i$ then $\alpha'_i$ is
strictly right indecomposable. If $\overline \alpha''_i$ is a proper
initial segment of $\overline \alpha'_i$, let $u$ such that $\overline
\alpha'_i= \overline \alpha''_i+u$. Then $u$ is a proper final segment
of $\alpha'_i$ (otherwise $\overline \alpha''_i\leq \alpha'_0+ \cdots+
\alpha'_{i-1}<\overline \alpha''_i$ which is impossible since
$\overline \alpha''_i\equiv \overline \alpha''_i$).  Let $v$ such that
$\alpha'_i=v+u$; then $\overline \alpha''_i=\alpha''_0+ \cdots+
\alpha''_{i-1}+v$. Since this is a minimal decomposition of $\overline
\alpha_{i}$ we have $v\in \sib(\alpha_i)$ and since $\alpha'_i$ is
strictly right indecomposable, we have $v<\alpha'_i$, a contradiction. \qed
\end{proof}

From this we deduce the following two extensions we will need. 

\begin{corollary} \label {cor:indec} 
Let $\alpha$ be an order type which is a finite sum of indecomposable
order types and let $\alpha = \alpha_0+ \cdots+ \alpha_{n-1}$ be a
minimal decomposition. \\
If $\alpha _i$ is strictly left indecomposable and $ \alpha_{i+1}$ is
strictly right indecomposable, then 
\[ \sib(\alpha)=\sib(\alpha_0+\cdots+
\alpha_{i-1})\times \sib(\alpha_i+\alpha_{i+1})\times \sib(\alpha_{i+2}+
\cdots +\alpha_{n-1}).\]
\end{corollary}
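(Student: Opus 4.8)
The plan is to derive Corollary \ref{cor:indec} by applying Lemma \ref{lem:indec2} twice, peeling off the block $\alpha_i+\alpha_{i+1}$ from the middle of the minimal decomposition. First I would invoke Lemma \ref{lem:indec2} at the cut between index $i+1$ and $i+2$: since $\alpha_{i+1}$ is strictly right indecomposable, the equality hypothesis is met (we use the ``$\alpha_i$ strictly right indecomposable'' clause of Lemma \ref{lem:indec2} with the role of ``$\alpha_i$'' played by our $\alpha_{i+1}$), giving
\[
\sib(\alpha)=\sib(\alpha_0+\cdots+\alpha_{i+1})\times\sib(\alpha_{i+2}+\cdots+\alpha_{n-1}).
\]
Then I would apply Lemma \ref{lem:indec2} again, this time to the chain $\alpha_0+\cdots+\alpha_{i+1}$ and its minimal decomposition, cutting between index $i-1$ and $i$: since $\alpha_i$ is strictly left indecomposable, the ``$\alpha_{i+1}$ strictly left indecomposable'' clause of Lemma \ref{lem:indec2} applies (with the role of ``$\alpha_{i+1}$'' played by our $\alpha_i$), yielding
\[
\sib(\alpha_0+\cdots+\alpha_{i+1})=\sib(\alpha_0+\cdots+\alpha_{i-1})\times\sib(\alpha_i+\alpha_{i+1}).
\]
Substituting the second identity into the first gives the claimed three-factor product.

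The one genuine point to check is that both applications of Lemma \ref{lem:indec2} are legitimate, i.e.\ that the relevant decompositions really are \emph{minimal} in the sense of Lemma \ref{indec1}/Lemma \ref{lem:indec2} (no two consecutive summands can be combined into an indecomposable one). The decomposition of $\alpha$ is minimal by hypothesis; and any contiguous sub-block $\alpha_0+\cdots+\alpha_{i+1}$ of a minimal decomposition is again a minimal decomposition of the order type it represents, since the ``no two consecutive summands combine'' condition is inherited by sub-intervals of the index set. So both invocations are valid. I should also note the harmless degenerate cases: if $i=0$ the first factor $\sib(\alpha_0+\cdots+\alpha_{i-1})$ is an empty sum, i.e.\ $\sib(\emptyset)=1$, and if $i+2\ge n$ the last factor is similarly $1$; in either case the formula reduces correctly, and one simply applies the relevant single instance of Lemma \ref{lem:indec2}.

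The only mild subtlety — and the step I would flag as needing care rather than being a real obstacle — is the bookkeeping of which clause of Lemma \ref{lem:indec2} is being used at each cut, since the two clauses are stated asymmetrically (``$\alpha_i$ strictly right indecomposable \emph{or} $\alpha_{i+1}$ strictly left indecomposable''). At the right-hand cut (between $i+1$ and $i+2$) it is the summand immediately to the \emph{left} of the cut, namely $\alpha_{i+1}$, that must be strictly right indecomposable — which is exactly our hypothesis. At the left-hand cut (between $i-1$ and $i$) it is the summand immediately to the \emph{right} of the cut, namely $\alpha_i$, that must be strictly left indecomposable — again exactly our hypothesis. With that matching made explicit, the proof is a two-line composition of equalities, and I would write it up accordingly.

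\begin{proof}
Apply Lemma \ref{lem:indec2} to the minimal decomposition $\alpha=\alpha_0+\cdots+\alpha_{n-1}$ at the cut between $\alpha_{i+1}$ and $\alpha_{i+2}$. Since $\alpha_{i+1}$ is strictly right indecomposable, equality holds, so
\[
\sib(\alpha)=\sib(\alpha_0+\cdots+\alpha_{i+1})\times\sib(\alpha_{i+2}+\cdots+\alpha_{n-1}).
\]
Now $\alpha_0+\cdots+\alpha_{i+1}$ is a contiguous sub-block of a minimal decomposition, hence its displayed decomposition is itself minimal. Apply Lemma \ref{lem:indec2} to it at the cut between $\alpha_{i-1}$ and $\alpha_i$. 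Since $\alpha_i$ is strictly left indecomposable, equality holds, so
\[
\sib(\alpha_0+\cdots+\alpha_{i+1})=\sib(\alpha_0+\cdots+\alpha_{i-1})\times\sib(\alpha_i+\alpha_{i+1}).
\]
Combining the two identities gives
\[
\sib(\alpha)=\sib(\alpha_0+\cdots+\alpha_{i-1})\times\sib(\alpha_i+\alpha_{i+1})\times\sib(\alpha_{i+2}+\cdots+\alpha_{n-1}),
\]
where an empty sum of summands is read as the empty chain, with $\sib=1$. This is the required equality.
\end{proof}
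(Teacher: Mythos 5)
Your proof is correct and is exactly the intended derivation: the paper states Corollary \ref{cor:indec} as an immediate deduction from Lemma \ref{lem:indec2} without writing out the details, and your two applications of that lemma (cutting after $\alpha_{i+1}$ using strict right indecomposability of $\alpha_{i+1}$, then cutting before $\alpha_i$ using strict left indecomposability of $\alpha_i$) together with the observation that contiguous sub-blocks of a minimal decomposition remain minimal is precisely the argument the authors leave to the reader.
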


\begin{corollary}\label{cor:comput}
Let $\alpha$ be an order type which is a finite sum of indecomposable
order types and let $\alpha = \alpha_0+ \cdots+ \alpha_{n-1}$ be a
minimal decomposition where each $\alpha_i$ is either strictly left or strictly right
indecomposable. Now let 
\[K =\{i<n-1: 
	\begin{array}{l} \alpha _i \mbox{ is  strictly left indecomposable and } \\
	 \alpha_{i+1}  \mbox{ is  strictly right indecomposable.} \} 
	\end{array} \]
Then,
\begin{equation} \label{formula1}
\sib(C)= \prod_{i\not \in K} \sib(\alpha_i)\times \prod_{i\in K} \sib(\alpha_i+\alpha_{i+1}).
\end{equation}
\end{corollary}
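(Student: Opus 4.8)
\textbf{Proof proposal for Corollary \ref{cor:comput}.}
The plan is to derive the formula from Lemma \ref{lem:indec2} and Corollary \ref{cor:indec} by an induction on $n$, the length of the minimal decomposition. The base cases $n=1$ and $n=2$ are immediate: if $n=1$ then $K=\emptyset$ and the formula reads $\sib(C)=\sib(\alpha_0)$; if $n=2$ then either $0\in K$, in which case the formula reads $\sib(C)=\sib(\alpha_0+\alpha_1)$ which is trivially true, or $0\notin K$, in which case we must check that $\sib(\alpha_0+\alpha_1)=\sib(\alpha_0)\times\sib(\alpha_1)$, which follows from the ``furthermore'' clause of Lemma \ref{lem:indec2} since $0\notin K$ forces (as each $\alpha_i$ is strictly left or strictly right indecomposable) that $\alpha_0$ is strictly right indecomposable or $\alpha_1$ is strictly left indecomposable.

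For the inductive step, suppose $n\geq 2$ and the formula holds for all shorter minimal decompositions. First I would observe that a minimal decomposition of a sub-sum $\alpha_0+\cdots+\alpha_i$ or $\alpha_{i+1}+\cdots+\alpha_{n-1}$ is still minimal, and the set $K$ attached to it is exactly the trace of the original $K$ on the relevant index range, except that the ``boundary'' index $i$ of the left block and the missing first index of the right block need to be handled. The cleanest bookkeeping is to pick the \emph{largest} index $i<n-1$ and split there. If $i\in K$, apply Corollary \ref{cor:indec} at that index: since $\alpha_i$ is strictly left indecomposable and $\alpha_{i+1}$ is strictly right indecomposable, and moreover $i=n-2$ so the third factor $\sib(\alpha_{i+2}+\cdots+\alpha_{n-1})$ is empty (i.e. equals $1$, as a sum over the empty index set), we get $\sib(C)=\sib(\alpha_0+\cdots+\alpha_{i-1})\times\sib(\alpha_i+\alpha_{i+1})$; applying the induction hypothesis to the minimal decomposition $\alpha_0+\cdots+\alpha_{i-1}$ (whose attached set is $K\setminus\{i\}=K\cap\{0,\dots,i-2\}$) and multiplying in the factor $\sib(\alpha_i+\alpha_{i+1})$ reproduces \eqref{formula1}. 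If instead $i=n-2\notin K$, then $\alpha_i$ is strictly right indecomposable or $\alpha_{i+1}$ is strictly left indecomposable, so Lemma \ref{lem:indec2} applied at index $i$ gives $\sib(C)=\sib(\alpha_0+\cdots+\alpha_i)\times\sib(\alpha_{i+1})=\sib(\alpha_0+\cdots+\alpha_{n-2})\times\sib(\alpha_{n-1})$; the induction hypothesis applied to $\alpha_0+\cdots+\alpha_{n-2}$ (whose attached set is $K$, since $n-2\notin K$) finishes the step.

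The main obstacle I anticipate is purely organizational rather than mathematical: one must check that the set $K$ attached to the truncated decomposition $\alpha_0+\cdots+\alpha_{i-1}$ (in the case $i\in K$) is genuinely $K\cap\{0,\dots,i-2\}$ and not something larger — in particular that no new ``strictly-left-then-strictly-right'' pair is created at the new right end, which is clear because the indices and the chains $\alpha_j$ for $j\leq i-1$ are unchanged, and the index $i-1$ can only fail to have been in $K$ before (if it were, then $\alpha_{i-1}$ strictly left and $\alpha_i$ strictly right indecomposable would contradict $i\in K$ requiring $\alpha_i$ strictly left indecomposable, as a scattered indecomposable chain cannot be both). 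A symmetric remark handles the index $i+1$ being the first index of the right block. Once this indexing is pinned down, the induction closes and \eqref{formula1} follows. (Implicitly we also use, as recalled before Lemma \ref{finalsegment}, that every equimorph of a finite sum of indecomposables is again such a sum, so that all the $\sib$ quantities appearing are well-defined via Lemma \ref{indec1}.)
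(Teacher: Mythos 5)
Your induction is correct and is exactly the deduction the paper intends: Corollary \ref{cor:comput} is stated there without proof as an immediate consequence of Lemma \ref{lem:indec2} and Corollary \ref{cor:indec}, and your splitting at the last index --- Lemma \ref{lem:indec2} when $n-2\notin K$, Corollary \ref{cor:indec} when $n-2\in K$ --- simply writes out that iteration. Your reading of the first product (over indices covered by no pair from $K$, so that $\alpha_{i+1}$ contributes no solo factor when $i\in K$) is also the right one: the literal reading would fail for, e.g., $(\omega^{\beta})^{*}+(\omega^{\alpha})^{*}\cdot\omega$ with $1\leq\alpha$, $\alpha+1\leq\beta$, where $K=\{0\}$, $\sib(C)=1$ by Proposition \ref{prop:siblingsurordinal}, yet $\sib\bigl((\omega^{\alpha})^{*}\cdot\omega\bigr)=\aleph_0$.
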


Now let $\kappa$ be a cardinal. Following Laver in \cite {key-L1}, 
we call a family $(\alpha_\lambda)_{\lambda<
\kappa}$ of order types \emph{unbounded} if for every
$\lambda<\kappa$ the set of $\mu$ such that $\alpha_{\lambda}\leq
\alpha_{\mu}$ has cardinality $\kappa$.  According to
Laver \cite{key-L1}, if $\kappa$ is regular and
$(\alpha_\lambda)_{\lambda< \kappa}$ is unbounded then $\sum_{\lambda
<\kappa}\alpha_{\lambda}$ and $\sum^*_{\lambda
<\kappa}\alpha_{\lambda}$ are indecomposable, respectively on the
right and on the left.

The following result is essentially in Jullien \cite{key-J4} and also
Laver
\cite{key-L1}.

\begin{lemma} \label{lem:wqo}
Let $\mathcal C$ be a collection of order types which is closed
downward under embeddability, that is $\alpha \in \mathcal C$ and
$\beta\leq \alpha$ imply $\beta\in \mathcal C$. Then
\begin{enumerate}
\item $\mathcal C$ is well quasi ordered under embeddability if and only 
if the collection $Ind(\mathcal C)$ of indecomposable members of
$\mathcal C$ is well quasi ordered and every member of $\mathcal C$ if
a finite sum of members of $Ind(\mathcal C)$.
\item If $\mathcal C$ is well quasi ordered under embeddability, then every unbounded $\kappa$ 
sequence $(\alpha_\lambda)_{\lambda< \kappa}$ with $\kappa$ regular
has a final sequence which is unbounded.
\item Let $\alpha$ such that for every $x<y \in \alpha$ the interval $[x,y]$ 
belongs to $\mathcal C$. If $\mathcal C$ is well quasi ordered, then
$\alpha$ is a finite sum of indecomposable order types $\alpha_0+
\dots + \alpha_{n-1}$ where:
	\begin{itemize}
	\item  $\alpha_{i}\in Ind(\mathcal C)$ for $i\not
	= 0, n-1$, 
	\item $\alpha_0\in Ind(\mathcal C)$ or $\alpha_{0}$ is a reverse
	ordinal sum of a regular unbounded sequence of members of
	$Ind(\mathcal C)$, 
	\item $\alpha_{n-1}\in Ind(\mathcal C)$ or $\alpha_{n-1}$
	is an ordinal sum of a regular unbounded sequence of members of
	$Ind(\mathcal C)$.
	\end{itemize}
\end{enumerate}
\end{lemma}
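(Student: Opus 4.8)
The plan is to treat the three parts in sequence, leaning heavily on the structure theory of Laver \cite{key-L1} and Jullien \cite{key-J4} for scattered chains and their behaviour under well-quasi-ordering. For part (1), the forward direction is straightforward: if $\mathcal C$ is \textbf{wqo} then in particular no infinite antichain of indecomposables exists, so $Ind(\mathcal C)$ is \textbf{wqo}, and the fact that every scattered chain is a finite sum of indecomposables is exactly Laver's theorem (which we have already quoted). For the converse one argues by Higman-type lemma: a finite sequence over a \textbf{wqo} set, ordered by the ``embedding into a final segment of an extension'' quasi-order, is itself \textbf{wqo}; since by hypothesis every member of $\mathcal C$ decomposes as a finite word over $Ind(\mathcal C)$, and embeddings of finite sums of indecomposables are controlled word-wise (this is precisely the content of Lemma \ref{indec1}), any infinite sequence in $\mathcal C$ yields an infinite sequence of words, hence an increasing pair of words, hence an embedding. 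The only delicate point is that Lemma \ref{indec1} gives word-embeddings for \emph{minimal} decompositions, so one must first pass to minimal decompositions and check the length is bounded along a subsequence — standard, since lengths take values in $\omega$.

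For part (2), suppose $(\alpha_\lambda)_{\lambda<\kappa}$ is unbounded with $\kappa$ regular, and suppose toward a contradiction that \emph{no} final segment $(\alpha_\lambda)_{\lambda\geq\mu}$ is unbounded. Then for each $\mu<\kappa$ there is some $\lambda(\mu)\geq\mu$ whose ``dominating set'' $\{\nu\geq\mu : \alpha_{\lambda(\mu)}\leq\alpha_\nu\}$ has size $<\kappa$; iterating and using regularity of $\kappa$, one extracts a strictly increasing $\kappa$-sequence of indices $\mu_\xi$ and types $\beta_\xi=\alpha_{\lambda(\mu_\xi)}$ such that $\beta_\xi\not\leq\beta_\eta$ for all sufficiently large $\eta>\xi$ (the later types escape each earlier type's small dominating set). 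Along such a sequence one manufactures an infinite antichain in $Ind(\mathcal C)$ — or more carefully an infinite strictly descending-or-incomparable configuration — contradicting that $\mathcal C$, hence $Ind(\mathcal C)$, is \textbf{wqo}. The bookkeeping here is the main obstacle: making the counting with the small dominating sets genuinely produce an infinite \emph{antichain} rather than merely an infinite non-cofinal set requires the regularity of $\kappa$ in an essential way, and one should phrase it as the negation of the conclusion of part (2) directly implying a failure of \textbf{wqo}; this is exactly where we invoke Laver's analysis.

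For part (3), let $\alpha$ be such that all its proper intervals lie in $\mathcal C$. By part (1) each such interval is a finite sum of members of $Ind(\mathcal C)$, so $\alpha$ itself is scattered (it has no copy of $\mathbb Q$, else some interval would fail \textbf{wqo}), hence by Laver $\alpha=\alpha_0+\cdots+\alpha_{n-1}$ with each $\alpha_i$ indecomposable; take $n$ minimal. For $i\neq 0,n-1$ the summand $\alpha_i$ is itself an interval with both a predecessor block and a successor block, so $\alpha_i\in\mathcal C$ and thus $\alpha_i\in Ind(\mathcal C)$. The two end summands $\alpha_0$ and $\alpha_{n-1}$ need not be proper intervals, so they may fail to lie in $\mathcal C$; here one analyses $\alpha_{n-1}$ (the case of $\alpha_0$ being the reverse) by looking at its initial segments, which \emph{are} proper intervals of $\alpha$ and hence \textbf{wqo}-members of $\mathcal C$ that are finite sums of indecomposables. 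Using Lemma \ref{indec1} to control how these initial segments grow, together with Laver's characterization of indecomposable scattered chains as either members of $\mathcal C$ or ordinal/reverse-ordinal sums of regular unbounded sequences from $Ind(\mathcal C)$, one concludes that $\alpha_{n-1}$ is either in $Ind(\mathcal C)$ or an ordinal sum of a regular unbounded sequence from $Ind(\mathcal C)$; part (2) is what guarantees the unbounded sequence can be taken so that its final sequences remain unbounded, which is the form we need downstream. The hardest step overall is thus the interaction in part (3) between the minimality of the decomposition and the ``escaping to the boundary'' behaviour of the end summands — i.e.\ ruling out that $\alpha_0$ or $\alpha_{n-1}$ is some more complicated indecomposable — and this is handled exactly by the regular-unbounded-sum dichotomy for indecomposables that we have recalled from Laver \cite{key-L1}.
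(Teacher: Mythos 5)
Your treatments of (1) and (2) are essentially sound. For (1), the backward direction via Higman's theorem on words together with the word-wise control of embeddings of finite sums (Lemma \ref{indec1}) is exactly the intended argument; for the forward direction you should first observe that a wqo class cannot contain the type of $\BQ$ (else, by downward closure, it contains an infinite antichain of countable types), so its members are scattered, before invoking a decomposition theorem -- and note that quoting the full Laver theorem here is heavier than needed, since the paper's later remarks make a point of getting by with only the wqo of the indecomposables. For (2) you take a genuinely different but valid route: the paper sets $I(F)=\{\beta\in\mathcal C:\beta\leq\alpha_\lambda \mbox{ for some } \lambda\in F\}$ for final segments $F$ of $\kappa$, uses the well-foundedness of the initial segments of a wqo (Higman) to choose $F_0$ with $I(F_0)$ minimal, and checks that the corresponding final sequence is unbounded; your contrapositive extraction of a bad $\omega$-sequence works too, and the ``bookkeeping'' you worry about is routine (each small dominating set is bounded by regularity, so a diagonal subsequence $\xi_0<\xi_1<\cdots$ with $\beta_{\xi_m}\not\leq\beta_{\xi_n}$ for all $m<n$ exists, contradicting wqo directly). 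No ``Laver analysis'' is needed there, only the definition of wqo.

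The genuine gap is in (3), at exactly the step that carries the content of the statement. Having written $\alpha=\alpha_0+\cdots+\alpha_{n-1}$ by Laver's decomposition theorem and disposed of the middle terms (correctly: each sits inside a bounded interval of $\alpha$, hence lies in $\mathcal C$), you handle the end term $\alpha_{n-1}$ by appealing to ``Laver's characterization of indecomposable scattered chains as either members of $\mathcal C$ or ordinal/reverse-ordinal sums of regular unbounded sequences from $Ind(\mathcal C)$.'' No such characterization relative to an arbitrary downward-closed $\mathcal C$ exists off the shelf; that dichotomy for the end terms is precisely what (3) asserts, so as written your argument is circular at its key step. The derivation the paper intends builds the decomposition directly from (1) and (2) and needs no prior appeal to Laver: if $\alpha$ has no last element, let $\lambda=\mathrm{cf}(\alpha)$ (regular) and write a cofinal part of $\alpha$ as a $\lambda$-sum of bounded intervals; each such interval is in $\mathcal C$, hence by (1) a finite sum of members of $Ind(\mathcal C)$, and refining yields a $\lambda$-sum of members of $Ind(\mathcal C)$ (the refined index set still has type $\lambda$ because $\lambda$ is regular and the blocks are finite); by (2) some final segment of this refined sequence is unbounded, and the part of $\alpha$ preceding it is bounded, hence in $\mathcal C$, hence again a finite sum of members of $Ind(\mathcal C)$ by (1). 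The symmetric argument at the left end completes the decomposition, with indecomposability of the unbounded end sums supplied by Laver's quoted observation. Finally, your stated role for (2) (``guarantees the unbounded sequence can be taken so that its final sequences remain unbounded'') is backwards: (2) is what produces an unbounded final sequence from an arbitrary cofinal one in the first place.
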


The proof of (1) relies on Higman's theorem on words
\cite {key-H}. For the proof of (2), set $I(F):= \{\beta\in \mathcal
C: \beta \leq \alpha_{\lambda}  \mbox{ for some } \lambda\in F\}$
for each final segment $F$ of $\kappa$. Since $\mathcal C$ is well
quasi ordered, the set of its initial segments is well founded (Higman
\cite{key-H}). Let $F_0$ such that $I(F)$ is minimal. Then
$(\alpha_\lambda)_{\lambda\in F_{0}}$ is unbounded. The proof of (3)
follows from (1) and (2).

\medskip

We will use the following consequence of Lemma \ref{lem:wqo}. Let $S$
be the class of surordinals. As proved directly by Jullien \cite
{key-J1}, $S$ is well quasi ordered, hence the class $U$ of
surordinals and their dual is well quasi ordered. Thus (by (1) of
Lemma \ref{lem:wqo}), the class $\Sigma (U)$ of finite sums of members
of $U$ is well quasi ordered. According to (2) of Lemma \ref{lem:wqo},
if each interval $[x,y]$ of a chain $C$ belongs to $\Sigma (U)$, then
$C$ is a finite sum $C_0+ \dots + C_{n-1}$ where $C_{i}\in Ind(U)$ for
$i\not = 0, n-1$, $C_0$ and $C_{n-1}$ are a reverse ordinal sum and an
ordinal sum of regular unbounded sequences of members of $Ind(U)$.

A famous theorem of Laver \cite{key-L1}, answering positively
Fra\"{\i}ss\'e's conjectures on chains asserts that the class
$\mathcal D$ of scattered order types is well quasi ordered under
embeddability (see the exposition by Rosenstein in \cite{key-R} and
Fra\"{\i}ss\'e in \cite{key-F}). From this follows that every
scattered order type is a finite sum of indecomposable order types. A
consequence is the following Lemma \ref{lem-0-equiv finie}.

\begin{lemma} \label{lem-0-equiv finie} 
If a scattered chain $C$ is indecomposable and infinite, there is some
equimorphic chain $C'$ such that all the $\equiv_0$-equivalence
classes are infinite.
\end{lemma}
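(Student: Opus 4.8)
The plan is to bring $C$, via Laver's structure theory for scattered chains, into an explicit normal form in which the $\equiv_0$-classes are visibly infinite, and to run a well-founded induction along the embeddability quasi-order on scattered order types (well-founded by Laver's theorem that the scattered types are well quasi ordered).

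First I would reduce to the case that $C$ is strictly right indecomposable: reversal sends each $\equiv_0$-class to its reverse, of the same cardinality, and exchanges strict left- and strict right-indecomposability, and by the recalled fact a scattered indecomposable chain is one or the other. Then I would note that $C$ has no greatest element (otherwise $C$ would embed into the one-point final interval of the corresponding decomposition), and hence that every bounded interval $[x,y]$ of $C$ is \emph{strictly} below $C$ under embeddability: since $(y,+\infty)\neq\emptyset$, an embedding of $C$ into $[x,y]\subseteq(-\infty,y]$ would contradict strict right indecomposability. Therefore, taking $\mathcal C$ to be the class of scattered order types strictly below $C$ — downward closed, and well quasi ordered as a subclass of the scattered types — Lemma~\ref{lem:wqo}(3) applies to $C$. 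Since $C$ is itself indecomposable the finite-sum part of that conclusion has length one, and $C\notin Ind(\mathcal C)$; moreover a reverse ordinal sum of a regular unbounded sequence is left indecomposable, so cannot be our $C$, and we land in the remaining alternative
\[ C=\sum_{\lambda<\kappa}A_\lambda, \]
for some regular cardinal $\kappa$, a sequence $(A_\lambda)_{\lambda<\kappa}$ unbounded under embeddability, with each $A_\lambda$ indecomposable and \emph{strictly below} $C$.

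Now the induction. If every $A_\lambda$ is finite then each is a one-point chain and $C=\kappa$, an indecomposable ordinal (an $\omega$-power), whose $\equiv_0$-classes are all copies of $\omega$; take $C'=C$. Otherwise put $\Lambda=\{\lambda:A_\lambda\text{ infinite}\}$; unboundedness forces $\Lambda$ to be cofinal of size $\kappa$ with $(A_\lambda)_{\lambda\in\Lambda}$ still unbounded. For $\lambda\in\Lambda$ the chain $A_\lambda$ is an infinite indecomposable scattered chain strictly below $C$, so the induction hypothesis yields $A'_\lambda\equiv A_\lambda$ with all $\equiv_0$-classes infinite; set $C'=\sum_{\lambda\in\Lambda}A'_\lambda$. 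Term by term $C'\equiv\sum_{\lambda\in\Lambda}A_\lambda\leq C$ (a subchain). For the reverse embedding $C\leq\sum_{\lambda\in\Lambda}A_\lambda$ I would write $C=\sum_{\xi<\kappa}(\gamma_\xi+A_{\lambda_\xi})$, where $\gamma_\xi$ is the ordinal obtained by summing the finite blocks lying in the $\xi$-th gap between consecutive members of $\Lambda$ (so $\gamma_\xi<\kappa$), and embed each summand $\gamma_\xi+A_{\lambda_\xi}$ into a short run of consecutive blocks of $\sum_{\lambda\in\Lambda}A_\lambda$: route $\gamma_\xi$ diagonally, one point into each of the next $\gamma_\xi$ blocks, and $A_{\lambda_\xi}$ into a single later block chosen, by unboundedness, to receive it; regularity of $\kappa$ ensures that fewer than $\kappa$ blocks are consumed before stage $\xi$, so these images can be made disjoint and increasing. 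Hence $C\equiv C'$, and every $\equiv_0$-class of $C'$ is contained in a single $A'_\lambda$, hence infinite by hypothesis, or else spans the boundary of two consecutive summands $A'_\lambda,A'_{\lambda'}$ — it cannot meet three, since the middle one is infinite — and there it is the union of the last $\equiv_0$-class of $A'_\lambda$ with the first of $A'_{\lambda'}$, again infinite.

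I expect the principal obstacle to be the first two reductions: checking precisely that $C$ satisfies the hypotheses of Lemma~\ref{lem:wqo}(3) relative to a fixed downward-closed wqo class, and — what makes the induction legitimate — that the indecomposable summands it returns are \emph{strictly} below $C$ in the embeddability order. Once that normal form is in hand, the absorption argument of the last paragraph, though it requires a little care at the limit stages of $\kappa$, is routine.
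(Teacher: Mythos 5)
Your argument is correct, and it is in fact more of a proof than the paper supplies: the paper states this lemma as a ``consequence'' of Laver's theorem without proof, and then only establishes the weaker special case it actually needs (for $C$ an infinite member of $Ind(U)$, where there are no finite $\equiv_0$-classes at all, or an unbounded $\omega$-sum of such, where one simply drops the finite summands). Your inductive step is exactly that drop-the-finite-blocks argument, but you add the two ingredients needed to make it a genuine proof of the full statement: the reduction, via Lemma~\ref{lem:wqo}(3) applied to the downward-closed wqo class of types strictly below $C$, to an unbounded regular sum of indecomposables strictly below $C$, and the well-founded recursion (legitimate because wqo forbids infinite strictly descending chains) that handles the infinite summands. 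One small imprecision: indecomposability of $C$ does not by itself force the finite sum returned by Lemma~\ref{lem:wqo}(3) to have length one, since that decomposition need not be minimal (e.g.\ $\omega^{\omega}=\omega+\sum_{n}\omega^{n}$ meets the lemma's conditions with $n=2$). But this is harmless: strict right indecomposability gives $C\leq\alpha_{n-1}$, while $\alpha_{n-1}\leq C$ as a final segment, so $C\equiv\alpha_{n-1}$, and $\alpha_{n-1}$ cannot lie in $Ind(\mathcal C)$ (all of whose members are strictly below $C$); hence $C$ is at least \emph{equimorphic} to an unbounded regular ordinal sum of members of $Ind(\mathcal C)$, which is all your construction requires. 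The rest — cofinality and unboundedness of $\Lambda$, the diagonal re-embedding of the finite blocks using regularity of $\kappa$, and the observation that every $\equiv_0$-class of $C'$ contains an $\equiv_0$-class of some $A'_\lambda$ — checks out.
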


We need a weaker statement, namely the conclusion of this lemma when
$C$ is an infinite member of $Ind(U)$ or an $\omega$-sum of an
unbounded sequence of members of $Ind(U)$.  This does not require the
well quasi order of $\mathcal D$, which is proved by means of
Nash-Williams theory of better quasi ordering, but only the well quasi
ordering of $Ind(U)$. The interest of this weakening could be in the
programme of reverse mathematics, see for example Montalb\'an
\cite{key-M}. The proof of this weakening is straightforward. Given a
chain $C$, let $F_{\equiv_0} (C)$ be the set of $x\in C$ such that the
$\equiv_0$-class of $x$ is finite. Note that $F_{\equiv_0}(C)$ is
empty if $C$ is a surordinal without a largest element, hence
$F_{\equiv_0}(C)$ is empty if $C$ is an infinite member of
$Ind(U)$. Now if $C$ is an $\omega$-sum of an unbounded sequence of
members of $Ind(U)$, say $C=\sum_{n<\omega} C_n$, let $A$ be the set
of $C_n$ which are infinite (and in fact have more than one
element). If $A$ is finite then it is empty and $C$ is the chain
$\omega$, and  hence $F_{\equiv_0}(C)$ is empty. If $A$ is infinite, let
$C'= \sum_{n\in A} C_n$.  Then $F_{\equiv_0}(C')$ is empty; now, as it
is easy to see using the unboundedness of the sequence of $C_n$, $C'$
is equimorphic to $C$, giving the required conclusion.

%Note que si toute $\equiv_0$-equivalence classe est infinie alors $2.C\leq C$. 
%
%
%\begin{lemma}\label {lem-2C3}Let $C= C_1+C_2$ be  chain with $C_1$ infinite,  left-indecomposable and $C_2$ infinite right-indecomposable. If $2C\leq C$ and $\omega+\omega^*\leq C$ then $\sib(C)\geq\aleph_0$. 
%\end{lemma} 
%
%\begin{proof}
%We prove that under the stated conditions, some chain equimorphic to $C$ has a finite $\equiv_0$ equivalence class and we apply Lemma  \ref{lem-2C1}. Let $A+B$ be a subset of $C$ with $A$ of type $\omega$, $B$ of type $\omega^*$. Either $A\subseteq C_1$ or $B\subseteq C_2$. With no loss of generality we may suppose $A \subseteq C_1$. Pick $x \in A$. Since $C_1$ is left indecomposable, it is embeddable into $(\leftarrow x]$, hence this set contains a copy   $A'$ of $A$. Let $C':= (\leftarrow x[\cup \{x\} \cup ]x \rightarrow$. Then $C'$ is equimorphic to $C$ and the $\equiv_0$-class of $x$ is reduced to $\{x\}$.
%\end{proof}
%
% \begin{corollary}
%Let $C$ be a scattered chain. If $C= C_1+C_2$ with $C_1$ left-indecomposable and not reversely well ordered and $C_2$ right-indecomposable and not well ordered then $\sib(\alpha)\geq \aleph_0$.
%\end{corollary}
%\begin{proof} Necessarily  each $C_i$ is  infinite. Hence from Lemma \ref{lem-2C2} $2C_i\leq C_i$, hence $2C\leq C$. According to Lemma \ref{lem-2C1} $\sib(C)\geq \aleph_0$. 
%
%\end{proof}

% j'utiliserai aussi le fait que  ces ind\'ecomposables $\alpha$ v\'erifient    $2\alpha\leq \alpha$
%Ce qui donnera le th\'eor\`eme 2 sans utiliser la force du th\'eor\`eme de Laver. 
%

\medskip

We now tackle the proofs of Propositions \ref{prop:siblingsurordinal}
and \ref{prop:manysibling} which will lead us to that of Theorem
\ref{thm:scattered} on the characterization of scattered sets with a
small number of twins.

\begin{proof} (of  Proposition  \ref{prop:siblingsurordinal})

We may suppose that $C$ is not an ordinal (otherwise $\sib(C)=1$). In
this case, $C$ decomposes as $C'+D$ where $C'= \sum^*_{n<\omega}C_n$,
$(C_n)_{n<\omega}$ is a non-decreasing sequence of ordinals of type
$\omega^{\alpha_n}$ and $D$ is an ordinal, this ordinal being $0$ if
$C$ is pure and, otherwise, a non-increasing sequence $D_0+\cdots
+D_{n_0}$ of ordinals $D_i$ of type $\omega^{\beta_i}$ with
$\alpha_n+1\leq\beta_0$ for every $n$.

If $(C_n)_{n<\omega}$ is stationary and $\alpha=
max\{\alpha_n:n<\omega\}$, then we may rewrite $C$ as $\omega^{\alpha}
\cdot \omega^*+\gamma$, with $\gamma<\omega^{\alpha+1}$ if $C$ is
pure, and $\omega^{\alpha} \cdot \omega^*+\omega^{\beta}+\gamma$ where
$\alpha+1\leq \beta$ otherwise. This yields $\sib(C)=\vert
\omega^{\alpha+1}\vert=\vert C\vert$ if $C$ is pure and $\alpha\not =
0$, and $\sib(C)=1$ otherwise. 

If $(C_n)_{n<\omega}$ is non-stationary, then $\sib(C)=\vert C'\vert
^{\aleph_0}$. Indeed, let $\mu=sup\{\omega^{\alpha_{n}}+1:n<\omega\}$
and $(C'_n)_{n<\omega}$ be a non-decreasing sequence of ordinals of
types $\omega^{\alpha'_{n}}$ such that
$sup\{\omega^{\alpha'_{n}}+1:n<\omega\}= \mu$. Then
$\sum^*_{n<\omega}C'_n$ is equimorphic to $\sum^*_{n<\omega}C_n$ and
is isomorphic to this sum if and only if $(C'_n)_{n<\omega}=
(C_n)_{n<\omega}$. Since there are at least $\vert \mu\vert
^{\aleph_0}$ such sequences and $\vert \mu\vert ^{\aleph_0}= \vert
C'\vert ^{\aleph_0}$, this does the case $D=0$. If $D\not =0$, the
fact that $\sum^*_{n<\omega}C'_n+ D$ is isomorphic to $C$ implies that
some final sequence of $(C'_n)_{n<\omega}$ coincide with a final
sequence of $(C_n)_{n<\omega}$. This again yields at least $\vert
\mu\vert ^{\aleph_0}= \vert C'\vert ^{\aleph_0}$ equimorphic
types. 

This completes the proof of the proposition. \qed
\end{proof}

\begin{proof} (of Proposition \ref{prop:manysibling})

We prove this proposition by induction on the Hausdorff rank of the
given chain $C$.  Suppose that either $(\omega^*+\omega) \cdot \omega$
or $(\omega^*+\omega) \cdot \omega^*$ are embeddable into $C$ and that
$\sib(C')\geq \CC$ for every chain $C'$ with this property and smaller
Hausdorff rank.  Let $\alpha= h(C)$, and note that necessarily $\alpha
\geq 1$. Now for $\alpha'<\alpha$, if $C'$ is an
$\equiv_{\alpha'}$-equivalence class, Lemma \ref{lem:sib} ensures that
$\sib(C)\geq \sib(C')$, hence if $\sib(C')\geq \CC$ we will have
$\sib(C)\geq \CC $ as required. This will be the case if either
$(\omega^*+\omega) \cdot \omega$ or $(\omega^*+\omega) \cdot \omega^*$
are embeddable into $C'$.  Indeed, the induction hypothesis insures
that $\sib(C')\geq \CC$. Hence we may suppose that neither
$(\omega^*+\omega) \cdot \omega$ nor $(\omega^*+\omega) \cdot
\omega^*$ are embeddable into $C'$ and $\sib(C')< \CC$ (in fact, with
Lemma \ref{lem:sib} we can suppose that except for finitely many, all
$\equiv_{\alpha'}$-equivalence classes $C'$ satisfy $\sib(C')=1$).

\noindent We claim that we may write $C$ as a finite sum $C=C_0+
\cdots +C_{n-1}$ where $C_{i}\in Ind(U)$ for $i\not = 0, n-1$,
$C_{0}\in Ind(U)$ or $C_{0}$ an unbounded $\omega^*$-sum of members
of $U$, namely $C_{0}= \sum^*_{n<\omega}A_{n}$, $C_{n-1}\in Ind(U)$ or
$C_{n-1}$ is an unbounded $\omega$-sum $C_{n-1}=\sum_{m<\omega}
B_{m}$.  For that claim, we distinguish two cases. \\
\noindent Case 1: $\alpha$ is
a successor ordinal, $\alpha=\alpha'+1$. In this case,
$D_{\alpha'}=C/\equiv_{\alpha'}$ is either an integer ($\not =1$),
$\omega$, $\omega^*$ or $\omega^*+\omega$. The chain $D$ cannot be
finite (otherwise either $(\omega^*+\omega) \cdot \omega$ or
$(\omega^*+\omega) \cdot \omega^*$ would be embeddable into some
$\equiv_{\alpha'}$-equivalence class). Since each $\equiv_{\alpha'}$-
equivalence class belongs to $\Sigma(U)$ we may rewrite $C$ as a sum,
indexed by $\omega$, $\omega^*$ or $\omega^*+\omega$, of members of
$Ind(U)$. According to (2) of Lemma \ref{lem:wqo}, $C$ has a
decomposition as above. \\
\noindent Case 2. $\alpha$ is a limit ordinal.  In this
case, from (2) of Lemma \ref{lem:wqo}, $C$ has a decomposition where
$C_{0}\in Ind(U)$ or $C_{0}$ is a unbounded $\kappa^*$-sum of members
of $Ind(U)$, namely, $C_{0}= \sum^*_{\alpha<\kappa}A_{\alpha}$, and
$C_{n-1}\in Ind(U)$ or $C_{n-1}$ is an unbounded $\lambda$-sum of
members of $Ind(U)$, namely, $C_{n-1}= \sum_{\beta<\lambda}B_{\beta}$,
with $\kappa, \lambda$ regular cardinals. Necessarily,
$\kappa=\lambda=\omega$. Indeed if, for an example $\lambda >\omega$, 
then since neither $(\omega^*+\omega) \cdot \omega$ nor
$(\omega^*+\omega) \cdot \omega^*$ are embeddable into $
\sum_{\beta<\beta'}B_{\beta}$ for $\beta'<\lambda$ we have the same
property for $C_{n-1}$ hence $C_{n-1}\in \Sigma (U)$ and refining the
decomposition we may suppose $C_{n-1}\in Ind(U)$. According to the
weakening of Lemma \ref{lem-0-equiv finie}, each infinite $C_{i}$ is
equimorphic to some $C'_{i}$ such that such that all the
$\equiv_0$-equivalence classes are infinite. Thus replacing those
$C_{i}$ by $C'_{i}$ we get an equimorphic chain $C'$ having only
finitely many finite $\equiv_0$-equivalence classes, each made of
consecutive $C_{i}$ of size $1$. Let $M$ be maximum of their
size. Either $C'_0$ or $C'_{n-1}$ does not belong to $Ind(U)$. Suppose
$C'_{n-1}\not \in Ind(U)$. Let $\varphi\in \CC $ and
$E_{\varphi(m)}$ be the chain $\omega + \varphi(m) + M + \omega^*$. Set
$C'_{\varphi}$ obtained by substituting $C'_{n-1,
\varphi}=\sum_{m<\omega}(B'_{m}+E_{\varphi(m)})$ to $C'_{n-1,
\varphi}=\sum_{m<\omega}B'_{m}$. Since $C'_{n-1}$ is an unbounded sum
containing $(\omega^*+\omega) \cdot \omega$, $C'_{n-1,\varphi}$ is
equimorphic to $C'_{n-1}$, hence $C'_{\varphi}$ is equimorphic to
$C'$. Furthermore, if $\varphi \not = \varphi'$ then $C'_{\varphi}\not
\simeq C'_{\varphi'}$. Hence $\sib(C')\geq \CC$ as
claimed.  \qed
\end{proof}

We now have all the tools to complete the proof of Theorem
\ref{thm:scattered}.

\begin{proof}[of Theorem \ref{thm:scattered}] 

$(i) \Rightarrow (ii)$. \\
\noindent Suppose that $C$ is scattered and $\sib(C)= \kappa < \CC$. 
According to Propositions \ref{prop:manysibling} and
\ref{prop:finitesum} , $C$ is a finite sum $\sum_{j<m} D_j$ of
surordinals and their reverse, and we may suppose $m$ minimum.  We prove
that in this case $max \{sib (D_j): j<m\}= \sib(C)$, and for this we do
not require that $\sib(C)< \CC$.

\noindent According to Lemma \ref{lem:equivclasses}, each $D_j$ is contained into
some equivalence class $\overline D_j$ of $\equiv_{well}$ or of
$\equiv_{well^*}$. We can write $\overline D_j= U+D_j+V$, and we claim
that $\sib(D_j)=\sib(\overline D_j)$. Indeed, if $U$ and $V$ are
finite, this follows from the computation of the siblings in
Proposition \ref{prop:siblingsurordinal}. If $U$ is infinite then since $m$
is minimum, $U=D_{j-1}\cap D_j$, hence $U\subset \overline D_{j-1}\cap
\overline D_j$; furthermore Lemma \ref{lem:meet} applies and hence
$\overline D_{j-1}\cap \overline D_j$ has order type
$\omega^*+\omega$. From this, it follows that $\overline D_j$ has
order type $\omega^*+\omega +Ê\lambda$ for some ordinal $\lambda$ and
furthermore $V$ is finite (apply Lemma \ref{lem:meet}).  Thus in this
case $\sib(D_j)=\sib(\overline D_j)=1$. Since $\sib(\overline D_j)\leq
sib (C)$ by Lemma \ref{lem:numbersibling}, we have $\sib( D_j)\leq sib
(C)$. Setting $\kappa'= max \{sib (D_j), j<m\}$ we obtain $\kappa'\leq
\sib(C)$.
 
\noindent We prove that $\kappa'= \kappa$. If $\kappa=1$,
then since $\kappa'\leq \kappa$ the property holds. We may suppose
$\kappa>1$. Since each $D_j$ is a surordinal or the reverse of a
surordinal, it has a decomposition as a finite sum of indecomposable
chains, hence $C$ has a decomposition as a finite sum $C= \sum_{i<n}
C_i$ where the $C_i$'s are indecomposable, and again we may suppose
$n$ minimum. Since these indecomposables are either strictly right or
strictly left indecomposable, Formula \ref{formula1} from Corollary
\ref{cor:comput} applies. Each $C_i$ is a surordinal or the reverse of
a surordinal, (indeed since $C_i\subseteq \sum_{j<m} D_j$ and $C_i$ is
indecomposable, $C_i$ is embeddable into some $D_j$; since $D_j$ is
either a surordinal or the reverse of a surordinal, the assertion
follows). It follows from Proposition \ref{prop:siblingsurordinal}
that $\sib(C_i)$ and $\sib(C_i+C_{i+1})$ are $1$ or infinite, thus
$\sib(C)= max\{ \sib(C_i): i\not \in K, \sib(C_i+C_{i+1}): i\in K\}$. If
$sub(C_i)= \kappa$, let $j<m$ such that $C_i\setminus D_j<C_i$. In
this case $sub(C_i\cap D_j)= \kappa$, and hence $sub(D_j)=\kappa$. If
$i\in K$ and $\sib(C_i+C_{i+1})=\kappa$, then since $\kappa>1$,
$\kappa=max\{\sib(C_i), \sib(C_{i+1})\}$. There is some $i<m$ such that
$\sib((C_i+C_{i+1})\cap D_j)=\kappa$ (note that if $C_i\setminus
D_j<C_i$, we have $sub(C_i \cap D_j)=sub (C_i$)), hence
$\sib(D_j)=\kappa$.

$(ii) \Rightarrow(i)$.  Since $C$ is a finite sum of surordinal and
reverse of surordinals, $C$ is scattered. The fact that
$\sib(C)=\kappa$ follows from that fact that $max \{sib (D_j), j<m\}=
\sib(C)$ proved just a above. \qed
\end{proof}

We now provide the argument for Corollary \ref{cor:corollary2}.

\begin{proof} [of Corollary \ref{cor:corollary2}]

Note first that part (2) follows from part (1), thus let us prove that (1) holds. 

Suppose that $C$ is scattered and $\sib(C)=\kappa < \CC$. According to
Theorem 2, $C$ is a finite sum $\sum_{j<m} D_j$ of surordinals and
their reverse and, $m$ being minimum, $\kappa= max \{\sib(D_j):j<m\}$.
According to Proposition
\ref{prop:siblingsurordinal}, each $D_j$ or its reverse is either an
ordinal, a surordinal of the form $\omega^{\alpha} \cdot
\omega^*+\gamma$ with $\gamma<\omega^{\alpha+1}$ if $D_j$ is pure, and
$\omega^{\alpha} \cdot \omega^*+\omega^{\beta}+\gamma$ with $\alpha+1
\leq \beta$ otherwise. 

\noindent We may write $D_j= D_{j,0}+D_{j,1}$ with
$D_{j,0}=\omega^{\alpha} \cdot \omega^*$ and $D_{j,1}=\gamma$ if $D_j$
is pure (and $\alpha\not =0$), and $D_{j,0}=\omega^{\alpha} \cdot
\omega^*+\omega^{\beta}$ and $D_{j,1}\gamma$ if $D_j$ is not
pure. From this we obtain a decomposition as stated in the
corollary. 

\noindent In any other decomposition, the number of components of the
form $\omega^{\alpha} \cdot \omega^*$ or its reverse cannot be smaller
(otherwise, in order to be eliminated in such other decomposition, a
component $D_{j,0}=\omega^{\alpha} \cdot \omega^*$ will appear in a
surordinal of the form $\omega^{\alpha} \cdot
\omega^*+\omega^{\beta}+\gamma$ with $\alpha+1
\leq \beta$. Due to the minimality of $m$, this surordinal must be an
initial segment of $D_j$, which is impossible).  Since
$\sib(D_{j,0})=\sib(D_j)$, $\kappa$ is the maximum of the cardinality of
the pure $D_{j,0}$ (distinct from $\omega^*$) and their reverse.  

Now, conversely, suppose that $C$ has a decomposition $\sum_{i<n} C_i$
as stated. Then, since the members of this decomposition are
surordinals and their reverse, $C$ is scattered and according to
Theorem 2, $C$ is a finite sum $\sum_{j<m} D_j$ of surordinals and
their reverse and, $m$ being minimum, $\kappa= max
\{\sib(D_j):j<m\}$. Since $\omega^{\alpha} \cdot \omega^*$ is indecomposable,
each $C_i$ of this form is embeddable into some $D_j$. If $\alpha\geq
1$,the minimality of $n$ ensures that $D_j=\omega^{\alpha} \cdot
\omega^*+\gamma$ with $\gamma<\omega^{\alpha+1}$, hence $\sib(D_j)=
\vert  \omega^{\alpha} \cdot \omega^*\vert$. Since for the other $D_j$'s or
their reverse, we have $\sib(D_j)=1$, we obtain that $\kappa$ is the
maximum of the cardinality of the $C_i$ of the form
$\omega^{\alpha} \cdot \omega^*$ (with $\alpha\geq 1$) or its reverse. \qed
\end{proof}

The next argument proves the full characterization of chains with a
small number of siblings.

\begin{proof} [of Theorem \ref{thm:fullchar}]

We may assume that $C$ is non-scattered, and write $C=\sum_{i \in D}
C_i$ where each $C_i$ is scattered and $D$ is dense and infinite.

First if $\sib(C)=\kappa < \CC$, then by Lemma \ref{lem:interval} any
embedding must preserve each $C_i$. But now Lemma \ref{lem:sib}
immediately yields the remaining  properties. 

Conversely, assume that $C$ has the prescribed decomposition. Then for
a given $C' \equiv C$ we may assume that $C'\subseteq C$, and thus we
can define $C'_i= \ C_i \cap C'$ which must then be scattered. Moreover
any embedding $f:C \rightarrow C'$ must by assumption satisfy $f(C_i)
\subseteq C'_i$ and thus each $C_i' \equiv C_i$. This immediately
gives $\sib(C) \leq \kappa$. Since clearly we also have $\sib(C)\geq
\kappa$, then $\sib(C)=\kappa$ as desired. \qed
\end{proof}

Finally it remains to complete the proof of Proposition
\ref{prop:allctble}.

\begin{proof} [of Proposition \ref{prop:allctble}]
 
Let  $C = \sum_{i \in D} C_i$ where each $C_i$ is scattered and $D$ is a
countably infinite dense chain, we must show that $\sib(C) \geq \CC$.

\noindent Since $D$ is dense, each $C_i$ is an
$\equiv_{h(C)}$-class and we may apply Lemma 2.  If some $C_i$ has
$\CC$ sibling or infinitely many $C_i$ have more than one siblings
then $C$ has $\CC$ siblings. Thus we may suppose that each $C_i$
except finitely many have one sibling, those exceptions having less
than $\CC$ siblings. According to Theorem 2, each $C_i$ is a finite
sum of surordinals and reverse surordinals, thus as proved by
Jullien\cite {key-J1} the set $Q:= \{C_i: i\in D\}$ is well quasi
ordered.  Let $Q$ be the collection of initial segments of $Q$, that
is $I(Q)= \{I\subseteq Q: C\in I , C'\in Q \mbox{ and } C'\leq
C\Rightarrow C'\in I\}$. Then by a result of Higman, $Q$ is well quasi
ordered exactly when $I(Q)$ is well founded. Thus, in our case $I(Q)$
is well founded.

Now for an interval $J$ of $D$ with at least two elements, define:
\[ I(J) = \{C_i \in Q: \; \exists j \in J \; \mbox{ such that } \;  C_i \leq C_j \} \]
Since each $I(J)$ is an initial segment of $Q$, we can choose an interval $J$
so that $I(J)$ is minimal under inclusion among all such collections
of the form $I(J')$.

\noindent We shall now produce a non-trivial order preserving map $h:J
\rightarrow J$ such that $C_x \leq C_{h(x)}$ for all $x \in J$.
Since such a map $h$ can be extended to the identity on $D$ outside
$J$, it immediately yields a map $f$ as in by Lemma \ref{lem:interval}, 
giving $\sib(C) \geq \CC$ as desired.

To construct $h$, first (using the fact that $Q$ is well
quasi-ordered) choose $x \neq y \in J$ such that $C_x \leq C_y$ and
define $h(x)=y$. Since $D$ and thus $J$ is countable, it suffice to show that given
$x \in J$ and $h$ defined on a finite set of $J$ can always be extended to
$x$. 

So assume that $h$ is defined on $x_0<x_1< \cdots x_{n-1}$, and let $x
\in A_i=(x_{i-1},x_i)$. If $A_i'=(h(x_{i-1}),h(x_i))$, then
$I(A_i)=I(J)=I(A_i')$ by minimality, so there must be $y \in A_i'$
such that $C_x \leq C_y$, and simply define $h(x)=y$.

This completes the proof. \qed
\end{proof}

\section{Conclusion} \label{sec:conc}

In this paper we studied equimorphy in the natural case of chains,
providing some structure results for those having a small number of
siblings.  Our study was motivated by the following tree alternative
conjecture of Bonato and Tardif \cite{key-BT}: for every tree $T$, the
number of trees (counted up to isomorphy) which are equimorphic to T,
is one or infinite. Partial results were obtained so far by Tyomkim
\cite{key-T} and extended to graphs by Bonato et al. \cite{key-BBDS}.
They asked if for every (connected) undirected graph G the number of
(connected) graphs (counted up to isomorphy) which are equimorphic to
G, is one or infinite. Notice that if one considers connected graphs
with loops the conjecture is false. Indeed consider the following
undirected graph $G$ with loops.

\begin{center}
\includegraphics[width=4in]{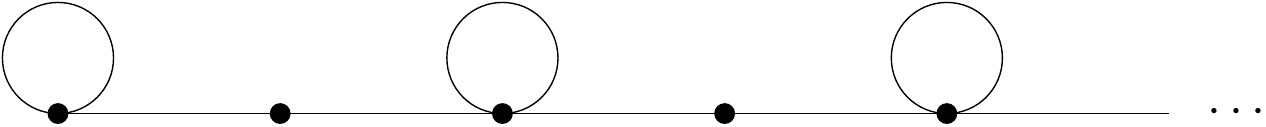}
\end{center}

\noindent One can easily verify that in this case $\sib(G)=2$, 
with the following graph its only non-isomorphic sibling:

\begin{center}
\includegraphics[width=4in]{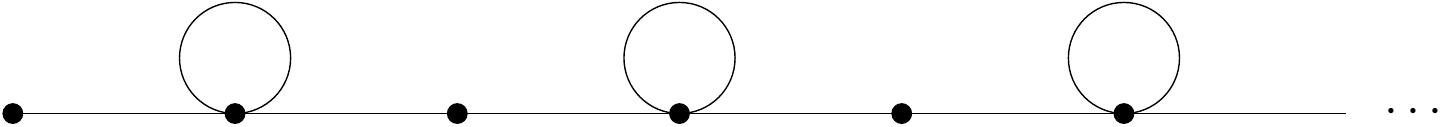}
\end{center}

This is also the case for connected posets, as we may simply consider
a one way infinite fence, which has two equimorphic siblings:

\begin{center}
\includegraphics[width=4in]{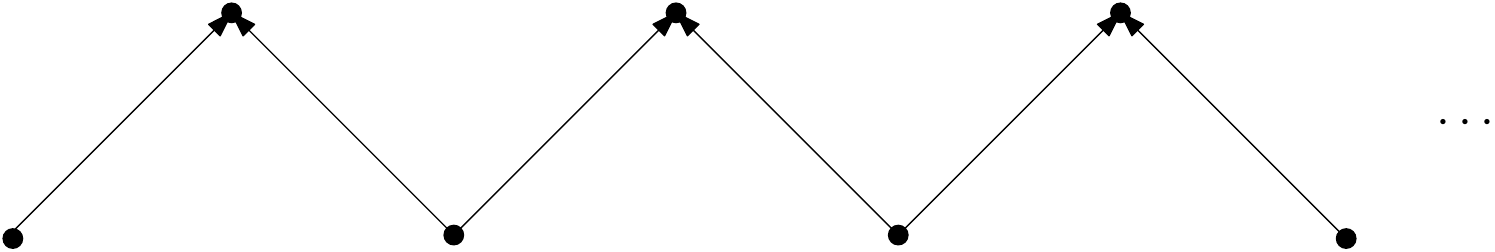}
\end{center}

Thus a complete understanding of the problem not only for trees and posets but
for the general case of a relational structure remains very
interesting.

\bigskip

The first author warmly thanks the Logic group and their staff at the
\emph{Institut Camille Jordan} of Universit\'e Lyon I for their
wonderful hospitality during the final preparation of this work.  The
second author thanks the Department of Mathematics \& Statistics of
the University of Calgary where this research started in the summer of
2012 and for the stimulating atmosphere and support. All three authors
thank the referee for valuable comments.

\end{document}